\newcommand{\email}[1]{\href{mailto:#1}{#1}}
\definecolor{darkbrown}{HTML}{996633}
\newcommand{\logLogSlopeTriangle}[5]
{

    \pgfplotsextra
    {
        \pgfkeysgetvalue{/pgfplots/xmin}{\xmin}
        \pgfkeysgetvalue{/pgfplots/xmax}{\xmax}
        \pgfkeysgetvalue{/pgfplots/ymin}{\ymin}
        \pgfkeysgetvalue{/pgfplots/ymax}{\ymax}

        \pgfmathsetmacro{\xArel}{#1}
        \pgfmathsetmacro{\yArel}{#3}
        \pgfmathsetmacro{\xBrel}{#1-#2}
        \pgfmathsetmacro{\yBrel}{\yArel}
        \pgfmathsetmacro{\xCrel}{\xArel}

        \pgfmathsetmacro{\lnxB}{\xmin*(1-(#1-#2))+\xmax*(#1-#2)} 
        \pgfmathsetmacro{\lnxA}{\xmin*(1-#1)+\xmax*#1} 
        \pgfmathsetmacro{\lnyA}{\ymin*(1-#3)+\ymax*#3} 
        \pgfmathsetmacro{\lnyC}{\lnyA+#4*(\lnxA-\lnxB)}
        \pgfmathsetmacro{\yCrel}{\lnyC-\ymin)/(\ymax-\ymin)} 

        \coordinate (A) at (rel axis cs:\xArel,\yArel);
        \coordinate (B) at (rel axis cs:\xBrel,\yBrel);
        \coordinate (C) at (rel axis cs:\xCrel,\yCrel);

        \draw[black]   (A)-- node[pos=0.5,anchor=north] {\scriptsize{1}}
                    (B)-- 
                    (C)-- node[pos=0.,anchor=west] {\scriptsize{\color{#5}#4}} 
                    (A);
    }
}
\newtheorem{theorem}{Theorem}
\newtheorem{proposition}[theorem]{Proposition}
\newtheorem{lemma}[theorem]{Lemma}
\theoremstyle{remark}
\newtheorem{remark}[theorem]{Remark}
\theoremstyle{definition}
\newtheorem{assumption}[theorem]{Assumption}
\newtheorem{definition}[theorem]{Definition}
\DeclareRobustCommand{\bvec}[1]{\boldsymbol{#1}}
  \renewcommand{\bvec}[1]{#1}%
\newcommand{\ul}[1]{\underline{{#1}}}
\newcommand{\uvec}[1]{\underline{\bvec{#1}}}
\newcommand{\norm}[2]{\Vert #2\Vert_{#1}}
\newcommand{\seminorm}[2]{\vert #2\vert_{#1}}
\newcommand{\Real}{\mathbb{R}}
\newcommand{\Natural}{\mathbb{N}}
\newcommand{\Zintegers}{\mathbb{Z}}
\newcommand{\st}{\,:\,}
\DeclareMathOperator{\tr}{tr}
\newcommand{\DIFF}{\mathrm{d}}
\newcommand{\KOSZUL}{\kappa}
\newcommand{\JAC}{\mathrm{D}}
\newcommand{\DET}{\mathrm{det}}
\newcommand{\INJ}{\mathfrak{I}}
\newcommand{\TRF}{\mathcal{T}}
\newcommand{\Lie}{\mathcal{L}}
\DeclareMathOperator{\GRAD}{\bf grad}
\DeclareMathOperator{\CURL}{\bf curl}
\DeclareMathOperator{\DIV}{div}
\DeclareMathOperator{\ROT}{rot}
\newcommand{\trimmed}{{-}}
\newcommand{\PL}[2]{\mathcal{P}_{#1}\Lambda^{#2}}
\newcommand{\HL}[2]{\mathcal{H}_{#1}\Lambda^{#2}}
\newcommand{\PLtrimmed}[2]{\mathcal{P}_{#1}^{\trimmed}\Lambda^{#2}}
\newcommand{\lproj}[2]{\pi^{#1}_{r,#2}}
\newcommand{\ltproj}[2]{\pi^{\trimmed,#1}_{r,#2}}
\newcommand{\Mh}{\mathcal{M}_h}
\newcommand{\FM}[1]{\Delta_{#1}}
\newcommand{\pf}{{\partial f}}
\newcommand{\uH}[2]{\uvec{X}_{r,#2}^{#1}}
\newcommand{\uI}[2]{\uvec{I}_{r,#2}^{#1}}
\newcommand{\vol}{\mathrm{vol}}
\newcommand{\spvol}{\widetilde{\vol}}
\newcommand{\spstar}{\widetilde{\star}}
\newcommand{\spdiff}{\widetilde{\DIFF}}
\newcommand{\spdelta}{\widetilde{\delta}}
\newcommand{\spsharp}{{\widetilde{\sharp}}}
\newcommand{\dt}{\DIFF t}
\newcommand{\dx}[1]{\DIFF x^{#1}}
\newcommand{\spE}{\widetilde{E}}
\newcommand{\spB}{\widetilde{B}}
\newcommand{\contr}[1]{\mathsf{i}_{#1}}
\begin{document}

\title{A polytopal discrete de Rham complex on manifolds, with application to the Maxwell equations}
\author[1,2]{J\'er\^ome Droniou\footnote{\email{jerome.droniou@umontpellier.fr}}}
\author[1,3]{Marien Hanot\footnote{\email{marien-lorenzo.hanot@univ-lille.fr}}}
\author[2]{Todd Oliynyk\footnote{\email{todd.oliynyk@monash.edu}}}
\affil[1]{IMAG, Univ. Montpellier, CNRS, Montpellier, France}
\affil[2]{School of Mathematics, Monash University, Australia}
\affil[3]{Univ. Lille, UMR 8524 - Laboratoire Paul Painlevé, CNRS, Inria, Lille, France}

\maketitle

\begin{abstract}
We design in this work a discrete de Rham complex on manifolds. This complex, written in the framework of exterior calculus, has the same cohomology as the continuous de Rham complex, is of arbitrary order of accuracy and, in principle, can be designed on meshes made of generic elements (that is, elements whose boundary is the union of an arbitrary number of curved edges/faces). Notions of local (full and trimmed) polynomial spaces are developed, with compatibility requirements between polynomials on mesh entities of various dimensions. We give explicit constructions of such polynomials in 2D, for some meshes made of curved triangles or quadrangles (such meshes are easy to design in many cases, starting from a few charts describing the manifold). The discrete de Rham complex is then used to set up a scheme for the Maxwell equations on a 2D manifold without boundary, and we show that a natural discrete version of the constraint linking the electric field and the electric charge density is satisfied. Numerical examples are provided on the sphere and the torus, based on bespoke analytical solutions and meshes on each of these manifolds.
  \medskip\\
  \textbf{Key words.} Discrete de Rham complex, manifolds, exterior calculus, Maxwell equations, numerical simulations
  \medskip\\
  \textbf{MSC2020.} 65M50, 65M60, 35Q61, 78M10.

\end{abstract}



\section{Introduction}

The goal of this paper is to design a discrete version of the de Rham complex on manifolds, which can be built on meshes made of generic elements and has the same cohomology as the continuous de Rham complex. This complex can be used to design schemes for partial differential equations (PDEs) which inherit some important stability properties from the continuous models (such as energy bounds or constraint preservation).

Let $\Omega$ be an $n$-dimensional compact Riemannian manifold with or without boundary. In the exterior calculus framework, the continuous de Rham complex on $\Omega$ is
\begin{equation}\label{eq:dR}
  \begin{tikzcd}
   \cdots  \arrow{r}{\DIFF}& H\Lambda^k(\Omega)\arrow{r}{\DIFF} & H\Lambda^{k+1}(\Omega) \arrow{r}{\DIFF}& \cdots,
  \end{tikzcd}
\end{equation}
where $\DIFF$ is the exterior derivative and $H\Lambda^k(\Omega)$ is the space of $L^2$-integrable $k$-forms whose exterior derivative is also $L^2$-integrable. For $\Omega$ a bounded domain of $\Real^n$ with $n=2,3$, through vector proxies this complex is equivalent to the usual $\GRAD$--$\ROT$ complex (in 2D) or $\GRAD$--$\CURL$--$\DIV$ complex (in 3D). The complex property, which reads $\DIFF\circ\DIFF=0$, has important consequences for PDE models based on these operators, such as the preservation of the divergence constraint in Maxwell equations or the stability of magnetostatics problems \cite[Section 2]{Di-Pietro.Droniou.ea:20}. Designing discrete versions of this complex is essential to ensure that the resulting scheme also satisfy these properties.

Several approaches have been employed to achieve such a discretisation. One of the most successful one is the Finite Element Exterior Calculus (FEEC), which provides a generic framework for constructing and analysing discrete complexes \cite{Arnold.Falk.ea:06,Arnold:18}. However, its design -- based on explicit locally polynomial and globally conforming functions -- makes it only applicable to certain meshes, mostly made of triangles/tetrahedra or quadrangles/hexahedra. We also note the broader framework of Generalised Finite Element Systems \cite{Christiansen.Hu:18}, but whose analysis tools are currently limited to algebraic properties of the discrete complexes, and does not provide explicit means to construct fully computable complexes on generic meshes. 

The application of FEEC to computational electromagnetism on domains in $\mathbb{R}^3$ was pioneered by A. Bossavit \cite{Bossavit:88,Bossavit:88a,Bossavit:88b}. Subsequent developments are discussed in \cite{Hiptmair:02,Hiptmair:15}, along with references therein. Despite significant progress, to our knowledge the application of FEEC techniques in electromagnetism has so far remained restricted to flat domains in $\mathbb{R}^3$.

A recent new trend is that of polytopal methods, that is, numerical methods that are applicable on meshes made of generic polygons/polyhedra (polytopes), and have an arbitrary degree of accuracy. The Virtual Element Method (VEM) \cite{Beirao-da-Veiga.Brezzi.ea:16,Beirao-da-Veiga.Brezzi.ea:18*2} and Discrete De Rham (DDR) \cite{Di-Pietro.Droniou.ea:20,Di-Pietro.Droniou:23} are two main examples of approaches to building \emph{polytopal discrete complexes} (these two approaches can actually be bridged, see \cite{Beirao.Dassi.ea:22}). Such complexes have benefits over FEEC ones: the flexibility of polytopal meshes leads to seamless local mesh refinement (to better capture, e.g., stiff solutions or complex geometries), and their high-level approach (which does not require globally conforming basis functions) may lead to leaner constructions using systematic processes in their design, such as static condensation and serendipity; see \cite{Dassi.Lovadina.ea:20,Beirao-da-Veiga.Mascotto:23,Botti.Di-Pietro.ea:23,Di-Pietro.Droniou:23*2,Di-Pietro.Droniou:21*1} for examples of the efficiency of discrete polytopal complexes.

The question of stability of discretisations of PDEs also applies to models set on manifolds, and leads to the natural question of designing discrete versions of the de Rham complex on such spaces. 
In the finite element setting, most studies have been done on embedded manifolds -- typically, a surface in $\Real^3$. An initial (extrinsic) approach is to approximate the surface by a piecewise linear surface, 
on which a triangulations and finite element spaces can be easily constructed, 
and to numerically approximate the PDE on that approximated surface \cite{Demlow.Dziuk:07,Demlow:09} (see also \cite{Arnold:18} for FEEC and \cite{Frittelli.Madzvamuse.ea:23} for VEM). 
The drawback of this simple -- although completely computable -- approach is its practical limitation to low-order methods because the error committed by approximating the surface limits the accuracy of the resulting scheme 
(unless we have an explicit knowledge -- which is rare -- of the distance function in a tubular neighbourhood of the manifold). 
To mitigate this issue, a second (intrinsic) approach consists in defining triangulations and finite element spaces directly on the manifold, 
and in therefore discretising the PDE without the geometrical error created by approximating the manifold, see \cite{Bachini.Farthing.ea:21} and reference therein. 
The challenge is then to define a notion of piecewise polynomial functions on the triangulation of the manifold. 
One option is through the usage of an explicit piecewise linear manifold, 
on which finite element triangulations and spaces can be trivially defined (the notion of polynomials being obvious), 
and to transport these triangulations and spaces onto the physical manifold. 
In \cite{Bachini.Farthing.ea:21}, it is required that this approximate manifold be contained on an open set close enough to the surface to ensure the
uniqueness, on that open set, of the orthogonal projection on the manifold; 
this is actually not a requirement, as the mere existence of an homeomorphism between the physical and piecewise linear manifolds is sufficient to transport all finite element spaces 
($H^1$-conforming, but also $\mathbf{H}(\CURL)$- or $\mathbf{H}(\DIV)$-conforming) through pull-back \cite{Licht:23}. 
Another, strongly related, approach to construct intrinsic triangulations and finite element spaces is to assume the existence of a global explicit system of coordinates on the manifold; 
this system can then be used to represent the mesh and local polynomial spaces (and amounts to the above-mentioned homeomorphism and pull-back approach).
This, however, has some limitations on the type of manifold that can be considered (a sphere, e.g., cannot be covered by a single chart).
The reference \cite{Bachini.Manzini.ea:21} describes this approach in the context of VEM, 
and highlights that the usage of a single chart on a sphere generates a loss of accuracy due to the singularity of this chart at a point ; it is numerically shown that using two charts can mitigate this issue, but the design of the method does not account for the usage of multiple charts. 

In all cases, the existence of a global object to represent the manifold (piecewise linear manifold approximating and/or homeomorphic to the physical manifold) can be established, 
but the explicit description of this object remains elusive except in special cases, 
which mainly restricts global approaches to theoretical analysis, or requires to construct computable spaces that introduce an additional geometrical error, the control of which requiring additional properties on the global object -- e.g., 
that the homeomorphism is close to an isometry. 
See the discussion in \cite{Licht:23}, which designs and analyses a FEEC on manifolds.

The reason for the need of a global approach mostly lies in the requirement for finite element spaces to be subspaces of the continuous spaces (conforming approximation). 
This is also visible in the abstract construction of \cite[Section 3-4]{Licht:23} where, although the design of the polynomial spaces is done locally through pullbacks, 
the continuity requirements of the \emph{smooth triangulation} ultimately makes it non-computable (except for trivial case).

Polytopal methods do not have such a requirement. Instead, their discrete spaces can be interpreted as spaces of vectors of polynomial functions on mesh entities of various dimensions 
(elements, but also faces, edges, vertices); 
these polynomial functions are completely unrelated with each other: for example,
polynomials on the faces do not have to be the traces of polynomials in the elements;
see \cite{Beirao.Dassi.ea:22,Bonaldi.Di-Pietro.ea:23} for DDR and VEM. 
As a consequence, the restrictions -- even on flat spaces -- imposed on the geometry of the mesh elements by finite element methods are lifted, 
and there is no issue in gluing local constructions to form a global space (no global property on the space needs to be imposed). 
The trade-off is that notions of polynomials on faces and edges must be available. 
This is not in itself an issue even on manifolds, as local coordinates can be used for this, 
but when considering \emph{complexes}, some compatibility conditions are necessary; 
in particular, traces on faces of polynomials in elements must be polynomial on faces 
(which imposes a compatibility of the local coordinates used to define polynomials). 
Moreover, \emph{trimmed} local polynomial spaces play an important role in discrete complexes \cite{Arnold:18,Bonaldi.Di-Pietro.ea:23,Beirao-da-Veiga.Brezzi.ea:18*2}; 
when going to polytopal complexes on manifolds, such notions of spaces must therefore also be defined, 
which raises questions on intrinsic Koszul operators in local coordinates and their compatibility conditions to ensure that traces of trimmed polynomial spaces remain trimmed polynomial spaces of the same degree.

At this point, it is worthwhile noticing other approaches to solving partial differential equations on manifolds that have been developed, primarily by the General Relativity community. The oldest approach is the Regge calculus \cite{Gentle:2002,Regge:1961,Williams.Tuckey:1992}, which is a lattice based method for numerically solving the Einstein equations. Related to the Regge calculus is the smooth lattice method developed in \cite{Brewin:1998}. This method offer significant improvements over the Regge calculus and has been used to solve the Einstein equations on spacetime manifolds with compact spacial slices \cite{Brewin:2015,Brewin:2017}.
In a different direction, an approach to solving partial differential equations, in particular the Einstein equations, on spacetime manifolds that is based on local discrete coordinate patches was developed in \cite{Schnetter_et_al:2006}.
More recently, the authors of the articles \cite{Lindblom_et_al:2022,Lindblom.Szilagyi:2013,Zhang.Lindblom:2022} developed a multi-cube decomposition method for solving partial differential on compact manifolds and employed this method to construct numerical solutions of the Einstein constraint equations on various compact three-manifolds. Finally, we mention the article \cite{Frauendiener:2006}, which is most relevant to our results, where a discrete exterior calculus is employed to design a numerical scheme for solving the Einstein equations. This approach has been used to find solutions to the Einstein equations on spacetimes with compact spatial slices in \cite{Richter.Frauendiener:2010}. All these methods, however, are limited to meshes with elements having a specific shape (tetrahedral or cubic).

\medskip

In this work, we tackle the question of designing a discrete complex on generic (not necessarily embedded) manifolds, that supports meshes having generic element shapes. 
We define suitable notions of local (complete and trimmed) polynomial spaces on a generic mesh of $\Omega$, tackling the issue of compatibility of traces of polynomials. Like the abstract setting of \cite{Licht:23}, our construction relies on mappings between subsets of $\Real^d$ and the manifold.
However, our compatibility requirement between the mappings is much milder. This added flexibility enables the construction of computable mappings.
Our construction works even when several charts are used to represent the manifold, and does not require a global homeomorphism between the manifold and a piecewise linear manifold; 
it is also purely local, and we provide practical spaces on various polygonal elements. 
We then use these local polynomial spaces to design, 
following the construction in \cite{Bonaldi.Di-Pietro.ea:23}, a discrete version of the de Rham complex of differential forms on the manifold. Some key properties of this complex are stated, 
including the isomorphism of its cohomology with that of the de Rham complex, as well as its primal consistency (Theorem \ref{thm:primal.consistency}), which is one of the main novelties of this work as it does not follow as in the regular (flat) DDR complex -- in particular, it is not a consequence of an exact consistency on polynomial forms.
We derive a 2+1 formulation of the Maxwell equations on 2D manifolds, and use the DDR complex to design a scheme for these equations; 
the preservation of constraint follows from the properties of the discrete complex. 
The behaviour of the scheme is numerically illustrated on manufactured solutions on the sphere and the torus; to do so, we
design suitable meshes (based on two charts of each manifold) on which local polynomial spaces with compatibility conditions can be constructed.

\medskip 

The paper is organised as follows. In Section \ref{sec:setting}, we introduce the abstract setting required to define the DDR complex on a manifold; in particular, we identify a suitable notion of
polynomial spaces on a mesh of the manifold, that allows us to construct trimmed polynomial spaces and such that restrictions of polynomials on lower-dimensional mesh entities are also polynomials on these mesh entities. This abstract setting is used in Section \ref{sec:DDR} to build the discrete spaces and operators of the discrete de Rham complex of differential
forms on the manifold, following the design in \cite{Bonaldi.Di-Pietro.ea:23}. We state the main properties (consistency, commutation, isomorphism of cohomologies) of this DDR complex. In Section \ref{sec:construction.polynomial} shows how to explicitly construct suitable polynomial spaces,
satisfying the setting of Section \ref{sec:setting}, provided that local charts with a compatibility condition (between charts of a face and charts of its sub-faces) can be found.
Section \ref{sec:application} then tackles an application of the discrete de Rham complex: the electromagnetism equations on a manifold. We recall the 2+1 formulation of the Maxwell equations in differential forms, and build a scheme for this model. The properties of the DDR complex show that this scheme preserves a discrete version of the constraint on the electric field as well as the energy of the model. We finally perform numerical tests on the 2-dimensional sphere and torus, illustrating the practical behaviour and accuracy of the scheme. Three appendices conclude the paper: Appendix \ref{sec:example.construction} details a practical construction of polynomial spaces on a 2D manifold mesh made of curved 3- or 4-edge elements, Appendix \ref{sec:tensor.calculus} recalls important tensor calculus constructions and derives the 2+1 formulation of the elecromagnetism equations, and Appendix \ref{sec:exact.solutions} presents the analytical solutions used in the numerical simulations.


\section{Setting}\label{sec:setting}
\subsection{Mesh}

We let $\Mh$ be a mesh made of a finite collection of submanifolds of dimensions $d=0,\ldots,n$ that partition $\Omega$. For such a $d$, the set $\Delta_d(\Mh)$ collects all the submanifolds in $\Mh$ of dimension $d$, which we also call $d$-cells, or cells when the dimension is not useful; ``elements'' refer to $n$-cells.
 
We assume the following properties (the first two explicitly re-state that $\Mh$ partitions $\Omega$):
\begin{itemize}
\item The submanifolds cover $\Omega$: $\bigcup_{d\in [0,n]}\bigcup_{f\in\FM{d}(\Mh)} f = \Omega$.
\item The submanifolds are pairwise disjoint: 
  $\forall f, f' \in \Mh, f \neq f' \implies f \cap f' = \emptyset$.
\item Boundaries of the submanifolds are submanifolds:
  $\forall d \in [1,n]$, $\forall f \in \FM{d}(\Mh)$, $\forall d' \in [0,d-1]$, $\forall f' \in \FM{d'}(\Mh)$, $f' \cap \overline{f} \neq \emptyset \implies f' \subset \pf$.
\end{itemize}
The last property allows us to define the collection of cells on the boundary of $f$:
$\FM{d'}(f) := \lbrace f' \in \FM{d'}(\Mh) \st f' \subset \pf \rbrace$.

Algebraic properties of the discrete de Rham complex designed in Section \ref{sec:DDR} do not require any other assumption on the mesh. 
However, analytical properties (such as the consistency of the complex, see Section \ref{sec:consistency}) do depend on the regularity of the mesh, a concept that we introduce now. 
In what follows, the notation $a\lesssim b$ stands for $a\le Cb$ with a constant $C$ that does not depend on 
the parameter $h$, the dimension $d\le n$, on the subcell $f\in\Delta_d(\Mh)$ or on the forms considered on $f$. 
The notation $a\approx b$ stands for $a\lesssim b$ and $b\lesssim a$.

\begin{definition}[Equivalence of meshes] \label{def:equiv.meshes}
We say that a (flat) polytopal mesh $M_h$ (as defined in \cite[Definition 1.4]{Di-Pietro.Droniou:20})
  is equivalent to $\Mh$ if:
  \begin{itemize}
    \item Their same-dimensional cells can be put in bijection: 
      $\forall d\in [0, n]$, there is a bijection $\Phi_d\st\FM{d}(\Mh)\to\FM{d}(M_h)$.
    \item They are topologically equivalent: 
      $\forall d\in [1,n]$, $\forall f\in\FM{d}(\Mh)$, $\Phi_{d-1}(\FM{d-1}(f)) = \FM{d-1}(\Phi_{d}(f))$.
    \item Their geometries are equivalent: 
      $\forall d\in [1, n]$, $\forall f\in\FM{d}(\Mh)$, there is a diffeomorphism 
      $\phi_f \st f\to\Phi_d(f)$ satisfying:
      \begin{equation}\label{eq:shape.reg}
        \norm{\infty}{\nabla \phi_f} \approx \norm{\infty}{\det(\nabla\phi_f)}^{\frac{1}{d}}
        \approx \norm{\infty}{\det(\nabla\phi_f^{-1})}^{-\frac{1}{d}} \approx\norm{\infty}{\nabla\phi_f^{-1}}^{-1}.
      \end{equation}
    \item The geometry is regular on boundaries: 
      $\forall d\in[2 , n]$, $\forall f\in\FM{d}(\Mh)$, $\forall f'\in\FM{d-1}(f)$,
      \begin{equation}\label{eq:boundary.reg}
        \norm{\infty}{\nabla\phi_f} \approx \norm{\infty}{\nabla\phi_{f'}}.
      \end{equation}
  \end{itemize}
  \end{definition}
  
  Above, $\norm{\infty}{F}$ denotes the $L^\infty$-norm of the function $F$ on its domain. Note that imposing, for example, that $\norm{\infty}{\nabla \phi_f}$ is comparable to $\norm{\infty}{\det(\nabla\phi_f)}^{\frac{1}{d}}$ amounts to enforcing a level of ``isotropy'' to $f$, and \eqref{eq:shape.reg} is therefore an assumption on the shape regularity of $f$.
    
  \begin{assumption}[Regular mesh sequence]\label{assum:reg.seq}
  We assume that the mesh sequence $(\Mh)_{h}$ is regular in the sense that there exists an equivalent (in the sense of Definition \ref{def:equiv.meshes}) polytopal mesh sequence $(M_h)_h$ which is regular in the sense of \cite[Definition 1.9]{Di-Pietro.Droniou:20}.
  \end{assumption}

  For $d\ge 1$, we define the characteristic size of a $d$-cell $f$ as 
  $h_f := \vert f\vert^{\frac{1}{d}}$. 
  Noticing that
  \begin{equation}\label{eq:hf.equiv}
    h_f \approx \norm{\infty}{\nabla\phi_f}^{-1} h_{\Phi_d(f)},
  \end{equation}
  we can translate the geometric bounds on regular polytopal mesh sequences into our setting. 
  We recall the main properties:
  \begin{itemize}
    \item The size of neighbouring cells is comparable: 
      $\forall d\in [2,n]$, $\forall f\in\FM{d}(\Mh)$, $\forall f'\in\FM{d-1}(f)$, $h_f \approx h_{f'}$. 
    \item The number of sub-cells in an cell boundary is bounded: 
      $\forall d\in [1,n]$, $\forall f\in\FM{d}(\Mh)$, $\mathrm{Card}(\FM{d-1}(f)) \lesssim 1$. 
  \end{itemize}
  
\subsection{Discrete polynomial spaces} \label{sec:discrete_spaces}

In the rest of the paper, we use a range of concepts related to differential forms and exterior calculus. We recall in Appendix \ref{sec:tensor.calculus} some of these concepts, and we refer the interested reader to \cite[Appendix A]{Bonaldi.Di-Pietro.ea:23} for a longer presentation in the case of Euclidean spaces. Wherever relevant, we will highlight specific considerations for manifolds.

To build a DDR complex on $\Mh$, we need a notion of ``local exterior calculus polynomial space'' on the submanifolds in $\Mh$. 
This is not an easy concept to define as these spaces must satisfy a range of compatibility properties with the exterior derivative, the trace, etc. 
In this section,  we define the key axioms on these spaces, and infer further notions such as trimmed polynomial spaces.
The construction below mimics well-known properties of polynomial spaces, 
exterior derivative and Koszul operator on flat spaces; 
we however have to check carefully that the minimal properties we require 
(the complex properties and Assumption \ref{assumption:homogeneous}) 
are indeed sufficient to deduce, in particular, 
the decomposition of polynomial spaces which justifies the definition of trimmed spaces.

Let $0 \leq d \leq n$ and $f \in \FM{d}(\Mh)$. We consider spaces $(\PL{r}{l}(f))_{r\in \Zintegers,l\in\Zintegers}$ such that $\PL{r}{l}(f)\subset C^1\Lambda^l(f)$, and that form a complex for the exterior derivative $\DIFF$ in the following way:
\begin{equation}\label{eq:polynomial.complex.diff}
  \begin{tikzcd}
   \cdots  \arrow{r}{\DIFF}& \PL{r}{l}(f) \arrow{r}{\DIFF} & \PL{r-1}{l+1}(f) \arrow{r}{\DIFF}& \cdots.
  \end{tikzcd}
\end{equation}
We assume the existence of a graded map $\KOSZUL_f$ (removing the index $f$ when no confusion can arise), 
playing the role of a Koszul operator in our setting, 
such that the following ``reverse'' sequence forms a complex
\begin{equation}\label{eq:polynomial.complex.koszul}
  \begin{tikzcd}
   \cdots  \arrow{r}{\KOSZUL}& \PL{r}{l}(f) \arrow{r}{\KOSZUL} & \PL{r+1}{l-1}(f) \arrow{r}{\KOSZUL}& \cdots.
  \end{tikzcd}
\end{equation}

\begin{assumption}[Homogeneous polynomials] \label{assumption:homogeneous}
We assume the following properties, for all $0\le d\le n$ and $f\in\Delta_d(\Mh)$:
\begin{enumerate}[label=\textbf{(\ref{assumption:homogeneous}.{\alph*})}]
  \item\label{A:homogeneous.polynomials} There exists spaces $(\HL{s}{l}(f))_{s\in\Zintegers,l\in\Zintegers}$ that form a graded decomposition $\PL{r}{l}(f) = \bigoplus_{s \leq r} \HL{s}{l}(f)$ 
    such that, on each $\HL{s}{l}(f)$, $\DIFF \KOSZUL + \KOSZUL \DIFF$ acts as an homothetie:
    $\forall s,l\in\Zintegers$, $\exists \lambda_{s,l}\in\Real$ such that $(\DIFF\KOSZUL + \KOSZUL\DIFF)p = \lambda_{s,l} p$ for all $p \in \HL{s}{l}(f)$.
    For $s<0$, $\HL{s}{l}(f)=\{0\}$ and we fix $\lambda_{s,l}=\lambda_{s+l,0}$, with $\lambda_{s+l,0}=0$ if $s+l<0$.
  \item\label{A:eigenvalues} The eigenvalues of $\DIFF\KOSZUL+\KOSZUL\DIFF$ identify the polynomial degree: 
    $\forall d > 0$, $\forall l\in [0,d]$, $\forall s,s'\ge 0$, $\lambda_{s,l} = \lambda_{s',l} \implies s = s'$.
    Moreover, if $s\ge 0$ then $\lambda_{s,l} = 0$ if and only if $(s,l) = (0,0)$.
  \item\label{A:d.non.zero} $\DIFF \HL{s}{l}(f) \neq \lbrace 0 \rbrace$ when $s > 0$ and $l < d$.
\end{enumerate}
\end{assumption}

\begin{lemma} \label{lemma:homogeneous.mapping}
  Under Assumption \ref{assumption:homogeneous}, for all $0 \le l \leq d$, $s\in\Zintegers$ and $f\in\Delta_d(\Mh)$, the eigenvalues are related by the relation $\lambda_{s,l} = \lambda_{s+l,0}$.
  Moreover $\DIFF\HL{s+1}{l}(f)\subset \HL{s}{l+1}(f)$ and $\KOSZUL\HL{s}{l+1}(f)\subset \HL{s+1}{l}(f)$.
\end{lemma}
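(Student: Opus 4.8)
The plan is to study the homotopy operator $L := \DIFF\KOSZUL + \KOSZUL\DIFF$, whose eigenvalues on the homogeneous spaces are exactly the $\lambda_{s,l}$, and to show that $\DIFF$ and $\KOSZUL$ move \emph{within} a fixed eigenvalue of $L$. First I would record the two commutation identities $L\DIFF = \DIFF L$ and $L\KOSZUL = \KOSZUL L$. These follow at once from the complex properties $\DIFF\circ\DIFF = 0$ (from \eqref{eq:polynomial.complex.diff}) and $\KOSZUL\circ\KOSZUL = 0$ (from \eqref{eq:polynomial.complex.koszul}); for instance $L\DIFF = \DIFF\KOSZUL\DIFF = \DIFF L$. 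Consequently, for $p\in\HL{s}{l}(f)$ the form $\DIFF p$ is an $L$-eigenvector for the eigenvalue $\lambda_{s,l}$, since $L(\DIFF p)=\DIFF(Lp)=\lambda_{s,l}\DIFF p$, and likewise for $\KOSZUL p$.

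Next I would use the graded decomposition $\PL{r}{l}(f) = \bigoplus_{0\le s\le r}\HL{s}{l}(f)$ (recall $\HL{s}{l}(f)=\{0\}$ for $s<0$) together with the injectivity of $s\mapsto\lambda_{s,l}$ on $\{s\ge 0\}$ furnished by Assumption \ref{A:eigenvalues}. Since $L$ acts as $\lambda_{s,l}$ on $\HL{s}{l}(f)$, any $L$-eigenvector decomposes into homogeneous components sharing its eigenvalue; by injectivity there is at most one such component, so for $s\ge 0$ the $\lambda_{s,l}$-eigenspace of $L$ inside any $\PL{r}{l}(f)$ with $r\ge s$ equals $\HL{s}{l}(f)$. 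Because $\DIFF p\in\PL{s-1}{l+1}(f)$ is a $\lambda_{s,l}$-eigenvector, this forces $\DIFF\HL{s}{l}(f)$ to be contained in a single component $\HL{s_0}{l+1}(f)$ (or to be $\{0\}$), with $s_0$ the unique index in $[0,s-1]$ satisfying $\lambda_{s_0,l+1}=\lambda_{s,l}$; the analogous single-component statement holds for $\KOSZUL$.

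I would then prove $\lambda_{s,l}=\lambda_{s+l,0}$ by induction on $l$, the case $l=0$ being trivial. For the step from $l$ to $l+1$ (with $l<d$), Assumption \ref{A:d.non.zero} guarantees $\DIFF\HL{s}{l}(f)\ne\{0\}$ for every $s\ge 1$, so the index $s_0(s)\in[0,s-1]$ above is well-defined and satisfies $\lambda_{s_0(s),l+1}=\lambda_{s,l}$. The map $s\mapsto s_0(s)$ is injective on $\{s\ge 1\}$: if $s_0(s)=s_0(s')$ then $\lambda_{s,l}=\lambda_{s_0(s),l+1}=\lambda_{s_0(s'),l+1}=\lambda_{s',l}$, whence $s=s'$ by Assumption \ref{A:eigenvalues}. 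An injective map $f\st\{1,2,\dots\}\to\{0,1,2,\dots\}$ with $f(s)\le s-1$ is forced, by a one-line induction ($f(1)=0$ and then $f(s)\notin\{0,\dots,s-2\}$), to be $f(s)=s-1$; hence $s_0(s)=s-1$ and $\lambda_{s-1,l+1}=\lambda_{s,l}=\lambda_{s+l,0}$, the last equality being the induction hypothesis. Reindexing yields $\lambda_{t,l+1}=\lambda_{t+(l+1),0}$ for all $t\ge 0$, while the relation for $t<0$ holds by the convention fixed in Assumption \ref{A:homogeneous.polynomials}.

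Finally, with the identity established, the two inclusions follow by revisiting the single-component argument: for $p\in\HL{s+1}{l}(f)$ the form $\DIFF p$ is a $\lambda_{s+1,l}$-eigenvector, and since $\lambda_{s+1,l}=\lambda_{s+1+l,0}=\lambda_{s,l+1}$ it must lie in $\HL{s}{l+1}(f)$; symmetrically $\KOSZUL\HL{s}{l+1}(f)\subset\HL{s+1}{l}(f)$ via $\lambda_{s,l+1}=\lambda_{s+1,l}$ (the cases $l=d$ being trivial as $\Lambda^{l+1}$ then vanishes). The main obstacle is precisely the inductive step for the eigenvalue identity: a priori $\DIFF$ might drop the homogeneity index by more than one, and the whole point is to exclude this. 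This is where Assumption \ref{A:d.non.zero} is indispensable — it makes $s\mapsto s_0(s)$ a \emph{total} injective map bounded by $s-1$, which the counting argument then pins down to $s_0(s)=s-1$.
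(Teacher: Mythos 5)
Your argument is correct, and it shares the paper's core mechanism: both exploit the commutation of $L=\DIFF\KOSZUL+\KOSZUL\DIFF$ with $\DIFF$ and $\KOSZUL$ (a consequence of $\DIFF\circ\DIFF=0$ and $\KOSZUL\circ\KOSZUL=0$) to see that images of homogeneous forms are $L$-eigenvectors with unchanged eigenvalue, then use Assumption \ref{A:eigenvalues} to locate such an eigenvector in a single homogeneous component, with Assumption \ref{A:d.non.zero} guaranteeing non-triviality. Where you genuinely differ is in how the target component is pinned down. The paper runs a nested induction: first on $s$ for the transition from $l=0$ to $l=1$, where the induction hypothesis $\lambda_{k,1}=\lambda_{k+1,0}$ for $k<s$ is precisely what rules out $\DIFF p$ landing in a component of index $k<s$, and then ``another induction on $l$''. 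You instead run a single induction on $l$ and replace the inner induction on $s$ by a combinatorial lemma: the landing-index map $s\mapsto s_0(s)$ is totally defined on $s\ge 1$ (by Assumption \ref{A:d.non.zero}), injective (by Assumption \ref{A:eigenvalues}), and bounded above by $s-1$, hence forced to equal $s-1$ by a pigeonhole count. Your route cleanly separates the combinatorics from the eigenvalue bookkeeping and makes explicit that Assumption \ref{A:d.non.zero} is exactly what forbids $\DIFF$ from dropping the homogeneity index by more than one; the paper's route is slightly more economical, since the inclusion $\DIFF\HL{s+1}{0}(f)\subset\HL{s}{1}(f)$ and the eigenvalue identity are obtained simultaneously within one induction. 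You also spell out the $\KOSZUL$ inclusion (via $\lambda_{s,l+1}=\lambda_{s+1,l}$ and the same eigenvector argument), a step the paper leaves to the reader.
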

\begin{proof}
  The case $s<0$ being obvious by the choice in Assumption \ref{A:homogeneous.polynomials}, we only need to consider $s\ge 0$.
  Let us first take $l=0$ and let us prove by induction on $s\in \Natural$ that 
  $\DIFF\HL{s+1}{0}(f)\subset \HL{s}{1}(f)$ and $\lambda_{s,1}=\lambda_{s+1,0}$.
  Starting with $s=0$, Assumption \ref{A:homogeneous.polynomials} gives $\DIFF\HL{1}{0}(f) \subset \DIFF\PL{1}{0}(f) \subset \PL{0}{1}(f) = \HL{0}{1}(f)$, which proves the first property for the base case. For the second property,
  by Assumption \ref{A:d.non.zero} there is $p \in \HL{1}{0}(f)$ such that $\DIFF p \neq 0$.
  Moreover $\KOSZUL\DIFF p = \lambda_{1,0} p - \DIFF\KOSZUL p$.
  Thus,
  \[
    (\DIFF\KOSZUL+\KOSZUL\DIFF)\DIFF p 
    = \DIFF \KOSZUL \DIFF p
    = \lambda_{1,0} \DIFF p - \cancel{\DIFF \DIFF \KOSZUL \DIFF p},
  \]
  and $\DIFF p$ is an eigenvector of $\DIFF\KOSZUL+\KOSZUL\DIFF$ associated with the eigenvalue $\lambda_{1,0}$.
  We note that $\DIFF p \in \DIFF\HL{1}{0}(f)\subset \HL{0}{1}(f)$.
  Therefore, $\lambda_{0,1} = \lambda_{1,0}$ and the case $s=0$ is proved.
  
  Let now $s\ge 1$ and assume that, for all $0 \leq k < s$, $\DIFF \HL{k+1}{0}(f)\subset \HL{k}{1}(f)$ and $\lambda_{k,1} = \lambda_{k+1,0}$. We need to prove that $\lambda_{s,1}=\lambda_{s+1,0}$. 
  Let $p\in\HL{s+1}{0}(f)$. If $\DIFF p=0$ then clearly $\DIFF p\in\HL{s}{1}(f)$. Otherwise, using the same arguments as for $s=0$
  we note that $(\DIFF\KOSZUL+\KOSZUL\DIFF)\DIFF p = \lambda_{s+1,0} \DIFF p$, 
  and $\DIFF p \in \PL{s}{1}(f) = \bigoplus_{k \leq s} \HL{k}{1}(f)$.
  Since $\DIFF p$ is an eigenvector of $\DIFF\KOSZUL+\KOSZUL\DIFF$ (as it is not equal to $0$), by Assumption \ref{A:eigenvalues} it belongs to an $\HL{k}{1}(f)$ for some $k$ such that $\lambda_{k,1}=\lambda_{s+1,0}$.
  By induction hypothesis, if $k<s$ we have $\lambda_{k,1}=\lambda_{k+1,0}\not=\lambda_{s+1,0}$ (the non-equality coming from Assumption \ref{A:eigenvalues}). Hence, $k=s$ and $\lambda_{s,1}=\lambda_{s+1,0}$. In passing, we have also proved that $\DIFF p\in\HL{s}{1}(f)$, so that $\DIFF\HL{s+1}{0}(f)\subset\HL{s}{1}(f)$, and the induction is therefore complete.
  
  Another induction on $l$ concludes the proof that $\lambda_{s,l} = \lambda_{s+l,0}$ for all $s,l$.

  The fact that $\KOSZUL$ maps eigenspaces into eigenspaces also follows from Assumption \ref{A:homogeneous.polynomials} and similar arguments. The details are left to the reader.
\end{proof}

\begin{lemma} \label{lemma:homogeneous.iso}
  Under Assumption \ref{assumption:homogeneous}, for all $0 \le l \leq d$, $s\in\Zintegers$ and $f\in\Delta_d(\Mh)$, the following mappings are one-to-one:
  \begin{align*}
    \DIFF \st \KOSZUL\HL{s}{l}(f) \to&\, \DIFF\HL{s+1}{l-1}(f) \subset \HL{s}{l}(f)\\  
    \KOSZUL \st \DIFF\HL{s}{l}(f) \to&\, \KOSZUL\HL{s-1}{l+1}(f) \subset \HL{s}{l}(f)\,.
  \end{align*}
\end{lemma}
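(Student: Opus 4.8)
The plan is to exploit the homothety identity of Assumption \ref{assumption:homogeneous} together with the two complex properties $\DIFF\circ\DIFF=0$ and $\KOSZUL\circ\KOSZUL=0$, the whole argument resting on the eigenvalue bookkeeping already set up in Lemma \ref{lemma:homogeneous.mapping}. First I would dispose of the degenerate cases: if $s<0$ both $\HL{s}{l}(f)$ and, by Lemma \ref{lemma:homogeneous.mapping}, the announced target $\DIFF\HL{s+1}{l-1}(f)\subset\HL{s}{l}(f)$ are trivial; likewise $\KOSZUL\HL{s}{l}(f)=\{0\}$ when $l=0$ and $\DIFF\HL{s}{l}(f)=\{0\}$ when $(s,l)=(0,0)$, so I may assume $s+l>0$. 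Lemma \ref{lemma:homogeneous.mapping} also guarantees that both maps are well defined with the stated codomains, since $\KOSZUL\HL{s}{l}(f)\subset\HL{s+1}{l-1}(f)$, $\DIFF\HL{s}{l}(f)\subset\HL{s-1}{l+1}(f)$, and applying $\DIFF$ (resp. $\KOSZUL$) once more lands the images in $\HL{s}{l}(f)$.

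The crux is the observation that the homothety factor is constant along the relevant diagonal: by Lemma \ref{lemma:homogeneous.mapping}, $\lambda_{s+1,l-1}=\lambda_{(s+1)+(l-1),0}=\lambda_{s+l,0}=\lambda_{s,l}$, and symmetrically $\lambda_{s-1,l+1}=\lambda_{s+l,0}$; by Assumption \ref{A:eigenvalues} this common value $\lambda_{s+l,0}$ is nonzero precisely because $s+l>0$. For injectivity of $\DIFF$ on $\KOSZUL\HL{s}{l}(f)$, I would take $\eta=\KOSZUL\zeta$ with $\zeta\in\HL{s}{l}(f)$ and $\DIFF\eta=0$; then $\KOSZUL\eta=\KOSZUL\KOSZUL\zeta=0$, so evaluating the homothety on $\HL{s+1}{l-1}(f)$ gives $\lambda_{s+l,0}\,\eta=(\DIFF\KOSZUL+\KOSZUL\DIFF)\eta=0$, whence $\eta=0$. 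The symmetric computation, using $\KOSZUL\KOSZUL=0$ on $\eta=\DIFF\zeta\in\HL{s-1}{l+1}(f)$ and the homothety there, proves injectivity of $\KOSZUL$ on $\DIFF\HL{s}{l}(f)$.

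It then remains to check each map is onto its stated image. For the first map, given $\xi\in\HL{s+1}{l-1}(f)$ I would apply $\DIFF$ to the identity $\lambda_{s+l,0}\,\xi=\DIFF\KOSZUL\xi+\KOSZUL\DIFF\xi$ and use $\DIFF\DIFF=0$ to obtain $\DIFF\xi=\DIFF\KOSZUL\big(\lambda_{s+l,0}^{-1}\DIFF\xi\big)$; since $\DIFF\xi\in\HL{s}{l}(f)$, this exhibits $\DIFF\xi$ as the image under $\DIFF$ of an element of $\KOSZUL\HL{s}{l}(f)$, giving $\DIFF\HL{s+1}{l-1}(f)\subset\DIFF\KOSZUL\HL{s}{l}(f)$ and hence equality. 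Applying $\KOSZUL$ instead of $\DIFF$ to the same identity on $\HL{s-1}{l+1}(f)$, and using $\KOSZUL\KOSZUL=0$, yields the analogous surjectivity for the second map. I do not expect a genuine obstacle here: the only points needing care are the non-vanishing of $\lambda_{s+l,0}$ (which is exactly Assumption \ref{A:eigenvalues} once the eigenvalue has been identified via Lemma \ref{lemma:homogeneous.mapping}) and the low-degree edge cases, so the argument is essentially a transcription of the classical flat-space homotopy identity into this abstract eigenvalue framework.
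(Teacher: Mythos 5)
Your proof is correct and follows essentially the same route as the paper's: injectivity is obtained from the homothety identity of Assumption \ref{A:homogeneous.polynomials} combined with a complex property ($\KOSZUL\KOSZUL=0$, resp.\ $\DIFF\DIFF=0$) and the non-vanishing of the eigenvalue from Assumption \ref{A:eigenvalues}, the only cosmetic difference being that you evaluate the homothety on the element $\eta=\KOSZUL\zeta\in\HL{s+1}{l-1}(f)$ (which requires invoking Lemma \ref{lemma:homogeneous.mapping} to identify $\lambda_{s+1,l-1}=\lambda_{s+l,0}$), whereas the paper evaluates it on the preimage $\zeta\in\HL{s}{l}(f)$ and works directly with $\lambda_{s,l}$. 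Your additional surjectivity step (reading ``one-to-one'' as a bijection onto $\DIFF\HL{s+1}{l-1}(f)$, resp.\ $\KOSZUL\HL{s-1}{l+1}(f)$) is also correct but goes beyond the paper's proof, which only establishes injectivity together with the codomain inclusions; this is harmless extra work, since only injectivity is used downstream, e.g.\ in the proof of Lemma \ref{lemma:poly.dec}.
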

\begin{proof}
  We only need to prove that $\DIFF$ is injective on $\KOSZUL\HL{s}{l}(f)$, the other case being similar. 
  The case $l=0$ is trivial since 
  \begin{equation}\label{eq:kHL}
  \KOSZUL\HL{s}{0}(f)= \KOSZUL\PL{s}{0}(f)\subset \PL{s+1}{-1}(f)\subset C^1\Lambda^{-1}(f) = \lbrace 0\rbrace .
  \end{equation}
   We therefore assume that $l\ge 1$ and take
  $q\in \KOSZUL\HL{s}{l}(f)$ such that $\DIFF q=0$. We can write $q=\KOSZUL p$ for some $p\in \HL{s}{l}(f)$.
  By Assumption \ref{A:homogeneous.polynomials}, we have $(\DIFF\KOSZUL+\KOSZUL\DIFF)p = \lambda_{s,l} p$.
  As $\DIFF \KOSZUL p = \DIFF q=0$, we infer $\KOSZUL\DIFF p = \lambda_{s,l} p$ and $\lambda_{s,l} \KOSZUL p = \KOSZUL\KOSZUL\DIFF p = 0$.
  By Assumption \ref{A:eigenvalues}, $\lambda_{s,l} \neq 0$ (since $(s,l)\not=(0,0)$), and thus $q=\KOSZUL p= 0$.
  
  To conclude the proof, we note that the property $\DIFF\KOSZUL\HL{s}{l}(f)\subset \DIFF\HL{s+1}{l-1}(f)$ follows from Lemma \ref{lemma:homogeneous.mapping} if $l\ge 1$, and from \eqref{eq:kHL} if $l=0$.

\end{proof}

\begin{lemma}[Decomposition of polynomial spaces] \label{lemma:poly.dec}
  Under Assumption \ref{assumption:homogeneous}, for all $f\in\Delta_d(\Mh)$ and all $r\ge 0$ the following direct decompositions hold:
  \begin{align*}
    \PL{r}{0}(f) ={}& \PL{0}{0}(f) \oplus \KOSZUL \PL{r-1}{1}(f)\\
    \PL{r}{l}(f) ={}& \DIFF\PL{r+1}{l-1}(f) \oplus \KOSZUL \PL{r-1}{l+1}(f)\quad \mbox{if $l\ge 1$.}
  \end{align*}
\end{lemma}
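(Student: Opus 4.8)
The plan is to establish each decomposition first on the homogeneous components $\HL{s}{l}(f)$ and then to reassemble them via the graded decomposition $\PL{r}{l}(f) = \bigoplus_{s \le r}\HL{s}{l}(f)$ from Assumption \ref{A:homogeneous.polynomials}. Concretely, the statement I would prove at the homogeneous level is that, for $l \ge 1$ and $s \ge 0$,
\[
  \HL{s}{l}(f) = \DIFF\HL{s+1}{l-1}(f) \oplus \KOSZUL\HL{s-1}{l+1}(f),
\]
that for $l = 0$ and $s \ge 1$ one has $\HL{s}{0}(f) = \KOSZUL\HL{s-1}{1}(f)$, while $\HL{0}{0}(f) = \PL{0}{0}(f)$ is an exceptional ``harmonic'' piece that must be treated separately. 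Summing these identities over $0 \le s \le r$ then yields the two claimed decompositions.

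For the spanning inclusion, I would use that by Lemma \ref{lemma:homogeneous.mapping} together with Assumption \ref{A:eigenvalues} one has $\lambda_{s,l} \neq 0$ whenever $s,l \ge 0$ and $(s,l) \neq (0,0)$. Hence every $p \in \HL{s}{l}(f)$ can be written as
\[
  p = \tfrac{1}{\lambda_{s,l}}(\DIFF\KOSZUL + \KOSZUL\DIFF)p
    = \tfrac{1}{\lambda_{s,l}}\DIFF(\KOSZUL p) + \tfrac{1}{\lambda_{s,l}}\KOSZUL(\DIFF p),
\]
and the mapping properties of Lemma \ref{lemma:homogeneous.mapping} place $\KOSZUL p \in \HL{s+1}{l-1}(f)$ and $\DIFF p \in \HL{s-1}{l+1}(f)$, so the two summands lie in $\DIFF\HL{s+1}{l-1}(f)$ and $\KOSZUL\HL{s-1}{l+1}(f)$; the reverse inclusion is immediate from the same lemma. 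For $l = 0$ the first summand vanishes because $\KOSZUL p \in \KOSZUL\PL{s}{0}(f) \subset \{0\}$ by \eqref{eq:kHL}, leaving only the Koszul part. For the directness within a fixed homogeneous piece, I would take $x \in \DIFF\HL{s+1}{l-1}(f) \cap \KOSZUL\HL{s-1}{l+1}(f)$: since $x$ is $\DIFF$-exact we get $\DIFF x = 0$ from the complex property \eqref{eq:polynomial.complex.diff}, and since $x$ is $\KOSZUL$-exact we get $\KOSZUL x = 0$ from \eqref{eq:polynomial.complex.koszul}; therefore $\lambda_{s,l}x = (\DIFF\KOSZUL + \KOSZUL\DIFF)x = 0$, and $\lambda_{s,l} \neq 0$ forces $x = 0$.

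Finally I would sum over $0 \le s \le r$. The observation that makes the reassembly painless is that $\DIFF$ and $\KOSZUL$ each shift the degree index $s$ by one and, by Lemma \ref{lemma:homogeneous.mapping}, map into distinct homogeneous components $\HL{s}{l}(f)$; consequently the individual images recombine into images of whole polynomial spaces, $\bigoplus_{s}\DIFF\HL{s+1}{l-1}(f) = \DIFF\PL{r+1}{l-1}(f)$ and $\bigoplus_{s}\KOSZUL\HL{s-1}{l+1}(f) = \KOSZUL\PL{r-1}{l+1}(f)$, and directness is inherited for free from the graded decomposition. The step requiring the most care, and the main potential pitfall, is the index bookkeeping at the ends of the range: one must verify that $\DIFF\HL{0}{l-1}(f) \subset \HL{-1}{l}(f) = \{0\}$ and $\KOSZUL\HL{-1}{l+1}(f) = \{0\}$, so that the shifted sums reproduce exactly $\DIFF\PL{r+1}{l-1}(f)$ (including its top degree $r+1$) and $\KOSZUL\PL{r-1}{l+1}(f)$, and — in the case $l = 0$ — that the exceptional component $\HL{0}{0}(f) = \PL{0}{0}(f)$, on which $\lambda_{0,0} = 0$ and the homothety argument breaks down, is split off separately to produce $\PL{r}{0}(f) = \PL{0}{0}(f) \oplus \KOSZUL\PL{r-1}{1}(f)$.
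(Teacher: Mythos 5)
Your proof is correct and takes essentially the same route as the paper's: establish the homogeneous decomposition $\HL{s}{l}(f) = \DIFF\HL{s+1}{l-1}(f) \oplus \KOSZUL\HL{s-1}{l+1}(f)$ via the homothety identity, handle $\HL{0}{0}(f)$ separately, and sum over $s \le r$ using the triviality of the negative-degree spaces. The only cosmetic difference is the directness step, where you apply the homothety and the complex properties $\DIFF\DIFF = 0$, $\KOSZUL\KOSZUL = 0$ directly to the intersection element rather than invoking Lemma \ref{lemma:homogeneous.iso}, which the paper proves from exactly the same ingredients.
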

\begin{proof}
  We first prove that
  \begin{equation}\label{eq:homogeneous.direct.sum}
    \HL{s}{l}(f) = \DIFF\HL{s+1}{l-1}(f) \oplus \KOSZUL\HL{s-1}{l+1}(f)\quad\mbox{ if $(s,l)\not=(0,0)$.}
  \end{equation}
  For $p \in \HL{s}{l}(f)$, Assumption \ref{A:homogeneous.polynomials} yields $(\DIFF\KOSZUL+\KOSZUL\DIFF) p = \lambda_{s,l} p$ with $\lambda_{s,l} \neq 0$ since $(s,l) \neq (0,0)$. Hence,
  \[
    p=\DIFF(\lambda_{s,l}^{-1}\KOSZUL p) + \KOSZUL(\lambda_{s,l}^{-1}\DIFF p).
  \]
  Noticing that $\KOSZUL p \in \HL{s+1}{l-1}(f)$, $\DIFF p \in \HL{s-1}{l+1}(f)$, 
  $\DIFF \HL{s+1}{l-1}(f) \subset \HL{s}{l}(f)$ and $\KOSZUL \HL{s-1}{l+1}(f) \subset \HL{s}{l}(f)$ by Lemma \ref{lemma:homogeneous.mapping}, 
  this proves that $\HL{s}{l}(f) = \DIFF\HL{s+1}{l-1}(f) + \KOSZUL\HL{s-1}{l+1}(f)$. 
  It remains to prove that this sum is direct. If $z\in \DIFF \HL{s-1}{l+1}(f)\cap\KOSZUL \HL{s+1}{l-1}(f)$ 
  then there exists $p \in \HL{s-1}{l+1}(f)$ and $q \in \HL{s+1}{l-1}(f)$ such that $z=\KOSZUL p = \DIFF q $, 
  then $\DIFF \KOSZUL p = \DIFF\DIFF q = 0$.
  Lemma \ref{lemma:homogeneous.iso} then implies $z=\KOSZUL p = 0$, which concludes the proof of \eqref{eq:homogeneous.direct.sum}.

  We now turn to the result in the lemma. If $l\not=0$, we can take the direct sums of \eqref{eq:homogeneous.direct.sum} to get
  \begin{align*}
    \PL{r}{l}(f) 
    ={}& \bigoplus_{s \leq r} \HL{s}{l}(f)\\
    ={}& \bigoplus_{s \leq r} \DIFF\HL{s+1}{l-1}(f) \oplus \KOSZUL\HL{s-1}{l+1}(f)\\
    ={}& \DIFF \bigoplus_{s \leq r} \HL{s+1}{l-1}(f) \oplus \KOSZUL\bigoplus_{s \leq r}\HL{s-1}{l+1}(f)\\
    ={}& \DIFF\PL{r+1}{l-1}(f) \oplus \KOSZUL \PL{r-1}{l+1}(f).
  \end{align*}
  If $l=0$, we simply write 
  $$
  \PL{r}{0}(f) = \HL{0}{0}(f)\oplus \bigoplus_{1\le s \leq r} \HL{s}{0}(f)=\PL{0}{0}(f)\oplus \bigoplus_{1\le s \leq r} \HL{s}{0}(f)
  $$ 
  and apply \eqref{eq:homogeneous.direct.sum} with $s\in [1,r]$ and $l=0$, noting that $\DIFF\HL{s+1}{-1}(f)=\{0\}$.
\end{proof}

\begin{definition}[Trimmed polynomial spaces] \label{def:poly.trimmed}
Under Assumption \ref{assumption:homogeneous}, for all $f\in\Delta_d(\Mh)$ and all $r\ge 0$,
the trimmed polynomial spaces are defined as
\begin{align*}
  \PLtrimmed{r}{0}(f) :={}& \PL{r}{0}(f)\\
  \PLtrimmed{r}{l}(f) :={}& \DIFF \PL{r}{l-1}(f) \oplus \KOSZUL \PL{r-1}{l+1}(f)\quad\mbox{ if $l\ge 1$}.
\end{align*}
\end{definition}

\begin{remark} 
  It follows directly from the direct decomposition of Lemma \ref{lemma:poly.dec} that,
  for all $r, l, d \in \Zintegers$ and $f \in \FM{d}(\Mh)$,
  \[
    \PL{r}{l}(f) \subset \PLtrimmed{r+1}{l}(f) \subset \PL{r+1}{l}(f).
  \]
\end{remark}

\begin{assumption}[Traces of trimmed polynomials] \label{assumption:tracetrimmed}
We assume that traces of trimmed polynomials are trimmed polynomials, that is: for all $l\in\Zintegers$, $f\in\FM{d}(\Mh)$, $d'\in[0,d]$ and $r\ge 0$,
\[
  \tr_{f'} \PLtrimmed{r}{l}(f) \subset \PLtrimmed{r}{l}(f')\qquad \forall f' \in \FM{d'}(f),
\]
where $\tr_{f'}:C^0\Lambda^l(f)\to C^0\Lambda^l(f')$ is the standard trace operator of differential forms, with $\tr_{f'}={\rm Id}$ if $f'=f$.
\end{assumption}

We note that, when $r=0$, local polynomial spaces satisfying Assumptions \ref{assumption:homogeneous} and \ref{assumption:tracetrimmed} can  be trivially constructed by considering constant forms on each $f\in\Delta_d(\Mh)$; in that case, the discrete de Rham complex built in Section \ref{sec:DDR} is akin to a Compatible Discrete Operator sequence on the (curved) manifold mesh.
In the general setting $r\ge 1$, we show in Section \ref{sec:construction.polynomial} that the existence of particular local charts allows for explicit constructions of local polynomial spaces satisfying Assumptions \ref{assumption:homogeneous} and \ref{assumption:tracetrimmed}.

\section{Discrete de Rham complex}\label{sec:DDR}

\subsection{Construction}

The construction of the DDR complex is done following the approach of \cite{Bonaldi.Di-Pietro.ea:23} in the flat case, with one major difference due to the fact that we are working on manifolds instead of Euclidean spaces: the Hodge star operator $\star$ on $\Omega$ (or on its submanifolds forming the mesh) depends on the metric of this manifold (or the induced metric on the submanifolds) -- see \eqref{eq:Hodge.star.coords} in Appendix \ref{sec:tensor.calculus} for its definition. As a consequence, it does not preserve polynomial spaces, since the coefficients of the metric, in the local coordinates chosen to define the notion of polynomial functions, are not necessarily constant. This leads us to consider a slightly different definition of the discrete space \eqref{eq:def.Xkh} and related interpolator \eqref{eq:def.I}, as well as a slightly different local discrete exterior derivative and potential (Definition \ref{def:dk.Pk}).

We recall that, on a given cell $f\in\Delta_d(\Mh)$ and for any $k\in\{0,\ldots,d\}$, the Hodge star operator $\star$ is a mapping that associates to each $k$-differential form $\omega$ on $f$ a $(d-k)$-differential form $\star\omega$ on $f$ such that, for any $x\in f$ and any $(d-k)$-alternating form $\mu$ on the tangent space $T_xf$,
\[
\langle (\star\omega)_x,\mu\rangle_{g_x}\vol_x = \omega_x\wedge\mu,
\]
where $\langle\cdot,\cdot\rangle_{g_x}$ is the metric-dependent inner product on $(d-k)$-alternating forms on $T_xf$, and $\vol_x$ is the volume form on this tangent space. We also refer to \eqref{eq:Hodge.star.coords} for an expression in coordinates on Lorentzian manifolds -- the only difference with Riemannian manifolds being in the removal of the minus sign in front of $\det g$.

Given a polynomial degree $r\ge 0$, the discrete counterpart $\uH{k}{h}$ of the space $H\Lambda^k(\Omega)$, $0 \leq k \leq n$, is defined as
\begin{equation}\label{eq:def.Xkh}
  \uH{k}{h} := \bigtimes_{d = k}^{n} \bigtimes_{f \in \FM{d}(\Mh)} \star^{-1}\PLtrimmed{r}{d-k}(f).
\end{equation}
A generic $\ul\omega_h\in\uH{k}{h}$ is denoted by $\ul\omega_h=(\omega_f)_{f\in\Delta_d(\Mh),\,d\in[k,n]}$ with $\omega_f\in \star^{-1}\PLtrimmed{r}{d-k}(f)$
for all $f\in\Mh$. As commonly done in DDR constructions, we denote by $\ul\omega_f=(\omega_{f'})_{f'\in\Delta_d'(f),\,d'\in[k,d]}$ the restriction of $\ul\omega_h$ to $f\in\Delta_d(\Mh)$. The same convention (replacing an index $h$ with $f$) will also be used to denote restrictions of operators. 

The interpolator $\uI{k}{h}:C^0\Lambda^{k}(\overline{\Omega})\to\uH{k}{h}$ is defined by projecting the traces on the polynomial spaces: for all $\omega\in C^0\Lambda^{k}(\overline{\Omega})$,
  \begin{equation}\label{eq:def.I}
    \uI{k}{h} \omega := (\star^{-1}\ltproj{d-k}{f}\star \tr_f \omega)_{f \in \FM{d}(\Mh), d \in [k,n]}
  \end{equation}
  where $\ltproj{d-k}{f}:L^2\Lambda^{d-k}(f)\to\PLtrimmed{r}{d-k}(f)$ is the $L^2$-orthogonal projector on the trimmed space.


\begin{definition}[Local discrete exterior derivative and discrete potential]\label{def:dk.Pk}
  Let $f \in \FM{d}(\Mh)$, $k\in [0,d]$ and $r\ge 0$. The local discrete exterior derivative
  $\DIFF^k_{r,f} \st \uH{k}{f} \rightarrow \star^{-1}\PL{r}{d-k-1}(f)$ and discrete potential
  $P^k_{r,f} \st \uH{k}{f} \rightarrow \star^{-1} \PL{r}{d-k}(f)$ are defined inductively on the dimension of $f$ as follows.
  For all $\ul{\omega}_f\in \uH{k}{f}$:
  \begin{itemize}[leftmargin=1em]
    \item If $d = k$:
      \begin{equation} \label{eq:def.P.d=k}
        P^k_{r,f} \ul{\omega}_f :=\, \omega_f \in \star^{-1}\PL{r}{0}(f).
      \end{equation}
    \item If $d \geq k+1$:
      \begin{itemize}[label=$\diamondsuit$,leftmargin=1em]
        \item 
          $\forall \mu_f \in \PL{r}{d-k-1}(f)$,
          \begin{equation} \label{eq:def.d}
            \int_f \DIFF^k_{r,f} \ul{\omega}_f \wedge \mu_f
              := (-1)^{k+1} \int_f \omega_f \wedge \DIFF \mu_f + \int_\pf P^k_{r,\pf} \ul{\omega}_f \wedge \tr_\pf \mu_f,
          \end{equation}
          where we have denoted by $P^k_{r,\pf}\ul\omega_f$ the piecewise function on $\pf$ obtained patching the functions $P^k_{r,f'}\ul\omega_{f'}$ for $f'\in\Delta_{d-1}(f)$.
        \item 
          $\forall \mu_f \in \KOSZUL \PL{r}{d-k}(f)$, $\forall\nu_f \in \KOSZUL \PL{r-1}{d-k+1}(f)$,
          \begin{equation} \label{eq:def.P}
            \begin{aligned}
              (-1)^{k+1} \int_f P^k_{r,f} \ul{\omega}_f \wedge (\DIFF \mu_f + \nu_f)
              :={}& \int_f \DIFF^k_{r,f} \ul{\omega}_f \wedge \mu_f - \int_\pf P^k_{r,\pf} \ul{\omega}_\pf \wedge \tr_\pf \mu_f \\
              &+ (-1)^{k+1} \int_f \omega_f \wedge v_f.
            \end{aligned}
          \end{equation}
      \end{itemize}
      In these formulas, the orientation on the boundary $\partial f$ is the one induced by $f$.
  \end{itemize}
\end{definition}

The global discrete exterior derivative is then obtained projecting the local discrete exterior derivatives on the spaces forming the component of the global discrete space \eqref{eq:def.Xkh}: $\ul\DIFF^k_{r,h}:\uH{k}{h}\to\uH{k+1}{h}$ is given by
  \[
    \ul{\DIFF}^k_{r,h} \ul{\omega}_h := (\star^{-1}\ltproj{d-k-1}{f}\star \DIFF^k_{r,f} \ul{\omega}_f)_{f \in \FM{d}(\Mh), d \in [k+1,n]}.
  \]

The discrete DDR sequence on $\Mh$ is then
\begin{equation}\label{eq:DDR.complex}
  \begin{tikzcd}
   \cdots  \arrow{r}{\ul{\DIFF}^{k-1}_{r,h}}& \uH{k}{h} \arrow{r}{\ul\DIFF^k_{r,h}} & \uH{k+1}{h} \arrow{r}{\ul\DIFF^{k+1}_{r,h}}& \cdots.
  \end{tikzcd}
\end{equation}

To design numerical schemes based on this discrete complex, we need to define discrete $L^2$-like inner products.

\begin{definition}[Discrete $L^2$-like inner product] \label{def:inner.product}
    For $k\in [0,n]$ and $r\ge 0$, the discrete $L^2$-like inner product on $\uH{k}{h}$ is defined by: for all $\ul{\omega}_h, \ul{\mu}_h \in \uH{k}{h}$,
    \begin{align*}
        \langle \ul{\omega}_h, \ul{\mu}_h \rangle =
        {}&\sum_{f \in \FM{n}(\Mh)} \Bigg(
        \int_f P^k_{r,f}\ul{\omega}_f \wedge \star P^k_{r,f}\ul{\mu}_f \\
        {}&\qquad+ \sum_{d=k}^{n-1} h_f^{n-d} \sum_{f'\in\FM{d}(f)} 
        \int_{f'} (P^k_{r,f'}\ul{\omega}_{f'} - \tr_{f'} P^k_{r,f} \ul{\omega}_{f}) 
        \wedge \star (P^k_{r,f'}\ul{\mu}_{f'} - \tr_{f'} P^k_{r,f} \ul{\mu}_{f})
        \Bigg).
    \end{align*}  
\end{definition}

\subsection{Algebraic properties}

The following results are covered in the flat case in \cite{Bonaldi.Di-Pietro.ea:23}. Their proofs there only rely on the relations between the Hodge star operator and integrals of differential forms (which are also satisfied in the case of manifolds), the properties of full and trimmed polynomial spaces (which are mimicked by our constructions and assumptions in Section \ref{sec:discrete_spaces}), and the formulas defining the local discrete exterior derivative and potential reconstruction (same formulas as in Definition \ref{def:dk.Pk}, up to the bijective transformation of degrees of freedom through the introduction of $\star^{-1}$ in \eqref{eq:def.Xkh}). As a consequence, the proofs of \cite{Bonaldi.Di-Pietro.ea:23} directly adapt to our setting, and are therefore omitted.

\begin{lemma}[Link between discrete exterior derivatives on submanifolds]\label{lem:link.sub}
  Under Assumptions \ref{assumption:homogeneous} and \ref{assumption:tracetrimmed}, for all $d \geq 2$, $f \in \FM{d}(\Mh)$, $k\in [0,d]$, $r\ge 0$ and $\ul{\omega}_f \in \uH{k}{f}$, it holds
  \begin{equation*}
    \int_f \DIFF^k_{r,f} \ul{\omega}_f \wedge \DIFF \alpha_f = (-1)^{k+1} \int_{\pf} \DIFF^k_{r,\pf} \ul{\omega}_{\pf} \wedge \tr_{\pf} \alpha_f,
    \quad \forall \alpha_f \in \PLtrimmed{r+1}{d-k-2}(f).
  \end{equation*}
\end{lemma}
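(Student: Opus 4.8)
The plan is to convert the volume integral on $f$ into an integral over $\partial f$ by means of the definition of the discrete exterior derivative, and then to establish a boundary identity on the closed manifold $\partial f$ by an induction on dimension.

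\emph{Reduction to the boundary.} First I would check that $\DIFF\alpha_f$ is an admissible test form in \eqref{eq:def.d}: by the Remark after Definition \ref{def:poly.trimmed}, $\alpha_f\in\PLtrimmed{r+1}{d-k-2}(f)\subset\PL{r+1}{d-k-2}(f)$, so the complex property \eqref{eq:polynomial.complex.diff} gives $\DIFF\alpha_f\in\PL{r}{d-k-1}(f)$. Taking $\mu_f=\DIFF\alpha_f$ in \eqref{eq:def.d}, the volume term vanishes since $\DIFF\DIFF\alpha_f=0$, and using that the trace commutes with $\DIFF$ we obtain
\begin{equation*}
  \int_f \DIFF^k_{r,f}\ul\omega_f\wedge\DIFF\alpha_f = \int_{\partial f} P^k_{r,\partial f}\ul\omega_{\partial f}\wedge\DIFF\tr_{\partial f}\alpha_f .
\end{equation*}
Hence it suffices to prove, for every cell $g$ and every $\beta=\tr_{\partial g}\gamma$ with $\gamma$ a trimmed polynomial form of degree $r+1$ on $g$, the boundary identity
\begin{equation}\label{eq:plan.bdry}
  \int_{\partial g} P^k_{r,\partial g}\ul\omega_{\partial g}\wedge\DIFF\beta = (-1)^{k+1}\int_{\partial g}\DIFF^k_{r,\partial g}\ul\omega_{\partial g}\wedge\beta ,
\end{equation}
the lemma being \eqref{eq:plan.bdry} for $g=f$, $\beta=\tr_{\partial f}\alpha_f$.

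\emph{Induction and per-cell application.} I would prove \eqref{eq:plan.bdry} by induction on $\dim g$, intertwined with the lemma itself: \eqref{eq:plan.bdry} at dimension $d$ implies the lemma for $d$-cells via the reduction above, and conversely \eqref{eq:plan.bdry} at dimension $d$ will be reduced to the lemma and to \eqref{eq:plan.bdry} on $(d-1)$-cells, so the induction closes with trivial low-dimensional base cases. To prove \eqref{eq:plan.bdry}, I split $\int_{\partial g}=\sum_{g'\in\FM{\dim g-1}(g)}\int_{g'}$. By Assumption \ref{assumption:tracetrimmed}, $\tr_{g'}\gamma$ is a trimmed polynomial on $g'$, so Definition \ref{def:poly.trimmed} lets me write $\tr_{g'}\gamma=\DIFF a_{g'}+\KOSZUL b_{g'}$ with $\KOSZUL b_{g'}\in\KOSZUL\PL{r}{d-k-1}(g')$ and with $a_{g'}$ chosen in the trimmed space $\PLtrimmed{r+1}{d-k-3}(g')$ (possible because $\DIFF\PL{r+1}{d-k-3}(g')=\DIFF\PLtrimmed{r+1}{d-k-3}(g')$ by Lemma \ref{lemma:poly.dec}). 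Then $\DIFF\tr_{g'}\gamma=\DIFF\KOSZUL b_{g'}$, so $\KOSZUL b_{g'}$ is an admissible test form for the potential relation \eqref{eq:def.P} on $g'$ (with $\nu_{g'}=0$). Applying \eqref{eq:def.P} and substituting $\KOSZUL b_{g'}=\tr_{g'}\gamma-\DIFF a_{g'}$ produces the wanted term $(-1)^{k+1}\int_{g'}\DIFF^k_{r,g'}\ul\omega_{g'}\wedge\tr_{g'}\gamma$, plus a volume remainder $\int_{g'}\DIFF^k_{r,g'}\ul\omega_{g'}\wedge\DIFF a_{g'}$ and two boundary remainders on $\partial g'$.

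\emph{Cancellation of the codimension-two terms.} This is where the work lies. The volume remainder is precisely the lemma on the lower-dimensional cell $g'$ applied to the trimmed form $a_{g'}$, so the induction hypothesis rewrites it as an integral of $\DIFF^k_{r,\partial g'}\ul\omega_{\partial g'}\wedge\tr_{\partial g'}a_{g'}$ over $\partial g'$; one of the boundary remainders, after applying \eqref{eq:plan.bdry} on the closed manifold $\partial g'$ (induction hypothesis, lower dimension), yields the same integral with the opposite sign, so these two contributions cancel exactly. The last boundary remainder carries $\tr_{\partial g'}\tr_{g'}\gamma$, which on each $(\dim g-2)$-cell $g''$ equals $\tr_{g''}\gamma$ and hence depends only on the globally defined $\gamma$, not on the per-cell splitting. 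Summing over $g'$ and using that $\partial g$ is a closed manifold, every such $g''$ is shared by exactly two cells $g'$ with opposite induced orientations on $g''$, while the integrand $P^k_{r,g''}\ul\omega_{g''}\wedge\tr_{g''}\gamma$ is single-valued; these contributions therefore cancel pairwise, and \eqref{eq:plan.bdry} follows. The main obstacle is exactly this double cancellation: choosing the strengthened hypothesis \eqref{eq:plan.bdry} so that the $a_{g'}$-terms disappear, and tracking orientations so that the $\gamma$-trace terms vanish. The degenerate degrees $d-k-2\le 0$ (where $\alpha_f$ is zero or a scalar and the Koszul part of the decomposition is absent) and the lowest-dimensional base cases are checked directly.
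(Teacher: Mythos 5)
Your proof is correct and is essentially the argument that the paper delegates to \cite[Lemma~22]{Bonaldi.Di-Pietro.ea:23}: testing \eqref{eq:def.d} with $\mu_f=\DIFF\alpha_f$ to kill the volume term, then an induction on dimension intertwining the lemma with the boundary identity, in which traces are decomposed via Assumption \ref{assumption:tracetrimmed} and Lemma \ref{lemma:poly.dec} so that \eqref{eq:def.P} can be tested with the Koszul part, the exact part being absorbed by the two induction hypotheses, and the leftover codimension-two terms cancelling by opposite induced orientations. The only slip is in your parenthetical on the degenerate degree $d-k-2=0$: there it is the exterior-derivative part of the decomposition that is absent (the trace splits as a constant plus a Koszul term, and the constant remainders cancel by testing \eqref{eq:def.d} with the constant form $1$), not the Koszul part --- but this case does check out directly, as you claim.
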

\begin{proof}
  See \cite[Lemma~22]{Bonaldi.Di-Pietro.ea:23}.
\end{proof}
  Lemma \ref{lem:link.sub} is a discrete counterpart of the formula
\[
 \int_f \DIFF\omega\wedge\DIFF\alpha =(-1)^{k+1}\int_f \DIFF (\DIFF \omega \wedge \alpha) = 
(-1)^{k+1}  \int_\pf  \tr_{\pf} \DIFF\omega\wedge\DIFF\alpha,
\]
where the first equality is obtained using $\DIFF (\DIFF \omega \wedge \alpha)=(\cancel{\DIFF\DIFF \omega}) \wedge \alpha+
(-1)^{k+1} \DIFF \omega \wedge \DIFF\alpha$.
While mostly useful at a technical level, it provides some insights on the compatibility between the 
continuous and the discrete exterior derivative, 
as well as on the relationship between the trace of the discrete derivative and the derivative on the boundary submanifold.
\begin{lemma}[Complex property]\label{lem:disc.complex}
  Under Assumptions \ref{assumption:homogeneous} and \ref{assumption:tracetrimmed}, for all $k \geq 1$, $d \geq k$, $f \in \FM{d}(\Mh)$, $r\ge 0$ and $\ul{\omega}_f \in \uH{k}{f}$, it holds
  \begin{align}
      P^k_{r,f} \ul{\DIFF}^{k-1}_{r,f} \ul{\omega}_f ={}& \DIFF^{k-1}_{r,f} \ul{\omega}_f, \label{eq:link.P.d}\\
      \ul{\DIFF}^k_{r,f} \ul{\DIFF}^{k-1}_{r,f} \ul{\omega}_f ={}& 0 . \nonumber
  \end{align}
\end{lemma}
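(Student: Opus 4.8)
The plan is to prove the potential identity \eqref{eq:link.P.d} and the complex property together by induction on the dimension $d$ of $f$, mirroring the inductive definition of $\DIFF^k_{r,f}$ and $P^k_{r,f}$ in Definition \ref{def:dk.Pk}. (The argument is to be read with $\ul\omega_f\in\uH{k-1}{f}$, so that $\ul\DIFF^{k-1}_{r,f}\ul\omega_f\in\uH{k}{f}$.) The induction will in fact carry the slightly stronger \emph{local} complex property $\DIFF^k_{r,f}\ul\DIFF^{k-1}_{r,f}\ul\omega_f=0$, from which the stated (projected) complex property follows. The base case $d=k$ is immediate: $P^k_{r,f}$ reduces to the $f$-component by \eqref{eq:def.P.d=k}, the $f$-component of $\ul\DIFF^{k-1}_{r,f}\ul\omega_f$ equals $\DIFF^{k-1}_{r,f}\ul\omega_f$ because $\PLtrimmed{r}{0}(f)=\PL{r}{0}(f)$ forces $\ltproj{0}{f}$ to act as the identity there, and $\ul\DIFF^k_{r,f}\ul\DIFF^{k-1}_{r,f}\ul\omega_f$ lies in the trivial space $\uH{k+1}{f}$.

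For the inductive step at a cell $f$ of dimension $d\ge k+1$, I would first establish $\DIFF^k_{r,f}\ul\alpha_f=0$, where $\ul\alpha_f:=\ul\DIFF^{k-1}_{r,f}\ul\omega_f$. Testing the defining relation \eqref{eq:def.d} against an arbitrary $\mu_f\in\PL{r}{d-k-1}(f)$, the volume term $\int_f\alpha_f\wedge\DIFF\mu_f$ (with $\alpha_f$ the $f$-component of $\ul\alpha_f$) becomes $\int_f\DIFF^{k-1}_{r,f}\ul\omega_f\wedge\DIFF\mu_f$, since $\DIFF\mu_f\in\PLtrimmed{r}{d-k}(f)$ and the projector hidden in $\alpha_f$ is $L^2$-orthogonal onto that trimmed space. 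Lemma \ref{lem:link.sub}, applied with $k$ replaced by $k-1$ and with $\mu_f\in\PL{r}{d-k-1}(f)\subset\PLtrimmed{r+1}{d-k-1}(f)$, rewrites this as $(-1)^{k}\int_{\pf}\DIFF^{k-1}_{r,\pf}\ul\omega_{\pf}\wedge\tr_{\pf}\mu_f$. The boundary term already in \eqref{eq:def.d}, namely $\int_{\pf}P^k_{r,\pf}\ul\alpha_{\pf}\wedge\tr_{\pf}\mu_f$, equals $\int_{\pf}\DIFF^{k-1}_{r,\pf}\ul\omega_{\pf}\wedge\tr_{\pf}\mu_f$ by the inductive hypothesis (the potential identity on the $(d-1)$-cells). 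The two contributions cancel, the volume term carrying prefactor $(-1)^{k+1}(-1)^{k}=-1$ against the $+1$ of the boundary term, so $\DIFF^k_{r,f}\ul\alpha_f$ pairs to zero against every $\mu_f$ and hence vanishes. The complex property $\ul\DIFF^k_{r,f}\ul\DIFF^{k-1}_{r,f}\ul\omega_f=0$ then follows componentwise: the $f$-component is the projection of $\DIFF^k_{r,f}\ul\alpha_f$ (hence $0$), and each lower-dimensional component is $\DIFF^k_{r,f'}\ul\DIFF^{k-1}_{r,f'}\ul\omega_{f'}$, zero by the inductive hypothesis.

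With the local complex property in hand, I would prove \eqref{eq:link.P.d}. Both sides lie in $\star^{-1}\PL{r}{d-k}(f)$, so it suffices to pair them against every element of $\PL{r}{d-k}(f)$; by Lemma \ref{lemma:poly.dec} each such element is of the form $\DIFF\mu_f+\nu_f$ with $\mu_f\in\KOSZUL\PL{r}{d-k}(f)$ and $\nu_f\in\KOSZUL\PL{r-1}{d-k+1}(f)$, which is exactly the test space in \eqref{eq:def.P}. Evaluating the right-hand side of \eqref{eq:def.P} for $P^k_{r,f}\ul\alpha_f$, the term $\int_f\DIFF^k_{r,f}\ul\alpha_f\wedge\mu_f$ drops by the local complex property; the boundary term is identified through the inductive hypothesis; and the $\nu_f$-term equals $(-1)^{k+1}\int_f\DIFF^{k-1}_{r,f}\ul\omega_f\wedge\nu_f$ after removing the projector against the trimmed form $\nu_f$. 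A further application of Lemma \ref{lem:link.sub} (again $k\to k-1$, now with $\mu_f\in\KOSZUL\PL{r}{d-k}(f)\subset\PLtrimmed{r+1}{d-k-1}(f)$) reassembles these into $(-1)^{k+1}\int_f\DIFF^{k-1}_{r,f}\ul\omega_f\wedge(\DIFF\mu_f+\nu_f)$, i.e. the pairing of the right-hand side of \eqref{eq:link.P.d}. Non-degeneracy of the wedge pairing on $\star^{-1}\PL{r}{d-k}(f)$ then yields \eqref{eq:link.P.d}, closing the induction.

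The hard part is bookkeeping rather than a new idea: the signs and degree shifts must be tracked precisely when Lemma \ref{lem:link.sub} is invoked with $k$ shifted to $k-1$, and every removal of an $L^2$-projector onto a trimmed space must be justified by checking that the companion form genuinely belongs to that trimmed space. I would also verify the degenerate ranges (small $d-k$), where some summands in Lemma \ref{lemma:poly.dec} and Definition \ref{def:poly.trimmed} collapse, to be sure that the test forms $\DIFF\mu_f+\nu_f$ still exhaust $\PL{r}{d-k}(f)$ via the identity $\DIFF\KOSZUL\PL{r}{d-k}(f)=\DIFF\PL{r+1}{d-k-1}(f)$. Notably, the manifold enters only through the factors $\star^{-1}$ appearing in \eqref{eq:def.Xkh}--\eqref{eq:def.I} and Definition \ref{def:dk.Pk}; since the metric is never differentiated in these algebraic manipulations, the argument runs in parallel with the flat case of \cite{Bonaldi.Di-Pietro.ea:23}.
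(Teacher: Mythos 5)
Your proposal is correct and takes essentially the same approach as the paper: the paper's proof of this lemma is a citation to \cite[Theorem~23]{Bonaldi.Di-Pietro.ea:23}, whose argument is precisely your simultaneous induction on the cell dimension — proving \eqref{eq:link.P.d} and the (local, then projected) complex property together, as the paper's remark after the lemma insists must be done — with Lemma \ref{lem:link.sub} (applied with $k-1$) turning the bulk term into a boundary term, projectors removed only against genuinely trimmed companions, and Lemma \ref{lemma:poly.dec} together with $\DIFF\KOSZUL\PL{r}{d-k}(f)=\DIFF\PL{r+1}{d-k-1}(f)$ ensuring the test forms $\DIFF\mu_f+\nu_f$ exhaust $\PL{r}{d-k}(f)$. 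The only cosmetic imprecision is that the lower-dimensional components of $\ul{\DIFF}^k_{r,f}\ul{\DIFF}^{k-1}_{r,f}\ul{\omega}_f$ are the projections $\star^{-1}\ltproj{d'-k-1}{f'}\star\,\DIFF^k_{r,f'}\ul{\DIFF}^{k-1}_{r,f'}\ul{\omega}_{f'}$ rather than the local derivatives themselves, which changes nothing since the latter vanish by your induction hypothesis.
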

\begin{proof}
  See \cite[Theorem~23]{Bonaldi.Di-Pietro.ea:23}.
\end{proof}
  The complex property is primordial to the whole construction of the DDR sequence. 
  This relation is essentially combinatoric: 
  composing two discrete derivatives on a $d$-cell leave out integral terms on the boundary $(d-2)$-cells, each one appearing twice (once for each boundary $(d-1)$-cell it belongs to) with opposite signs, thus cancelling out.
  While it would look straightforward from \eqref{eq:def.d}, 
  we must be careful since the final operator $\ul{\DIFF}^k_{r,f}$ is a projection of the local discrete exterior derivative $\DIFF^k_{r,f}$.
  This is why the complex property result is intertwined with the link 
  $P^k_{r,f}\ul{\DIFF}^{k-1}_{r,f} = \DIFF^{k-1}_{r,f}$ between potential reconstruction and discrete exterior derivative, 
  showing that no information is lost during the projection step. As a matter of fact, the proofs of both properties has to be done simultaneously.
\begin{lemma}\label{lem:proj.pot}
  Under Assumptions \ref{assumption:homogeneous} and \ref{assumption:tracetrimmed}, for all $d \geq 0$, $k\in [0,d]$, $f \in \FM{d}(\Mh)$, $r\ge 0$ and $\ul{\omega}_f \in \uH{k}{f}$, it holds
  \begin{equation*}
    \star^{-1}\ltproj{d-k}{f}\star P^k_{r,f} \ul{\omega}_f = \omega_f
  \end{equation*}
\end{lemma}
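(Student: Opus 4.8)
The plan is to first strip the Hodge stars from the statement and recast it as a bare wedge-pairing identity. Both $\omega_f$ and $\star^{-1}\ltproj{d-k}{f}\star P^k_{r,f}\ul{\omega}_f$ lie in $\star^{-1}\PLtrimmed{r}{d-k}(f)$ (the former by definition of $\uH{k}{f}$, the latter by construction of $\ltproj{d-k}{f}$), so since $\star$ is injective the claim is equivalent to $\ltproj{d-k}{f}\star P^k_{r,f}\ul{\omega}_f=\star\omega_f$. As $\star\omega_f\in\PLtrimmed{r}{d-k}(f)$ and $\ltproj{d-k}{f}$ is the $L^2$-orthogonal projector onto that space, this amounts to $(\star P^k_{r,f}\ul{\omega}_f,\eta)_{L^2}=(\star\omega_f,\eta)_{L^2}$ for all $\eta\in\PLtrimmed{r}{d-k}(f)$. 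Using the symmetry $\alpha\wedge\star\beta=\beta\wedge\star\alpha$ together with $\star\star=(-1)^{k(d-k)}\mathrm{Id}$ on $k$-forms, the two sign factors cancel and each inner product collapses to $(\star\xi,\eta)_{L^2}=\int_f\xi\wedge\eta$ for any $k$-form $\xi$. It therefore suffices to prove
\[
  \int_f P^k_{r,f}\ul{\omega}_f\wedge\eta=\int_f\omega_f\wedge\eta\qquad\forall\eta\in\PLtrimmed{r}{d-k}(f).
\]

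When $d=k$ this is immediate since $P^k_{r,f}\ul{\omega}_f=\omega_f$ by \eqref{eq:def.P.d=k}. For $d\ge k+1$ I would split the test space along Definition \ref{def:poly.trimmed}, $\PLtrimmed{r}{d-k}(f)=\DIFF\PL{r}{d-k-1}(f)\oplus\KOSZUL\PL{r-1}{d-k+1}(f)$, and check the identity on each summand. The Koszul part is handled directly: for $\eta=\nu_f\in\KOSZUL\PL{r-1}{d-k+1}(f)$, taking $\mu_f=0$ in \eqref{eq:def.P} kills the $\DIFF^k_{r,f}$ and boundary contributions and leaves precisely $(-1)^{k+1}\int_f P^k_{r,f}\ul{\omega}_f\wedge\nu_f=(-1)^{k+1}\int_f\omega_f\wedge\nu_f$, which is the desired equality.

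For the exact part $\eta=\DIFF\alpha$ with $\alpha\in\PL{r}{d-k-1}(f)$, I would use Lemma \ref{lemma:poly.dec} to rewrite $\DIFF\alpha=\DIFF\mu_f$ for some $\mu_f\in\KOSZUL\PL{r-1}{d-k}(f)\subset\KOSZUL\PL{r}{d-k}(f)$, so that $\mu_f$ is an admissible test form in both \eqref{eq:def.d} and \eqref{eq:def.P}. Setting $\nu_f=0$ in \eqref{eq:def.P} gives $(-1)^{k+1}\int_f P^k_{r,f}\ul{\omega}_f\wedge\DIFF\mu_f=\int_f\DIFF^k_{r,f}\ul{\omega}_f\wedge\mu_f-\int_\pf P^k_{r,\pf}\ul{\omega}_\pf\wedge\tr_\pf\mu_f$, and substituting the defining relation \eqref{eq:def.d} for the first term on the right makes the two boundary integrals cancel, leaving $(-1)^{k+1}\int_f P^k_{r,f}\ul{\omega}_f\wedge\DIFF\mu_f=(-1)^{k+1}\int_f\omega_f\wedge\DIFF\mu_f$. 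This is the identity on the exact summand, and the proof is complete.

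I expect the two delicate points to be, first, the degree bookkeeping that guarantees $\DIFF\PL{r}{d-k-1}(f)=\DIFF\KOSZUL\PL{r-1}{d-k}(f)$, so that the test spaces appearing in the definition of $P^k_{r,f}$ genuinely cover the whole trimmed space; this rests squarely on the decomposition of Lemma \ref{lemma:poly.dec}. Second, the opening Hodge-star reduction, where one must verify that the signature-dependent sign in $\star\star$ enters identically on both sides and hence drops out, so that the $L^2$-product legitimately reduces to a plain wedge integral. Everything after these observations is a mechanical substitution of the defining formulas \eqref{eq:def.d} and \eqref{eq:def.P}.
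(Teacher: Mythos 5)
Your proof is correct and takes essentially the same route as the paper's own proof, which simply defers to \cite[Lemma~24]{Bonaldi.Di-Pietro.ea:23}: reduce the statement to the wedge-moment identity $\int_f P^k_{r,f}\ul{\omega}_f\wedge\eta=\int_f\omega_f\wedge\eta$ for all $\eta\in\PLtrimmed{r}{d-k}(f)$, then obtain the Koszul moments from \eqref{eq:def.P} with $\mu_f=0$ and the exact moments by combining \eqref{eq:def.P} (with $\nu_f=0$) with \eqref{eq:def.d}, using Lemma \ref{lemma:poly.dec} to write $\DIFF\PL{r}{d-k-1}(f)=\DIFF\KOSZUL\PL{r-1}{d-k}(f)$. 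Your opening Hodge-star reduction, with the cancellation of the $(-1)^{k(d-k)}$ signs, is precisely the (correctly handled) adaptation needed to transfer the flat-case argument to the manifold setting, where the discrete spaces carry the extra $\star^{-1}$.
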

\begin{proof}
  See \cite[Lemma~24]{Bonaldi.Di-Pietro.ea:23}.
\end{proof}
  Each cell component encodes the moment against a trimmed polynomial space.
  The role of the potential reconstruction operator is to use the data on the boundary 
  to enrich the information in the cell from the trimmed to the full polynomial space. 
  Lemma \ref{lem:proj.pot} shows that this process does not alter the data already available on the cell, 
  and that the potential reconstruction is actually a higher-order enhancement of that data (see \cite[Lemma~24]{Bonaldi.Di-Pietro.ea:23} for a more detailed formula regarding this enhancement).
\begin{theorem}[Commutation property]\label{thm:commute}
  Under Assumptions \ref{assumption:homogeneous} and \ref{assumption:tracetrimmed}, for all $k \geq 0$, $d > k$, $f\in\FM{d}(\Mh)$ and $r\ge 0$, it holds
  \begin{equation}\label{eq:commutation}
    \ul{\DIFF}^k_{r,f} (\uI{k}{f} \omega) = \uI{k+1}{f} (\DIFF \omega) \quad \forall\omega\in C^1\Lambda^k(\overline{f}).
  \end{equation}
\end{theorem}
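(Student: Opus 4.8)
The plan is to prove the identity componentwise. Since $\star^{-1}$ is a bijection and both the interpolator \eqref{eq:def.I} and the global discrete derivative are defined cell by cell, it suffices to show, for each $f\in\FM{d}(\Mh)$ with $d>k$, that $\ltproj{d-k-1}{f}\star\DIFF^k_{r,f}\uI{k}{f}\omega=\ltproj{d-k-1}{f}\star\tr_f\DIFF\omega$; here I use that the restriction of $\uI{k}{h}\omega$ to $f$ is $\uI{k}{f}(\tr_f\omega)$ (by transitivity of traces) and that $\tr_f\DIFF\omega=\DIFF\tr_f\omega$. Writing $l:=d-k-1$ and invoking the relation between the Hodge star and the $L^2$-pairing of forms (so that $\star$ is, up to a fixed global sign, an $L^2$-isometry and $\ltproj{l}{f}$ is the $L^2$-orthogonal projector onto $\PLtrimmed{r}{l}(f)$), this equality is equivalent to
\[
  \int_f\big(\DIFF^k_{r,f}\uI{k}{f}\omega-\tr_f\DIFF\omega\big)\wedge\eta=0
  \qquad\forall\eta\in\PLtrimmed{r}{l}(f).
\]

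Next I would expand both integrals against a fixed $\eta\in\PLtrimmed{r}{l}(f)\subset\PL{r}{l}(f)$. For the discrete term, \eqref{eq:def.d} gives $\int_f\DIFF^k_{r,f}\uI{k}{f}\omega\wedge\eta=(-1)^{k+1}\int_f(\uI{k}{f}\omega)_f\wedge\DIFF\eta+\int_\pf P^k_{r,\pf}\uI{k}{f}\omega\wedge\tr_\pf\eta$. The cell component $(\uI{k}{f}\omega)_f=\star^{-1}\ltproj{d-k}{f}\star\tr_f\omega$ is the $L^2$-projection of $\tr_f\omega$ onto $\star^{-1}\PLtrimmed{r}{d-k}(f)$; since $\eta$ is trimmed, $\DIFF\eta\in\DIFF\PL{r}{l}(f)\subset\PLtrimmed{r}{d-k}(f)$ by Definition \ref{def:poly.trimmed}, so the projection error $(\uI{k}{f}\omega)_f-\tr_f\omega$ is $L^2$-orthogonal to $\star^{-1}\DIFF\eta$ and the cell term may be rewritten with $\tr_f\omega$ in place of $(\uI{k}{f}\omega)_f$. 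On the continuous side, the Leibniz rule combined with Stokes' theorem yields $\int_f\tr_f\DIFF\omega\wedge\eta=(-1)^{k+1}\int_f\tr_f\omega\wedge\DIFF\eta+\int_\pf\tr_\pf\tr_f\omega\wedge\tr_\pf\eta$. Subtracting, the volume contributions cancel and the claim reduces to the vanishing of the boundary term $\int_\pf\big(P^k_{r,\pf}\uI{k}{f}\omega-\tr_\pf\tr_f\omega\big)\wedge\tr_\pf\eta$.

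Finally I would treat this boundary term cell by cell over $f'\in\FM{d-1}(f)$. By Assumption \ref{assumption:tracetrimmed}, $\tr_{f'}\eta\in\PLtrimmed{r}{(d-1)-k}(f')$, and Lemma \ref{lem:proj.pot} applied to $\uI{k}{f'}\omega$ states precisely that $\star^{-1}\ltproj{(d-1)-k}{f'}\star P^k_{r,f'}\uI{k}{f'}\omega$ equals the cell component $(\uI{k}{f'}\omega)_{f'}=\star^{-1}\ltproj{(d-1)-k}{f'}\star\tr_{f'}\omega$; translated into pairings, this means $P^k_{r,f'}\uI{k}{f'}\omega-\tr_{f'}\omega$ is $L^2$-orthogonal to $\PLtrimmed{r}{(d-1)-k}(f')$, so its integral against $\tr_{f'}\eta$ vanishes. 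Summing over the finitely many faces $f'$ closes the argument; the case $d=k+1$ is covered uniformly, since on the $k$-faces $P^k_{r,f'}$ reduces to the cell unknown by \eqref{eq:def.P.d=k}. The main obstacle is the bookkeeping around the Hodge star: because $\star$ is metric-dependent and does not preserve polynomials, every passage between an identity of trimmed projections and an identity of $L^2$-pairings $\int_f\cdot\wedge(\cdot)$ must be justified through the star-isometry, and one must keep careful track of form degrees and of the fact that the projectors in \eqref{eq:def.I} and in the global derivative are exactly the $L^2$-orthogonal projectors onto $\star^{-1}\PLtrimmed{r}{\bullet}$; once this is in place, the Leibniz/Stokes combinatorics is routine.
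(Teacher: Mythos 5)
Your proposal is correct and takes essentially the same route as the paper: the paper's own ``proof'' is a deferral to the flat-case argument of \cite[Theorem~25]{Bonaldi.Di-Pietro.ea:23}, and what you have written out is precisely that argument transported to the manifold setting --- reduce componentwise via transitivity of traces, test against $\eta\in\PLtrimmed{r}{d-k-1}(f)$ through the wedge/$L^2$ duality that lets you drop $\star^{-1}\ltproj{d-k}{f}\star$ against $\DIFF\eta\in\PLtrimmed{r}{d-k}(f)$, compare with Stokes on the continuous side, and kill the boundary residual face by face with Assumption \ref{assumption:tracetrimmed} and Lemma \ref{lem:proj.pot}. This is exactly the adaptation (including the $\star^{-1}$ bookkeeping of the degrees of freedom) that the paper asserts, in the preamble of its algebraic-properties section, carries over verbatim from the Euclidean case, so there is no gap to report.
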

\begin{proof}
  See \cite[Theorem~25]{Bonaldi.Di-Pietro.ea:23}.
\end{proof}
  The complex \eqref{eq:DDR.complex} being fully discrete, 
  the only direct link with the continuous de Rham complex is through the interpolator. 
  Theorem \ref{thm:commute} ensures that 
  the local algebraic structure of the discrete complex is consistent with the continuous one. 
  This property is key in the construction of structure-preserving and robust discretizations; see, e.g., \cite{Di-Pietro.Droniou:21*1,Beirao.Dassi.ea:22,DiPietro.ea:24}.
\begin{theorem}[Isomorphism of cohomologies]\label{thm:iso.coho}
  Under Assumptions \ref{assumption:homogeneous} and \ref{assumption:tracetrimmed}, for all $r\ge 0$ the DDR sequence \eqref{eq:DDR.complex} is a complex that has the same cohomology as the continuous de Rham complex
  \eqref{eq:dR}.
\end{theorem}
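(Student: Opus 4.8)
The plan is to follow the same philosophy as the algebraic results above: first record that \eqref{eq:DDR.complex} is a genuine complex, and then compute its cohomology by reducing it to the topology of $\Omega$. That $\ul\DIFF^{k}_{r,h}\circ\ul\DIFF^{k-1}_{r,h}=0$ is precisely the second identity in Lemma \ref{lem:disc.complex}, so only the identification of the cohomology spaces remains. My first move is to dispose of the metric: it enters the construction \eqref{eq:def.Xkh} only through the componentwise bijection $\star^{-1}$, whereas the local discrete exterior derivatives and potentials of Definition \ref{def:dk.Pk} are built from wedge products, traces and integrals alone, all metric-free. Applying $\star$ in each component therefore conjugates \eqref{eq:DDR.complex} into an isomorphic complex whose spaces are $\bigtimes_{f}\PLtrimmed{r}{d-k}(f)$ and whose differentials are formally identical to those of the flat polytopal DDR complex of \cite{Bonaldi.Di-Pietro.ea:23}, the only change being that Euclidean polynomial forms are replaced by the abstract spaces $\PL{r}{\bullet}(f)$. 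Since the flat cohomology proof uses only the algebraic properties of these spaces — the Koszul/differential decomposition (Lemma \ref{lemma:poly.dec}), the trace compatibility (Assumption \ref{assumption:tracetrimmed}) and the relations in Lemmas \ref{lem:link.sub}, \ref{lem:disc.complex}, \ref{lem:proj.pot} and Theorem \ref{thm:commute}, all reproduced by Assumptions \ref{assumption:homogeneous} and \ref{assumption:tracetrimmed} — this conjugation reduces the question to a purely homological one that no longer sees the curvature of $\Omega$.

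The heart of the argument is then the standard DDR cohomology computation, which I would organise as two nested equivalences. First, local exactness: on each cell $f$ the homotopy relation $\DIFF\KOSZUL+\KOSZUL\DIFF=\lambda\,\mathrm{Id}$ on homogeneous components (Assumption \ref{A:homogeneous.polynomials}, with $\lambda\neq0$ away from $(s,l)=(0,0)$ by Assumption \ref{A:eigenvalues}) is a discrete Poincar\'e lemma, so the local sequences are exact in positive degree. Together with the potential reconstruction (Lemma \ref{lem:proj.pot}) and the boundary identity of Lemma \ref{lem:link.sub}, this lets one trade the interior moments carried by the higher-order ($r\ge 1$) degrees of freedom for boundary data without altering cohomology classes, showing that the enrichment subcomplexes are acyclic. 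Second, after this reduction the complex is quasi-isomorphic to its lowest-order instance $r=0$: there $\PLtrimmed{0}{l}(f)=\{0\}$ for $l\ge 1$, so $\uH{k}{h}$ carries exactly one scalar per $k$-cell and its differential is the cellular coboundary, i.e.\ the complex is the cellular cochain complex $C^\bullet(\Mh;\Real)$ of the cell decomposition $\Mh$ of $\Omega$. The commutation property (Theorem \ref{thm:commute}) ensures that the interpolator $\uI{\bullet}{h}$ is a cochain map realising this chain of identifications, so the resulting isomorphism is induced by $\uI{\bullet}{h}$ and is natural.

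To close the loop I would invoke the topology of the mesh. The cells of $\Mh$ are diffeomorphic, via the maps $\phi_f$ of Definition \ref{def:equiv.meshes}, to flat polytopal cells, hence contractible, which is what makes the local exactness above legitimate; and by Assumption \ref{assum:reg.seq} and Definition \ref{def:equiv.meshes}, $\Mh$ is in cellwise bijection and topologically equivalent to a regular polytopal mesh $M_h$ that is a cell decomposition of a space homeomorphic to $\Omega$. Cellular cohomology is a homeomorphism invariant and, by de Rham's theorem, agrees with the de Rham cohomology appearing in \eqref{eq:dR}; chaining the two equivalences yields $H^k(\uH{\bullet}{h})\cong H^k_{\mathrm{dR}}(\Omega)$ for every $k$.

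I expect the genuine difficulty to lie in the first, local-to-global, reduction rather than in anything manifold-specific. Making precise that the higher-order enrichment subcomplexes are acyclic and that the surviving complex is exactly the cellular one requires the careful accounting of the boundary terms produced by \eqref{eq:def.d} and \eqref{eq:def.P} — the same bookkeeping that underlies the flat proofs of \cite{Bonaldi.Di-Pietro.ea:23}. By contrast, the two features one might fear on a manifold are handled cheaply: the metric dependence is removed once and for all by the $\star$-conjugation, and the passage from cellular to de Rham cohomology is classical as soon as $\Mh$ is recognised as a contractible cell decomposition of $\Omega$.
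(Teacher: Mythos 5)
Your proposal is correct and takes essentially the same route as the paper: the paper's proof of Theorem \ref{thm:iso.coho} is exactly the observation that, since the metric enters only through the componentwise bijection $\star^{-1}$ in \eqref{eq:def.Xkh} while the defining formulas of Definition \ref{def:dk.Pk} involve only wedge products, traces and integrals, the flat-case proof of \cite[Theorem~14]{Bonaldi.Di-Pietro.ea:23} carries over verbatim under Assumptions \ref{assumption:homogeneous} and \ref{assumption:tracetrimmed}. Your unpacking of that cited proof --- local exactness from the Koszul homotopy, quasi-isomorphism with the $r=0$ complex (one scalar per $k$-cell, cellular coboundary), then identification of cellular with de Rham cohomology --- matches both the reference and the paper's own remark following the theorem.
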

\begin{proof}
  See \cite[Theorem~14]{Bonaldi.Di-Pietro.ea:23}.
\end{proof}
  The cohomology characterises the intersection of the kernel of the exterior derivative and of its adjoint, 
  or equivalently the kernel of the Hodge-Laplacian.
  Therefore, preserving the global structure is important for the stability and the well-posedness of the systems built upon the complex.
  Theorem \ref{thm:iso.coho} implies that the kernel of the discrete Hodge-Laplacian is determined by the topology of the domain, 
  in a correct way that is compatible with the continuous one. 
  Notice that the result holds for all $r \geq 0$, and thus for $r = 0$. In that case, 
  the only non trivial components of a $k$-form are the spaces $\mathcal{H}_0\Lambda^0$ lying on $k$-cells. 
  This shows that the topological information is fully carried by the lowest-order component on the $k$-skeleton, 
  and that higher-order contributions are essentially local. See \cite[Lemma 27]{Bonaldi.Di-Pietro.ea:23} for more on this aspect.

\subsection{Consistency}\label{sec:consistency}

In the usual DDR construction, the initial consistency result for local discrete potentials and exterior derivatives is an exact polynomial consistency \cite[Theorem 15]{Bonaldi.Di-Pietro.ea:23}.
However, on manifolds, Hodge stars of polynomials are not necessarily polynomials; for this reason, it does not seem possible to obtain an exact
polynomial consistency of these local operators (see Remark \ref{rem:polynomial.consistency} for more details).
Instead, we directly establish a consistency result in terms of approximation
properties of these operators when applied to interpolates of smooth differential forms -- a result which is usually seen as a consequence of a polynomial consistency, but independently established here.
 
As the DDR construction relies on the decompositions in Definition \ref{def:poly.trimmed},
we must ensure that the direct sum does not degenerate as $h$ goes to zero. 
The setting in Section \ref{sec:construction.polynomial} gives a construction of local polynomial spaces that satisfy the following assumption.
Below, $\norm{X}{\mu}$ is a shorthand for the $L^2$-norm of $\mu\in L^2\Lambda^k(X)$.

\begin{assumption}[Local Poincar\'e inequality on Koszul complement, topological decomposition and discrete trace] \label{assumption:local.spaces}
  We assume the following bounds on the discrete spaces:
      For all $k\ge 0$, $d\ge k$, $f\in\FM{d}(\Mh)$ and $r\ge 0$ we have 
  \begin{alignat}{3}
    \norm{f}{\mu} &\lesssim h_f \norm{f}{\DIFF\mu},&&\quad \forall \mu\in\KOSZUL\PL{r-1}{k}(f),\label{eq:bound.d}\\
    \norm{f}{\alpha} + \norm{f}{\beta} &\lesssim \norm{f}{\alpha+\beta},&&\quad \forall \alpha\in\KOSZUL\PL{r-1}{k+1}(f), \quad\forall \beta\in\DIFF\PL{r+1}{k-1}(f),\label{eq:top.decomp}\\
    \norm{\pf}{\tr_\pf\mu} &\lesssim h_f^{-\frac12}\norm{f}{\mu}, 
    &&\quad \forall\mu\in\PL{r}{k}(f).\label{eq:tr.poly}
  \end{alignat}
\end{assumption}
We also assume that the discrete spaces have similar approximation properties as those of polynomials in Euclidean space (see \cite[Theorem 1.45]{Di-Pietro.Droniou:20}).
\begin{assumption}[Approximation properties]\label{assumption:local.approx}
  For all $0 \leq k \leq n$, $d\ge \min(1,k)$, $f\in\FM{d}(\Mh)$, $r\ge 0$ and $t\in [0,r]$
  \begin{equation}\label{eq:approx}
    \norm{f}{\star^{-1}\lproj{d-k}{f}\star\omega - \omega} \lesssim h_f^{t+1} \seminorm{H^{t+1}\Lambda^k(f)}{\omega}
    \quad \forall \omega \in H^{t+1}\Lambda^k(f),
  \end{equation}
  where $\lproj{d-k}{f}$ is the $L^2\Lambda^{d-k}(f)$-projection on $\PL{k}{d-k}(f)$. Note that, if $d=k=0$, we always have $\star^{-1}\lproj{0}{f}\star\omega = \omega$ since $\omega$ is then just a value at the vertex $f$.
\end{assumption}

To state the consistency result, we need a scaled norm that takes into account the regularity of differential forms and their traces on sub-dimensional cells. For $r\in\Natural$, we define the space
\[
H^{r+1}\Lambda^k(f;\Delta)\coloneq \left\{\omega\in H^{r+1}\Lambda^k(f)\,:\,\tr_{f'}\omega\in H^{r+1}\Lambda^k(f')\quad\forall f'\in\FM{d'}(f)\,,\forall d'\in [k,d-1]\right\}
\]
(note that the assumed regularity on $f$ ensures the existence of traces on $f'\in\FM{d-1}(f)$, and the assumed regularity of these traces in turn ensures the existence of traces on $f''\in\FM{d-2}(f)$, etc.), and endow it with the semi-norm
\begin{equation}\label{eq:def.scaled.norm}
  \seminorm{r+1,f,\Delta}{\omega} := \sum_{d'=k}^d h_f^{\frac{d-d'}{2}} \seminorm{H^{r+1}\Lambda^k(f')}{\tr_{f'}\omega}\quad\forall\omega\in H^{r+1}\Lambda^k(f;\Delta).
\end{equation}
\begin{theorem}[Primal consistency]\label{thm:primal.consistency}
  Under Assumptions \ref{assum:reg.seq}, \ref{assumption:homogeneous}, \ref{assumption:tracetrimmed}, \ref{assumption:local.spaces} and \ref{assumption:local.approx}, for all $k \geq 0$, $d\geq k$, $f\in\FM{d}(\Mh)$ and $r\ge 0$, it holds
    \begin{align}
      \norm{f}{P^k_{r,f} \uI{k}{f} \omega - \omega}&\lesssim h_f^{r+1} \seminorm{r+1,f,\Delta}{\omega} \quad\forall\omega \in H^{r+1}\Lambda^k(f;\Delta),\label{eq:consistency.P} \\
      \norm{f}{\DIFF^k_{r,f} \uI{k}{f} \omega - \DIFF\omega}&\lesssim h_f^{r+1} \seminorm{r+1,f,\Delta}{\DIFF\omega}\quad\forall\omega\in C^1\Lambda^k(\overline{f})\mbox{ s.t. } \DIFF\omega \in H^{r+1}\Lambda^{k+1}(f;\Delta)\label{eq:consistency.d}.
    \end{align}
\end{theorem}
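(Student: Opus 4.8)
The plan is to prove \eqref{eq:consistency.P} and \eqref{eq:consistency.d} simultaneously by induction on the dimension $d$ of $f$, establishing the estimate on $\DIFF^k_{r,f}$ before the one on $P^k_{r,f}$ on each cell. The base case $d=k$ is immediate: there $P^k_{r,f}\uI{k}{f}\omega=\omega_f=\star^{-1}\lproj{0}{f}\star\omega$ (since $\PLtrimmed{r}{0}(f)=\PL{r}{0}(f)$, the trimmed and full projectors coincide), so \eqref{eq:consistency.P} is exactly \eqref{eq:approx} with $t=r$, while $\DIFF^k_{r,f}$ takes values in $\star^{-1}\PL{r}{-1}(f)=\{0\}$ and \eqref{eq:consistency.d} is trivial. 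The guiding idea for the inductive step is to compare the discrete objects not with $\omega$ and $\DIFF\omega$ directly, but with their best polynomial approximations $\star^{-1}\lproj{d-k}{f}\star\omega$ and $\star^{-1}\lproj{d-k-1}{f}\star\DIFF\omega$: the difference between these and the continuous forms is controlled by \eqref{eq:approx}, so it suffices to bound the genuinely discrete errors $E:=P^k_{r,f}\uI{k}{f}\omega-\star^{-1}\lproj{d-k}{f}\star\omega$ and $\tilde G:=\DIFF^k_{r,f}\uI{k}{f}\omega-\star^{-1}\lproj{d-k-1}{f}\star\DIFF\omega$, whose Hodge stars live in the full polynomial spaces $\PL{r}{d-k}(f)$ and $\PL{r}{d-k-1}(f)$.

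For \eqref{eq:consistency.d} I would first read the commutation property (Theorem \ref{thm:commute}) on the top cell $f$, which yields $\ltproj{d-k-1}{f}\star\DIFF^k_{r,f}\uI{k}{f}\omega=\ltproj{d-k-1}{f}\star\DIFF\omega$ and hence (using $\ltproj{d-k-1}{f}\lproj{d-k-1}{f}=\ltproj{d-k-1}{f}$) that $\star\tilde G$ is $L^2$-orthogonal to $\PLtrimmed{r}{d-k-1}(f)$. Decomposing $\star\tilde G=\DIFF a+\KOSZUL b$ along Lemma \ref{lemma:poly.dec}, this orthogonality annihilates the Koszul part against $\star\tilde G$, so that $\norm{f}{\tilde G}^2=\int_f\tilde G\wedge\DIFF a$ and, by the stability \eqref{eq:top.decomp}, $\norm{f}{\DIFF a}\approx\norm{f}{\tilde G}$. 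Choosing $a'\in\KOSZUL\PL{r}{d-k-1}(f)\subset\PLtrimmed{r+1}{d-k-2}(f)$ with $\DIFF a'=\DIFF a$, I would turn the discrete term $\int_f\DIFF^k_{r,f}\uI{k}{f}\omega\wedge\DIFF a'$ into a boundary integral via Lemma \ref{lem:link.sub} and the continuous term $\int_f\DIFF\omega\wedge\DIFF a'$ into a boundary integral via Stokes; the two bulk contributions cancel because $\DIFF\omega-\star^{-1}\lproj{d-k-1}{f}\star\DIFF\omega$ is $L^2$-orthogonal to $\star^{-1}\PL{r}{d-k-1}(f)\ni\star^{-1}\DIFF a'$. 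What survives is $\int_\pf(\DIFF^k_{r,\pf}\uI{k}{f}\omega-\tr_\pf\DIFF\omega)\wedge\tr_\pf a'$, estimated face by face through the induction hypothesis \eqref{eq:consistency.d} on the $(d-1)$-cells (using $\tr_{f'}\DIFF\omega=\DIFF\tr_{f'}\omega$ and the locality $(\uI{k}{f}\omega)_{f'}=\uI{k}{f'}\tr_{f'}\omega$); the polynomial factor $\tr_\pf a'$ is absorbed by the discrete trace inequality \eqref{eq:tr.poly} and the Koszul Poincaré inequality \eqref{eq:bound.d}, the resulting half-powers of $h_f$ being compensated exactly by the $h_f^{(d-d')/2}$ weights in \eqref{eq:def.scaled.norm} (together with $h_{f'}\approx h_f$ and the bounded number of faces). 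I would run this for every order $t\le r$, since the lower-order version is needed below.

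For \eqref{eq:consistency.P} I would test $E$ against the decomposition $\PL{r}{d-k}(f)=\DIFF\PL{r+1}{d-k-1}(f)\oplus\KOSZUL\PL{r-1}{d-k+1}(f)$. Against the Koszul part, the defining relation \eqref{eq:def.P} together with the fact that the trimmed and full $L^2$-projections agree on $\KOSZUL\PL{r-1}{d-k+1}(f)\subset\PLtrimmed{r}{d-k}(f)\subset\PL{r}{d-k}(f)$ shows that $\star E$ is $L^2$-orthogonal to it; as before this reduces, via \eqref{eq:top.decomp}, to testing against $\DIFF a$ with $a\in\KOSZUL\PL{r}{d-k}(f)$, giving $\norm{f}{E}^2=\int_f E\wedge\DIFF a$ with $\norm{f}{\DIFF a}\approx\norm{f}{E}$. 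Inserting \eqref{eq:def.P} for the discrete potential and Stokes for the continuous form, $\int_f E\wedge\DIFF a$ splits into a bulk term $\int_f(\DIFF^k_{r,f}\uI{k}{f}\omega-\DIFF\omega)\wedge a$ and a boundary term $\int_\pf(P^k_{r,\pf}\uI{k}{f}\omega-\tr_\pf\omega)\wedge\tr_\pf a$; the latter is treated exactly as in the previous paragraph, now through the induction hypothesis \eqref{eq:consistency.P} on the $(d-1)$-cells.

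The hard part is the bulk term $\int_f(\DIFF^k_{r,f}\uI{k}{f}\omega-\DIFF\omega)\wedge a$. In the flat DDR it would simply vanish by exact polynomial consistency; on a manifold this mechanism is unavailable because $\star$ does not preserve polynomials, and moreover $a\in\KOSZUL\PL{r}{d-k}(f)\subset\PL{r+1}{d-k-1}(f)$ has degree $r+1$, one too high to expand $\int_f\DIFF^k_{r,f}\uI{k}{f}\omega\wedge a$ through the definition \eqref{eq:def.d}. My resolution is to bound it by Cauchy--Schwarz, $\lvert\int_f(\DIFF^k_{r,f}\uI{k}{f}\omega-\DIFF\omega)\wedge a\rvert\le\norm{f}{\DIFF^k_{r,f}\uI{k}{f}\omega-\DIFF\omega}\,\norm{f}{a}$, and to invoke the \emph{lower-order} ($t=r-1$) case of \eqref{eq:consistency.d} already proven on $f$, namely $\norm{f}{\DIFF^k_{r,f}\uI{k}{f}\omega-\DIFF\omega}\lesssim h_f^{r}\seminorm{r,f,\Delta}{\DIFF\omega}\lesssim h_f^{r}\seminorm{r+1,f,\Delta}{\omega}$, which uses only $\omega\in H^{r+1}\Lambda^k(f;\Delta)$ (so that $\DIFF\omega\in H^{r}$); combined with $\norm{f}{a}\lesssim h_f\norm{f}{\DIFF a}\approx h_f\norm{f}{E}$ from \eqref{eq:bound.d}, this contributes $\lesssim h_f^{r+1}\norm{f}{E}\seminorm{r+1,f,\Delta}{\omega}$ and, after dividing by $\norm{f}{E}$, closes the estimate. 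The delicate points to get right are precisely that one must trade one order of accuracy for the regularity actually available on $\omega$ (rather than on $\DIFF\omega$), and that the half-integer powers of $h_f$ arising from the trace and Poincaré inequalities must cancel against the scaled-seminorm weights so that no spurious factor of $h_f^{\pm 1/2}$ survives; the special cases $d-k=1$ and $l=0$ in Lemma \ref{lemma:poly.dec} are handled separately but do not alter this structure.
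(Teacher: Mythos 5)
Your induction architecture (base case via \eqref{eq:approx}, boundary residuals handled by the induction hypothesis together with \eqref{eq:tr.poly}, \eqref{eq:bound.d}, \eqref{eq:top.decomp} and the weights in \eqref{eq:def.scaled.norm}) matches the paper's, but your route to \eqref{eq:consistency.d} is genuinely different: you prove the optimal estimate directly, using Theorem \ref{thm:commute} to get orthogonality of $\star\DIFF^k_{r,f}\uI{k}{f}\omega-\lproj{d-k-1}{f}\star\DIFF\omega$ to $\PLtrimmed{r}{d-k-1}(f)$ and Lemma \ref{lem:link.sub} to push everything to the boundary. The paper instead first proves only the sub-optimal bound \eqref{eq:consistency.d.sub} (obtained from the definition \eqref{eq:def.d}, the projector property and Stokes, with no commutation), then proves \eqref{eq:consistency.P}, and finally gets the optimal \eqref{eq:consistency.d} essentially for free by writing $\DIFF^k_{r,f}\uI{k}{f}\omega=P^{k+1}_{r,f}\uI{k+1}{f}(\DIFF\omega)$ (commutation \eqref{eq:commutation} combined with \eqref{eq:link.P.d}) and applying \eqref{eq:consistency.P} at level $k+1$ to $\DIFF\omega$. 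Under the hypotheses of \eqref{eq:consistency.d}, where $\omega\in C^1\Lambda^k(\overline{f})$, your argument for that estimate is sound (including the degenerate case $d=k+1$), and you correctly identified the crux of \eqref{eq:consistency.P}: the bulk term must be paid for with one order of $h_f$ and the regularity actually available on $\omega$, not on $\DIFF\omega$.

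There is, however, a genuine gap in your proof of \eqref{eq:consistency.P}. That estimate is claimed for every $\omega\in H^{r+1}\Lambda^k(f;\Delta)$, and such forms need not be $C^1$ (nor even continuous in the interior of $f$ when $r$ is small). Your control of the bulk term $\int_f(\DIFF^k_{r,f}\uI{k}{f}\omega-\DIFF\omega)\wedge a$ rests on the lower-order version of your $\DIFF$-estimate on $f$ itself, and every $\DIFF$-estimate in your scheme starts from the commutation property; but Theorem \ref{thm:commute} is stated, and proved in the reference it cites, only for $\omega\in C^1\Lambda^k(\overline{f})$. Your parenthetical claim that this step uses only $\omega\in H^{r+1}\Lambda^k(f;\Delta)$ is therefore not justified by your own construction: as written, your induction establishes \eqref{eq:consistency.P} only for $C^1$ forms, which is strictly weaker than the statement, and a density argument is not a one-line fix (one would have to establish continuity of $\omega\mapsto P^k_{r,f}\uI{k}{f}\omega$ and of both sides of the bound with respect to the norm of $H^{r+1}\Lambda^k(f;\Delta)$). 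Two repairs are available: either derive the needed bound $\norm{f}{\DIFF^k_{r,f}\uI{k}{f}\omega-\DIFF\omega}\lesssim h_f^{r}\seminorm{r+1,f,\Delta}{\omega}$ without commutation, directly from \eqref{eq:def.d}, the projector property and Stokes --- this is exactly the paper's \eqref{eq:consistency.d.sub}, valid at the $H^{r+1}\Lambda^k(f;\Delta)$ regularity level, after which the rest of your argument goes through unchanged --- or prove that the commutation property extends to such forms, which is plausible (its proof is essentially integration by parts) but is an additional statement the paper does not provide.
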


\begin{proof}
  The proof if done by induction on $d$.  
  If $d=k$, then both discrete and continuous exterior derivatives vanish, so \eqref{eq:consistency.d} is trivially satisfied.
  The definitions \eqref{eq:def.I} and \eqref{eq:def.P.d=k} of $\uI{k}{h}$ and $P^k_{r,f}$ combined with the assumed approximation property \eqref{eq:approx} of the $L^2$-projector on polynomial spaces readily gives \eqref{eq:consistency.P}.
  
  Let us now take $d\ge k+1$ and assume that the result holds for $d-1$. Let $f\in\FM{d}(\Mh)$. We first prove a weaker (sub-optimal) version of \eqref{eq:consistency.d}, from which we deduce \eqref{eq:consistency.P} which, finally, gives the optimal estimate \eqref{eq:consistency.d} itself.
  
  The definitions \eqref{eq:def.d} of the discrete exterior derivative and \eqref{eq:def.I} of the interpolator yield,
  for all $\mu_f\in\PL{r}{d-k-1}(f)$, 
  \begin{align}
    \int_f \DIFF^k_{r,f} \uI{k}{f} \omega \wedge \mu_f 
    &= (-1)^{k+1} \int_f \star^{-1}\ltproj{d-k}{f}\star\omega \wedge\DIFF\mu_f
    + \int_\pf P^k_{r,\pf} \uI{k}{\pf}\omega\wedge\tr_\pf\mu_f\nonumber\\
      &=
      (-1)^{k+1}\int_f \omega\wedge\DIFF\mu_f + \int_\pf\tr_\pf\omega\wedge\tr_\pf\mu_f \nonumber\\
      &\quad + \int_\pf(P^k_{r,\pf} \uI{k}{\pf}\omega-\tr_\pf\omega)\wedge\tr_\pf\mu_f \nonumber\\
      &= \int_f\DIFF\omega\wedge\mu_f 
      + \int_\pf(P^k_{r,\pf} \uI{k}{\pf}\omega-\tr_\pf\omega)\wedge\tr_\pf\mu_f
      \label{eq:cons.d.sub1}
  \end{align}
  where, in the second equality, we have used the fact that $\DIFF\mu_f\in\PLtrimmed{r}{d-k}(f)$ together with the definition of the orthogonal projector $\ltproj{d-k}{f}$ to remove $\star^{-1}\ltproj{d-k}{f}\star$ (see \cite[Lemma 1]{Bonaldi.Di-Pietro.ea:23} for details), and we have used the Stokes formula in the conclusion.
  Subtracting 
  \[
    \int_f\DIFF\omega\wedge\mu_f = \int_f \star^{-1}\lproj{d-k-1}{f}\star\DIFF\omega \wedge\mu_f
 \]
 (the equality following from the definition of $\lproj{d-k-1}{f}$ and the fact that $\mu_f\in \PL{r}{d-k-1}(f)$)
 to both sides of \eqref{eq:cons.d.sub1} and using a Cauchy-Schwarz inequality then gives
  \begin{align}
      \int_f (\DIFF^k_{r,f} \uI{k}{f} \omega - \star^{-1}\lproj{d-k-1}{f}\star\DIFF\omega)\wedge \mu_f 
      &\leq \norm{\pf}{P^k_{r,\pf} \uI{k}{\pf}\omega-\tr_\pf\omega}\norm{\pf}{\tr_\pf\mu_f}\nonumber \\
      &\lesssim 
      h_f^{r+1}\seminorm{r+1,\pf,\Delta}{\tr_\pf\omega}\norm{\pf}{\tr_\pf\mu_f}\nonumber\\
      &\lesssim 
      h_f^{r}\seminorm{r+1,f,\Delta}{\omega}\norm{f}{\mu_f}, \label{eq:pc.P.0}
  \end{align}
  where we have used the induction hypothesis \eqref{eq:consistency.P} on all $f'\in\pf$ (the notation $\seminorm{r+1,\pf,\Delta}{{\cdot}}$ stands for $\sum_{f'\in\FM{d-1}(f)}\seminorm{r+1,f',\Delta}{{\cdot}}$) and the conclusion follows from the discrete trace inequality \eqref{eq:tr.poly} together with the definition \eqref{eq:def.scaled.norm} of the scaled norm, which gives
  \begin{equation}\label{eq:hierarchichal.scaled} 
    \seminorm{r+1,\pf,\Delta}{\tr_{\pf}\omega}\lesssim h_f^{-1/2}\seminorm{r+1,f,\Delta}{\omega}.
  \end{equation}
  With $\mu_f = \star\DIFF^k_{r,f} \uI{k}{f} \omega - \lproj{d-k-1}{f}\star\DIFF\omega\in\PL{r}{d-k-1}(f)$, the left-hand side of \eqref{eq:pc.P.0} is equal to $\norm{f}{\DIFF^k_{r,f} \uI{k}{f} \omega - \star^{-1}\lproj{d-k-1}{f}\star\DIFF\omega}^2$. Simplifying, we therefore obtain
  \[
    \norm{f}{\DIFF^k_{r,f} \uI{k}{f} \omega - \star^{-1}\lproj{d-k-1}{f}\star\DIFF\omega}
      \lesssim
      h_f^{r}\seminorm{r+1,f,\Delta}{\omega}.
  \]
  Introducing $\pm \DIFF\omega$ and using a triangle inequality together with the approximation property \eqref{eq:approx} (with $\DIFF\omega$ instead of $\omega$ and $t=r-1$ if $r\ge 1$; if $r=0$, we simply use the $L^2(f)$-boundedness of $\lproj{d-k-1}{f}$), we infer the following sub-optimal version of \eqref{eq:consistency.d}:
  \begin{equation}\label{eq:consistency.d.sub}
    \norm{f}{\DIFF^k_{r,f} \uI{k}{f} \omega - \DIFF\omega}
      \lesssim
      h_f^{r}\seminorm{r+1,f,\Delta}{\omega}.
  \end{equation}
  We can now prove \eqref{eq:consistency.P}. 
  For all $\mu_f\in\KOSZUL\PL{r}{d-k}(f)$ and $\nu_f\in\KOSZUL\PL{r-1}{d-k+1}(f)$, 
  the definitions \eqref{eq:def.P} of $P^k_{r,f}$ and \eqref{eq:def.I} of $\uI{k}{f}$ yield
  \begin{equation*}
    \begin{aligned}
      (-1)^{k+1}&\int_f P^k_{r,f}\uI{k}{f}\omega\wedge(\DIFF\mu_f+\nu_f)\\
      &= \int_f\DIFF^k_{r,f}\uI{k}{f}\omega\wedge\mu_f-\int_\pf P^k_{r,\pf}\uI{k}{\pf}\omega\wedge\tr_\pf\mu_f
      +(-1)^{k+1}\int_f \star^{-1}\ltproj{d-k}{f}\star\omega\wedge\nu_f \\
      \overset{\nu_f\in\PLtrimmed{r}{d-k}(f)}
      &= \int_f\DIFF\omega\wedge\mu_f -\int_\pf\tr_\pf\omega\wedge\tr_\pf\mu_f 
      + (-1)^{k+1}\int_f\omega\wedge\nu_f\\
      &\quad- \int_\pf(P^k_{r,\pf}\uI{k}{\pf}\omega-\tr_\pf\omega)\wedge\tr_\pf\mu_f 
      + \int_f(\DIFF^k_{r,f}\uI{k}{f}\omega-\DIFF\omega)\wedge\mu_f\\
      &= (-1)^{k+1}\int_f\omega\wedge(\DIFF\mu_f+\nu_f)
      - \int_\pf(P^k_{r,\pf}\uI{k}{\pf}\omega-\tr_\pf\omega)\wedge\tr_\pf\mu_f \\
      &\quad+ \int_f(\DIFF^k_{r,f}\uI{k}{f}\omega-\DIFF\omega)\wedge\mu_f,
    \end{aligned}
  \end{equation*}
  the conclusion following from the Stokes formula.
  A Cauchy-Schwarz inequality then gives
  \begin{align}
      (-1)^{k+1}&\int_f (P^k_{r,f}\uI{k}{f}\omega-\omega)\wedge(\DIFF\mu_f+\nu_f)\nonumber\\
      &\leq \norm{\pf}{P^k_{r,\pf}\uI{k}{\pf}\omega-\tr_\pf\omega}\norm{\pf}{\tr_\pf\mu_f}
      + \norm{f}{\DIFF^k_{r,f}\uI{k}{f}\omega-\DIFF\omega}\norm{f}{\mu_f}\nonumber\\
      &\lesssim
      h_f^{r+1}\seminorm{r+1,\pf,\Delta}{\tr_\pf\omega}\norm{\pf}{\tr_\pf\mu_f}
      + h_f^r \seminorm{r+1,f,\Delta}{\omega}\norm{f}{\mu_f} \nonumber\\
      &\lesssim 
      h_f^r \seminorm{r+1,f,\Delta}{\omega}\norm{f}{\mu_f},
      \label{eq:pc.P.2}
  \end{align}
  where we have used \eqref{eq:consistency.P} for $d-1$ (induction hypothesis) together with \eqref{eq:consistency.d.sub} in the second inequality, and the discrete trace inequality \eqref{eq:tr.poly} together with \eqref{eq:hierarchichal.scaled} in the conclusion.
  By Lemma \ref{lemma:poly.dec}, we can take $\mu_f$ and $\nu_f$ such that $\DIFF\mu_f + \nu_f = (-1)^{k+1} (\star P^k_{r,f}\uI{k}{f}\omega-\lproj{d-k}{f}\star\omega)$, and \eqref{eq:bound.d} and \eqref{eq:top.decomp} ensure that
  \begin{equation}\label{eq:pc.P.3}
    \norm{f}{\mu_f} \lesssim h_f\norm{f}{\DIFF\mu_f}
    \lesssim h_f\norm{f}{\star P^k_{r,f}\uI{k}{f}\omega-\lproj{d-k}{f}\star\omega}=h_f\norm{f}{P^k_{r,f}\uI{k}{f}\omega-\star^{-1}\lproj{d-k}{f}\star\omega}.
  \end{equation}
  We then write
  \begin{equation*}
    \begin{aligned}
      \norm{f}{P^k_{r,f}&\uI{k}{f}\omega-\star^{-1}\lproj{d-k}{f}\star\omega}^2\\
     &= \int_f(P^k_{r,f}\uI{k}{f}\omega-\star^{-1}\lproj{d-k}{f}\star\omega)\wedge(\DIFF\mu_f+\nu_f)\\
     &= \int_f(P^k_{r,f}\uI{k}{f}\omega-\omega)\wedge(\DIFF\mu_f+\nu_f)
      + \int_f(\omega-\star^{-1}\lproj{d-k}{f}\star\omega)\wedge(\DIFF\mu_f+\nu_f)\\
     \overset{\eqref{eq:pc.P.2}}&\lesssim
        h_f^r\seminorm{r+1,f,\Delta}{\omega}\norm{f}{\mu_f}
        + \norm{f}{\omega-\star^{-1}\lproj{d-k}{f}\star\omega}\norm{f}{\DIFF\mu_f+\nu_f} \\
      \overset{\eqref{eq:pc.P.3},\eqref{eq:approx}}&\lesssim
      h_f^{r+1}\seminorm{r+1,f,\Delta}{\omega}\norm{f}{P^k_{r,f}\uI{k}{f}\omega-\star^{-1}\lproj{d-k}{f}\star\omega}.
    \end{aligned}
  \end{equation*}
  Simplifying gives $\norm{f}{P^k_{r,f}\uI{k}{f}\omega-\star^{-1}\lproj{d-k}{f}\star\omega}\lesssim h_f^{r+1}\seminorm{r+1,f,\Delta}{\omega}$ and the bound \eqref{eq:consistency.P} then follows by invoking \eqref{eq:approx} with $t=r$ and the triangle inequality
  \[
    \norm{f}{P^k_{r,f}\uI{k}{f}\omega-\omega} \leq 
    \norm{f}{P^k_{r,f}\uI{k}{f}\omega-\star^{-1}\lproj{d-k}{f}\star\omega}
    + \norm{f}{\star^{-1}\lproj{d-k}{f}\star\omega-\omega}.
  \]
  
  Finally, \eqref{eq:consistency.d} follows by applying \eqref{eq:consistency.P} to $\DIFF\omega$ and $k+1$ instead of $\omega$ and $k$, and by using the commutation property \eqref{eq:commutation} and the link \eqref{eq:link.P.d} (also with $k+1$ instead of $k$) between potential reconstruction and discrete exterior derivative.
\end{proof}

\begin{remark}[Polynomial consistency] \label{rem:polynomial.consistency}
Polynomial consistency is the property: $P^k_{r,f} \uI{k}{f} \omega=\omega$ for all $\omega\in\PL{r}{k}(f)$.
As shown in \cite[Section 3.5]{Bonaldi.Di-Pietro.ea:23}, the initial step for proving this property for the DDR complex on the Euclidean space consists in considering $d=k$ and in writing $\star^{-1}\lproj{d-k}{f}\star\omega=\star^{-1}\star\omega=\omega$; the removal of the $L^2$-projector $\lproj{d-k}{f}$ is justified since, when the metric is constant, $\star\omega$ is a polynomial form of the same degree as $\omega$. However, in the context of DDR on a manifold, $\star\omega$ may no longer be polynomial of the same degree as $\omega$ since the $\star$ operator involves the coefficients of the (non-constant) metric.
Hence, polynomial consistency for the DDR on a manifold does not seem readily accessible. This does not prevent the method, however, from having optimal primal consistency on smooth forms, as demonstrated by Theorem \ref{thm:primal.consistency}.
\end{remark}

\begin{remark}[About the regularity requirement on $\omega$]
The space $H^{r+1}\Lambda^k(f;\Delta)$, with its assumed regularity of traces of functions on lower-dimensional cells, is the natural one to state the consistency estimates in Theorem \ref{thm:primal.consistency}. Classical spaces of differential forms can easily be embedded into $H^{r+1}\Lambda^k(f;\Delta)$.

For example, letting $C^{r+1}\Lambda^k(\overline{f})$ be the space of $k$-forms that are continuous over $\overline{f}$ along with all their derivatives up to order $r+1$, and considering the semi-norm
$$
\seminorm{C^{r+1}\Lambda^k(f)}{\omega}=\max_{\alpha\in\Natural^d,\,|\alpha|=r+1}\norm{L^\infty\Lambda^k(f)}{\partial^\alpha\omega},
$$
we have $C^{r+1}\Lambda^k(\overline{f})\subset H^{r+1}\Lambda^k(f;\Delta)$ and
\[
\seminorm{r+1,f,\Delta}{\omega}\lesssim |f|^{1/2}\seminorm{C^{r+1}\Lambda^k(f)}{\omega}\quad\forall\omega\in C^{r+1}\Lambda^k(\overline{f}).
\]
Likewise, repeated uses of the continuous trace inequality
\[
\norm{\pf'}{\tr_{\pf'}\mu} \lesssim h_{f'}^{-\frac12}\norm{f'}{\mu} + h_{f'}^\frac12 \seminorm{{H^{1}\Lambda^k(f')}}{\mu}, 
    \quad \forall\mu\in H^1\Lambda^k(f')\,,\quad\forall f'\in\Delta_{d'}(\Mh)\,,\quad\forall d'\in [k,d]
\]
  show that $H^{r+1+d-k}\Lambda^k(f)\subset H^{r+1}\Lambda^k(f;\Delta)$ with
\[
\seminorm{r+1,f,\Delta}{\omega}\lesssim \sum_{i=0}^{d-k} h_f^{i} \seminorm{H^{r+1+i}\Lambda^k(f)}{\omega}\quad\forall\omega\in H^{r+1+d-k}\Lambda^k(f).
\]
\end{remark}

\section{Construction of local polynomial spaces on manifolds}\label{sec:construction.polynomial}

The notion of polynomial depends on the choice of coordinates.
In general, a manifold cannot be covered with a single chart.
Therefore, if polynomial spaces are to be designed on a manifold, their definition needs to be coherent with (some) changes of coordinates.
We only need our polynomial spaces to be locally defined on $d$-cells, 
but we also need to ensure that restrictions of polynomials to the boundary of $d$-cells are polynomials on $(d-1)$-cells 
(of the same or lower degree), for all $d=1,\ldots,n$.

We prove in this section that, if Assumption \ref{assumption:straighten} below holds, then suitable local polynomial spaces can be constructed.
This assumption essentially states that, when read in the coordinates chosen on a $d$-cell $f$, the subcells of $f$ are linearly embedded in $f$.
Appendix \ref{sec:example.construction} shows how to practically construct mappings satisfying this assumption in dimension $n=2$ and for various cell shapes.
Notice that, if a cell is a polytope in all the charts of the chosen atlas on the manifold, 
then Assumption \ref{assumption:straighten} is trivially satisfied.
Therefore, we can automatically build a basis on cells that are flat shaped in all charts (even if the metric is not trivial). 
Only cells that are not flat polytopes in one of the atlas' charts require a special attention; these cells usually lie at the transition between two charts.

\begin{assumption}[Affine compatible local coordinates]\label{assumption:straighten}
For all $0 \leq d \leq n$ and $f \in \FM{d}(\Mh)$, there is a $C^2$-diffeomorphism $I_f:U_f\subset\Real^d\to \overline{f}$. Moreover,
for all $f' \in \FM{d-1}(f)$, setting $J_f := (I_f)^{-1}$ and denoting by $\INJ_{f,f'}\st f' \to \overline{f}$ the inclusion of $f'$ into $\overline{f}$,
the mapping $\TRF_{f,f'} := J_f \circ \INJ_{f,f'} \circ I_{f'}:U_{f'}\to \Real^d$ is affine.
\end{assumption}
  This assumption essentially states that each cell has a suitable coordinate system (represented by the diffeomorphisms $I_f$) such that, for
  each $(d-1)$-cell $f'$ of $f$, the coordinate system of $f'$ is affine in the coordinates of $f$. This key property ensures that, when defining
  polynomial functions through these coordinate systems (as in Definition \ref{def:poly.spaces.koszul}), the restriction to $f'$ of polynomials on $f$
  are polynomial on $f'$ (see Lemmas \ref{lemma:polymanifold.trPL} and \ref{lem:trace.trimmed}), which is one of the essential properties required on local polynomial spaces to build the DDR sequence (and potentially other polytopal schemes).

  Appendix \ref{sec:example.construction} gives a possible practical approach to construct local coordinates (that is, parametrisations) that satisfy Assumption \ref{assumption:straighten}, by a hierarchical construction: starting from parametrisations of the $1$-cells (edges), 
  parametrisations of $2$-cells are constructed using the coordinates originating from two of its edges, in an explicit way and taking care of respecting the compatibility with the coordinates on the other edges. Note that this construction builds at the same time a flat polytopal mesh
  that is equivalent to the manifold mesh in the sense of Definition \ref{def:equiv.meshes}
  (provided the diffeomorphisms $(J_f)_{f\in\Delta_d(\Mh),\,d\in [0,n]}$ satisfy \eqref{eq:shape.reg}); hence, following the process in Appendix \ref{sec:example.construction}, we not only satisfy Assumptions \ref{assumption:homogeneous} and \ref{assumption:tracetrimmed} (directly implied by Assumption \ref{assumption:straighten} as shown in Lemmas  \ref{lemma:polymanifold.trPL} and \ref{lem:trace.trimmed}), but also Assumption \ref{assumption:local.spaces} (see Lemma \ref{lem:local.bounds}).
  Only the approximation properties of Assumption \ref{assumption:local.approx} must be checked independently.

  In the following, we detail the construction of the polynomial spaces and the implications of Assumption \ref{assumption:straighten}.
We denote by $\PL{r}{l}(\Real^d)$, the set of $l$-forms with coefficients that are polynomial on $\Real^d$ of degree at most $r$,
and $\HL{r}{l}(\Real^d)$ the corresponding space of homogeneous polynomial forms of degree $r$.
We also denote by $X_d$ the identity vector field on $\Real^d$, 
that is, $X_d(\bvec{x}) = \bvec{x}$, for all $\bvec{x} \in \Real^d$.
We will often take the pullback of these object with functions that are not surjective on $\Real^d$ (which is not an issue).
Finally, recall that the interior product of a vector $X$ with an $\ell$-alternating form $\mu$ is the $(\ell-1)$-alternating form given by 
\begin{equation}\label{eq:contraction}
\contr{X}\mu(v_1,\ldots,v_{\ell-1})=\mu(X,v_1,\ldots,v_{\ell-1}).
\end{equation}

\begin{definition}[Polynomial spaces and Koszul operator via local charts]\label{def:poly.spaces.koszul}
  Under Assumption \ref{assumption:straighten}, we define
  the polynomial space on $f$ by:
  \begin{align}
    \HL{r}{l}(f) :={}& J_f^*\HL{r}{l}(\Real^d) \label{eq:def.HLf},\\
    \PL{r}{l}(f) :={}& \bigoplus_{s \leq r} \HL{s}{l}(f)\label{eq:def.PLf},\\
    \KOSZUL_f :={}& \contr{X_{f}}\mbox{}, \text{ with } X_{f} := {(I_f)}_*X_d .\label{eq:def.kf}
  \end{align}
  In particular, we also have $\PL{r}{l}(f)=J_f^*\PL{r}{l}(\Real^d)$.
\end{definition}

Let us check that this construction satisfies Assumption \ref{assumption:homogeneous} and \ref{assumption:tracetrimmed}.
Moreover, if we assume that the diffeomorphisms $J_f$ are regular, 
then this construction also satisfies Assumption \ref{assumption:local.spaces}.

\begin{lemma}[Affine compatible local coordinates build homogeneous polynomials]
  Assumption \ref{assumption:straighten} implies Assumption \ref{assumption:homogeneous}.
\end{lemma}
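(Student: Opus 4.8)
The plan is to exploit the fact that, by Definition \ref{def:poly.spaces.koszul}, every object on a cell $f$ is the pullback by $J_f$ of its Euclidean counterpart, so that Assumption \ref{assumption:homogeneous} reduces to standard properties of polynomial forms on $\Real^d$ via the naturality of $\DIFF$ and of the interior product. Concretely, I would first record a conjugation identity for $\DIFF\KOSZUL_f+\KOSZUL_f\DIFF$. Since $\DIFF$ commutes with any pullback, since the interior product is natural under the diffeomorphism $I_f$ — for the pushed-forward field $X_f=(I_f)_*X_d$ one has $I_f^*(\contr{X_f}\mu)=\contr{X_d}(I_f^*\mu)$ — and since $J_f^*=(I_f^*)^{-1}$, the definition \eqref{eq:def.kf} reads $\KOSZUL_f=J_f^*\,\contr{X_d}\,I_f^*$. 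Combining these, on forms over $f$ one obtains
\[
  \DIFF\KOSZUL_f+\KOSZUL_f\DIFF = J_f^*\big(\DIFF\,\contr{X_d}+\contr{X_d}\,\DIFF\big)I_f^* = J_f^*\,\Lie_{X_d}\,I_f^*,
\]
the last equality being Cartan's magic formula for the Euclidean Euler field $X_d$.

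The second step is the Euclidean homotopy identity: on $\HL{s}{l}(\Real^d)$ the Lie derivative $\Lie_{X_d}$ acts as the homothety $(s+l)\,\mathrm{Id}$, the factor $s$ coming from Euler's relation on the homogeneous coefficients and the factor $l$ from the $l$ basis covectors $\DIFF x^{i}$. As $\HL{s}{l}(f)=J_f^*\HL{s}{l}(\Real^d)$ by \eqref{eq:def.HLf} and $I_f^*J_f^*=\mathrm{Id}$, the conjugation identity immediately gives $(\DIFF\KOSZUL_f+\KOSZUL_f\DIFF)p=(s+l)p$ for every $p\in\HL{s}{l}(f)$, so one takes $\lambda_{s,l}=s+l$. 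This establishes the homothety statement of \ref{A:homogeneous.polynomials}; the direct-sum decomposition $\PL{r}{l}(f)=\bigoplus_{s\le r}\HL{s}{l}(f)$ of \eqref{eq:def.PLf} is inherited from the corresponding Euclidean decomposition through the linear isomorphism $J_f^*$, and the $s<0$ convention is adopted verbatim since the spaces vanish there.

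The third step verifies \ref{A:eigenvalues} and \ref{A:d.non.zero}, both elementary once $\lambda_{s,l}=s+l$ is known: for fixed $l$ the map $s\mapsto s+l$ is injective on $s\ge0$, and $s+l=0$ with $s,l\ge0$ forces $(s,l)=(0,0)$, which is exactly \ref{A:eigenvalues}. For \ref{A:d.non.zero}, the injectivity of $J_f^*$ and the relation $\DIFF J_f^*=J_f^*\DIFF$ reduce the claim to $\DIFF\HL{s}{l}(\Real^d)\neq\{0\}$ when $s>0$ and $l<d$; here I would exhibit the explicit form $(x^1)^{s}\,\DIFF x^{I}$ with $I$ an $l$-index avoiding $1$ (available precisely because $l<d$), whose differential $s\,(x^1)^{s-1}\,\DIFF x^1\wedge\DIFF x^{I}$ is nonzero. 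The same conjugation and naturality arguments simultaneously yield the ambient complex properties \eqref{eq:polynomial.complex.diff} and \eqref{eq:polynomial.complex.koszul}, since in $\Real^d$ the operator $\DIFF$ lowers and $\contr{X_d}$ raises the polynomial degree by one while $\DIFF^2=0=\contr{X_d}^2$.

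I expect the only genuinely delicate point to be the bookkeeping in the conjugation identity: keeping the opposite directions of $I_f$ and $J_f=(I_f)^{-1}$ straight and correctly invoking the naturality of $\contr{X_f}$ under pushforward of the vector field. Once that identity is in place, every remaining verification is a transcription of a standard Euclidean fact. I would also emphasise that the affine-compatibility part of Assumption \ref{assumption:straighten} (the maps $\TRF_{f,f'}$) is \emph{not} used here — only the existence of the per-cell diffeomorphism $I_f$ matters — the subcell compatibility being reserved for the trace statement of Assumption \ref{assumption:tracetrimmed}.
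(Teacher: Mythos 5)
Your proof is correct and follows essentially the same route as the paper's: both rest on the conjugation identity $\KOSZUL_f = J_f^*\,\contr{X_d}\,I_f^*$ (naturality of the interior product under the diffeomorphism $I_f$), which transports the Euclidean homotopy formula with eigenvalue $\lambda_{s,l}=s+l$ onto the cell $f$, and then deduces \ref{A:eigenvalues} and \ref{A:d.non.zero} from the corresponding flat-space facts, exactly as the paper does. Your Cartan/Lie-derivative phrasing, the explicit witness $(x^1)^{s}\,\DIFF x^{I}$ for \ref{A:d.non.zero}, and the observation that the affine maps $\TRF_{f,f'}$ are not needed here are only cosmetic refinements of the same argument.
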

\begin{proof}
  We first prove that, for the polynomial spaces in Definition \ref{def:poly.spaces.koszul}, \eqref{eq:polynomial.complex.diff} and \eqref{eq:polynomial.complex.koszul} are well-defined complexes.
  Since the exterior derivative $\DIFF$ commutes with pullbacks, 
  \eqref{eq:def.HLf} and the standard inclusion $\DIFF \HL{s}{l}(\Real^d) \subset \HL{s-1}{l+1}(\Real^d)$ give, for any $(s,l)$,
  \begin{equation*} 
    \DIFF\HL{s}{l}(f)=\DIFF J_f^*\HL{s}{l}(\Real^d) = J_f^* \DIFF \HL{s}{l}(\Real^d) \subset J_f^*\HL{s-1}{l+1}(\Real^d)
    = \HL{s-1}{l+1}(f).
  \end{equation*}
  By taking the sum over $s\le r$ on both sides and recalling \eqref{eq:def.PLf},
  we infer that the spaces $(\PL{r}{l}(f))_{r\in\Zintegers,l\in\Zintegers}$ 
  form a complex for $\DIFF$.

  If $g$ is a diffeomorphism and $v$ a vector field, then it holds that 
  \begin{equation}\label{eq:commute.contraction.pull}
    \contr{(g^{-1})_* v} g^* = g^* \contr{v}.
  \end{equation}
  Applying this result to a generic $J_f^* \alpha \in \PL{r}{l}(f)$, with $\alpha\in\PL{r}{l}(\Real^d)$, and recalling the definition \eqref{eq:def.kf} of $\KOSZUL_f$ gives
  \begin{equation}\label{eq:pullback.interior}
    \KOSZUL_f J_f^* \alpha = \contr{(I_f)_*X_d}(J_f^*\alpha) = J_f^*(\contr{X_d} \alpha)
    \in J_f^* \PL{r+1}{l-1}(\Real^d) = \PL{r+1}{l-1}(f).
  \end{equation}
  This shows that the spaces $(\PL{r}{l}(f))_{r\in\Zintegers,l\in\Zintegers}$ 
  form a complex for $\KOSZUL_f$.
  
  Let us show that the graded decomposition 
  $\PL{r}{l}(f) = \bigoplus_{s \leq r} \HL{r}{l}(f)$
  satisfies Assumption \ref{A:homogeneous.polynomials}. 
  For any $p \in \HL{s}{l}(f)$, there is $\alpha \in \HL{s}{l}(\Real^d)$ such that
  $J_f^*\alpha = p$, and we have
  \[
    (\DIFF\KOSZUL_f + \KOSZUL_f\DIFF)p
    = (\DIFF\KOSZUL_f + \KOSZUL_f\DIFF)J_f^*\alpha
    = J_f^* (\DIFF \contr{X_d} + \contr{X_d}\DIFF) \alpha
    = J_f^* (\lambda_{s,l}\, \alpha) 
    = \lambda_{s,l}p,
  \]
  where we have used \eqref{eq:pullback.interior} (with $\alpha$ and $\DIFF\alpha$) together with the commutation of pull-back and exterior derivative to write the second equality, 
  and the fact that $\HL{s}{l}(\Real^d)$ is the eigenspace associated with the eigenvalue $\lambda_{s,l}:=s+l$
  for $\DIFF \contr{X_d} + \contr{X_d}\DIFF$ for the third equality.
  Hence, $\HL{s}{l}(f)$ is an eigenspace of $\DIFF\KOSZUL_f + \KOSZUL_f\DIFF$ for this eigenvalue $\lambda_{s,l}$.

  As we just saw, the eigenvalues $(\lambda_{s,l})_{s,l}$
  are the same as the eigenvalues on the flat space $\Real^d$, for which we know that Assumption \ref{A:eigenvalues} holds.

  Finally, let $(s,l)$ with $s > 0$ and $l < d$.
  There is $\alpha \in \HL{s}{l}(\Real^d)$ such that $\DIFF\alpha \neq 0$.
  Moreover, $\DIFF J_f^*\alpha = J_f^* \DIFF \alpha$, and because $J_f$ is an isomorphism, $J_f^* \DIFF \alpha \neq 0$.
  Thus, Assumption \ref{A:d.non.zero} holds.
\end{proof}
\begin{lemma}[Affine compatible local coordinates preserve traces of polynomials] \label{lemma:polymanifold.trPL}
  If Assumption \ref{assumption:straighten} holds then, for all $f\in\Delta_d(\Mh)$, $l\le d$ and $r\ge 0$, we have $\tr_{f'} \PL{r}{l}(f) \subset \PL{r}{l}(f')$ for all $f' \in \FM{d-1}(f)$.
\end{lemma}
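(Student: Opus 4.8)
The plan is to transport the whole statement back to the flat reference domain $U_{f'}\subset\Real^{d-1}$ of $f'$, where Assumption \ref{assumption:straighten} turns the trace into a pullback by the affine map $\TRF_{f,f'}$; since affine pullbacks do not raise polynomial degree, the claim will follow. The two structural facts I would rely on throughout are that the trace is pullback by the inclusion, $\tr_{f'}=\INJ_{f,f'}^*$, and the contravariant functoriality of pullback, $(\psi\circ\phi)^*=\phi^*\circ\psi^*$.

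Concretely, I would first fix $\omega\in\PL{r}{l}(f)$ and write $\omega=J_f^*\alpha$ for some $\alpha\in\PL{r}{l}(\Real^d)$, which is possible since $\PL{r}{l}(f)=J_f^*\PL{r}{l}(\Real^d)$ by Definition \ref{def:poly.spaces.koszul}. Next, using functoriality together with $\TRF_{f,f'}=J_f\circ\INJ_{f,f'}\circ I_{f'}$, I would compute
\[
  I_{f'}^*\,\tr_{f'}\omega = I_{f'}^*\,\INJ_{f,f'}^*\,J_f^*\alpha = (J_f\circ\INJ_{f,f'}\circ I_{f'})^*\alpha = \TRF_{f,f'}^*\alpha.
\]
By Assumption \ref{assumption:straighten} the map $\TRF_{f,f'}$ is affine, so $\TRF_{f,f'}^*\alpha\in\PL{r}{l}(\Real^{d-1})$ (this is the one substantive point, addressed below). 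Finally, applying $J_{f'}^*$ and using $J_{f'}^*\circ I_{f'}^*=\mathrm{Id}$ (valid because $J_{f'}=(I_{f'})^{-1}$), I would conclude
\[
  \tr_{f'}\omega = J_{f'}^*\,\TRF_{f,f'}^*\alpha \in J_{f'}^*\PL{r}{l}(\Real^{d-1}) = \PL{r}{l}(f'),
\]
which is exactly the assertion. The degenerate case $l=d>d-1$ is automatically covered, since then $\TRF_{f,f'}^*\alpha$ is a $d$-form on $\Real^{d-1}$, hence zero.

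The main point to verify is the parenthetical claim that an affine pullback maps $\PL{r}{l}(\Real^d)$ into $\PL{r}{l}(\Real^{d-1})$. Writing $\TRF_{f,f'}(\bvec{y})=M\bvec{y}+\bvec{b}$ and $\alpha=\sum_{|I|=l}p_I\,\DIFF x^I$ with each $p_I$ a polynomial of degree $\le r$, one has $\TRF_{f,f'}^*\DIFF x^i=\sum_j M_{ij}\,\DIFF y^j$ with \emph{constant} coefficients, while $p_I\circ\TRF_{f,f'}$ remains a polynomial of degree $\le r$, since precomposition with an affine map does not increase degree. Expanding the wedge products then shows that $\TRF_{f,f'}^*\alpha$ has polynomial coefficients of degree $\le r$. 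This is precisely the step where affineness of $\TRF_{f,f'}$—and not merely its smoothness—is indispensable, and it is the reason Assumption \ref{assumption:straighten} is formulated as it is: a general change of coordinates would destroy polynomiality. Beyond this observation and the bookkeeping of pullback conventions, I do not expect any genuine obstacle.
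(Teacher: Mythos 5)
Your proposal is correct and follows essentially the same route as the paper's proof: write $\omega=J_f^*\alpha$, use functoriality of pullback and the identity $J_f\circ\INJ_{f,f'}=\TRF_{f,f'}\circ J_{f'}$ to express $\tr_{f'}\omega=J_{f'}^*(\TRF_{f,f'}^*\alpha)$, and invoke affineness of $\TRF_{f,f'}$ to keep $\TRF_{f,f'}^*\alpha$ in $\PL{r}{l}(\Real^{d-1})$. Your explicit verification that an affine pullback preserves polynomial degree merely spells out what the paper states in one line (constancy of $\JAC\,\TRF_{f,f'}$), so there is no substantive difference.
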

\begin{proof}
  A polynomial form $\eta\in\PL{r}{l}(f)$ is written as $\eta=J_f^*\alpha$ for some $\alpha \in \PL{r}{l}(\Real^d)$.
  Then, noting that 
  \begin{equation}\label{eq:compo.J.I}
    J_f\circ\INJ_{f,f'}=J_f\circ\INJ_{f,f'}\circ I_{f'}\circ J_{f'}=\TRF_{f,f'}\circ J_{f'},
  \end{equation}
  we have $\tr_{f'}\eta=\INJ_{f,f'}^*\eta  = \INJ_{f,f'}^*J_f^*\alpha=J_{f'}^*(\TRF_{f,f'}^*\alpha)$.  By assumption $\TRF_{f,f'} $ is affine, so 
  $\JAC_{x_{f'}} \TRF_{f,f'}$ is constant.
  Therefore $\TRF_{f,f'}^*\alpha$ is polynomial  of degree at most $r$, 
  and belongs in $\PL{r}{l}(\Real^{d-1})$.
  Hence,
  $\tr_{f'}\eta \in J_{f'}^*\PL{r}{l}(\Real^{d-1})=\PL{r}{l}(f')$ and the proof is complete.
\end{proof}
\begin{lemma}[Affine compatible local coordinates preserve traces of trimmed polynomials]\label{lem:trace.trimmed}
  Assumption \ref{assumption:straighten} implies Assumption \ref{assumption:tracetrimmed}.
\end{lemma}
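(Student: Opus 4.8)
The plan is to transport the statement to Euclidean space through the charts of Definition \ref{def:poly.spaces.koszul}, where it reduces to the classical affine invariance of trimmed polynomial spaces. The first step is to observe that the chart pullbacks intertwine the trimmed spaces. Since $\DIFF$ commutes with pullbacks and, by \eqref{eq:pullback.interior}, $\KOSZUL_f J_f^* = J_f^*\contr{X_d}$, combining this with $\PL{r}{l}(f)=J_f^*\PL{r}{l}(\Real^d)$ and Definition \ref{def:poly.trimmed} yields $\PLtrimmed{r}{l}(f)=J_f^*\PLtrimmed{r}{l}(\Real^d)$, where $\PLtrimmed{r}{l}(\Real^m):=\DIFF\PL{r}{l-1}(\Real^m)\oplus\contr{X_m}\PL{r-1}{l+1}(\Real^m)$ denotes the Euclidean trimmed space; the same identity holds on every subcell.

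Next I would fix $f'\in\FM{d'}(f)$ and reduce to a purely Euclidean statement. Composing the codimension-one affine maps of Assumption \ref{assumption:straighten} along a chain of nested subcells from $f$ down to $f'$ (inserting $I_g\circ J_g=\mathrm{Id}$ as in \eqref{eq:compo.J.I}) shows that $T:=\TRF_{f,f'}:U_{f'}\subset\Real^{d'}\to\Real^d$ is affine for \emph{every} subcell $f'$, not only those of codimension one. Exactly as in the proof of Lemma \ref{lemma:polymanifold.trPL}, writing $\eta=J_f^*\alpha\in\PLtrimmed{r}{l}(f)$ with $\alpha\in\PLtrimmed{r}{l}(\Real^d)$ gives $\tr_{f'}\eta=J_{f'}^*(T^*\alpha)$, so the lemma follows once I establish the Euclidean affine invariance $T^*\PLtrimmed{r}{l}(\Real^d)\subset\PLtrimmed{r}{l}(\Real^{d'})$.

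For this last point, write $T(y)=Ay+b$ and set $c:=T(0)=b$. The crux, and the main obstacle of the whole proof, is that contraction does not commute with $T^*$ when $T$ carries a nonzero translation $b$: the radial field $X_{d'}$ is \emph{not} $T$-related to $X_d$. I would remove this obstruction with the classical fact that the Euclidean trimmed space does not depend on the base point used for the Koszul contraction, which lets me decompose $\alpha=\DIFF p+\contr{X^c}q$ relative to the shifted radial field $X^c(z):=z-c$, with $p\in\PL{r}{l-1}(\Real^d)$ and $q\in\PL{r-1}{l+1}(\Real^d)$. This base-point independence itself follows from $\contr{X^c}=\contr{X_d}-\contr{c}$, the fact that the constant contraction $\contr{c}$ preserves the polynomial degree, and the inclusion $\PL{r-1}{l}(\Real^d)\subset\PLtrimmed{r}{l}(\Real^d)$ recorded in the Remark after Definition \ref{def:poly.trimmed}. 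With this choice a direct computation gives $\JAC T\,X_{d'}(y)=Ay=X^c(T(y))$, i.e.\ $X_{d'}$ is $T$-related to $X^c$, so contraction now commutes with the pullback: $T^*(\contr{X^c}q)=\contr{X_{d'}}(T^*q)$. Since $T$ is affine, $T^*p\in\PL{r}{l-1}(\Real^{d'})$, $T^*q\in\PL{r-1}{l+1}(\Real^{d'})$ and $T^*\DIFF p=\DIFF(T^*p)$; collecting the two contributions yields $T^*\alpha\in\DIFF\PL{r}{l-1}(\Real^{d'})\oplus\contr{X_{d'}}\PL{r-1}{l+1}(\Real^{d'})=\PLtrimmed{r}{l}(\Real^{d'})$, as required. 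This Euclidean invariance is classical and may alternatively be cited from \cite{Arnold.Falk.ea:06,Arnold:18}.
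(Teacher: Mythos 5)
Your proof is correct, and its algebraic heart is the same as the paper's: the only obstruction is the translation part of the affine map $\TRF_{f,f'}$, and both arguments neutralise it through a constant contraction $\contr{C}$, which lowers the form degree while preserving the polynomial degree and therefore lands in the full polynomial space $\PL{r-1}{l}$, hence in the trimmed space. The organisation, however, differs. The paper stays on the manifold: it splits $\PLtrimmed{r}{l}(f)=\PL{r-1}{l}(f)+\KOSZUL_f\PL{r-1}{l+1}(f)$, handles the first summand by Lemma \ref{lemma:polymanifold.trPL}, and for the Koszul part performs a pointwise computation of $\tr_{f'}\KOSZUL_f(J_f^*\alpha)$ that produces exactly the identity $\tr_{f'}\KOSZUL_f(J_f^*\alpha)=\KOSZUL_{f'}\tr_{f'}(J_f^*\alpha)+\tr_{f'}J_f^*(\contr{C}\alpha)$, i.e.\ the translation is split off \emph{after} taking the trace. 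You instead split \emph{before} pulling back: you establish $\PLtrimmed{r}{l}(f)=J_f^*\PLtrimmed{r}{l}(\Real^d)$, reduce to the purely Euclidean statement $T^*\PLtrimmed{r}{l}(\Real^d)\subset\PLtrimmed{r}{l}(\Real^{d'})$ for affine $T$, and prove it by choosing the Koszul base point $c=T(0)$ so that $X_{d'}$ is $T$-related to $X^c$ and contraction commutes exactly with $T^*$. This buys you two things: the key step becomes a self-contained, citable classical fact of FEEC (affine invariance of trimmed spaces), and the arbitrary-codimension case of Assumption \ref{assumption:tracetrimmed} is handled explicitly by composing codimension-one affine maps, whereas the paper only writes the codimension-one case. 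What the paper's route buys is that it never needs the base-point-independence lemma (whose reverse inclusion, $\PL{r-1}{l}\subset\DIFF\PL{r}{l-1}+\contr{X^c}\PL{r-1}{l+1}$, you assert by symmetry/translation invariance but do not spell out) nor the identification of $\PLtrimmed{r}{l}(f)$ with a pulled-back Euclidean trimmed space. One small caveat in your write-up: for $l=0$ your formula-based Euclidean trimmed space $\contr{X_m}\PL{r-1}{1}(\Real^m)$ omits the constants, so that case should be excluded from the Koszul argument and settled directly by the paper's convention $\PLtrimmed{r}{0}=\PL{r}{0}$ together with Lemma \ref{lemma:polymanifold.trPL}.
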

\begin{proof}
  Let $f' \in \FM{d-1}(f)$.
  We need to prove that $\tr_{f'} \PLtrimmed{r}{l}(f) \subset \PLtrimmed{r}{l}(f')$.
  Since $\PLtrimmed{r}{l}(f) = \PL{r-1}{l}(f) + \KOSZUL_f \PL{r-1}{l+1}(f)$ by Lemma \ref{lemma:poly.dec} and Definition \ref{def:poly.trimmed}, 
  and $\tr_{f'} \PL{r-1}{l}(f) \subset \PL{r-1}{l}(f')$ by Lemma \ref{lemma:polymanifold.trPL}, 
  we only need to show that
  $\tr_{f'} \KOSZUL_f \PL{r-1}{l+1}(f) \subset \PL{r-1}{l}(f') + \KOSZUL_{f'} \PL{r-1}{l+1}(f')$.

  By assumption $\TRF_{f,f'} $ is affine, 
  so there is a constant $C \in \Real^{d}$, such that for all $\bvec{x} \in J_{f'}(f') \subset \Real^{d-1}$,
  \begin{align*}
    \TRF_{f,f'} (\bvec{x}) ={}& \JAC_{\bvec{x}} (\TRF_{f,f'}) \bvec{x} + C,\\
    X_d (\TRF_{f,f'} (\bvec{x})) ={}& \JAC_{\bvec{x}} (\TRF_{f,f'}) X_{d-1}(\bvec{x}) + C.
  \end{align*}
  For any $\alpha \in \PL{r-1}{l+1}(\Real^d)$, 
  any point $x_{f'} \in f'$, 
  and any $l$-uplet of vectors $(v,\dots) \in (T_{x_{f'}} f')^l$, 
  by \eqref{eq:commute.contraction.pull}, \eqref{eq:pullback.interior} and \eqref{eq:compo.J.I}, we have
  \begin{align*}
    (\INJ_{f,f'}^* \contr{X_{f}} (J_f^* \alpha))_{x_{f'}} (v,\dots) 
    ={}& (\underbrace{\INJ_{f,f'}^* J_{f}^*}_{=J_{f'}^*\TRF_{f,f'}^*}(\contr{X_d} \alpha))_{x_{f'}} (v,\dots)\\
    ={}& \alpha_{\TRF_{f,f'}(J_{f'}(x_{f'}))} ( X_d(\TRF_{f,f'} (J_{f'}(x_{f'}))), 
      \JAC_{J_{f'}(x_{f'})} (\TRF_{f,f'}) \JAC_{x_{f'}} J_{f'}v, \dots)\\
    ={}& \alpha_{\TRF_{f,f'}(J_{f'}(x_{f'}))} (\JAC_{x_{f'}} (\TRF_{f,f'}) X_{d-1}(J_{f'} (x_{f'})), 
       \JAC_{J_{f'}(x_{f'})} (\TRF_{f,f'}) \JAC_{x_{f'}} J_{f'}v, \dots)\\
    &\quad +  \alpha_{\TRF_{f,f'}(J_{f'}(x_{f'}))} (C, \JAC_{J_{f'}(x_{f'})} (\TRF_{f,f'}) \JAC_{x_{f'}} J_{f'}v, \dots)\\
    ={}& (\TRF_{f,f'}^* \alpha)_{J_{f'}(x_{f'})} (X_{d-1}(J_{f'} (x_{f'})),\JAC_{x_{f'}} J_{f'}v, \dots)
     +  J_{f'}^* \TRF_{f,f'}^* \contr{C} \alpha_{x_{f'}} (v, \dots)\\
    ={}& J_{f'}^* \contr{X_{d-1}} I_{f'}^* \INJ_{f,f'}^* J_f^* \alpha_{x_{f'}} (v, \dots)
       +   \INJ_{f,f'}^* J_f^* \contr{C} \alpha_{x_{f'}} (v, \dots)\\
    ={}& \underbrace{\contr{X_{f'}}}_{=\KOSZUL_{f'}} \INJ_{f,f'}^* J_f^* \alpha_{x_{f'}} (v, \dots)
       + \INJ_{f,f'}^* J_f^* \contr{C} \alpha_{x_{f'}} (v, \dots).
  \end{align*}
  Hence,
  \[
    \tr_{f'} J_f^* \alpha = \KOSZUL_{f'} \tr_{f'} J_f^* \alpha + \tr_{f'} J_f^* \contr{C} \alpha.
  \]
  Since $C$ is a constant, we have $\contr{C} \alpha \in \PL{r-1}{l}(\Real^{d})$,
  and Lemma \ref{lemma:polymanifold.trPL} yields
  $\tr_{f'} J_f^* \contr{C} \alpha \in \PL{r-1}{l}(f')$ and 
  $\tr_{f'} J_f^* \alpha \in \PL{r-1}{l+1}(f')$.
\end{proof}

\begin{lemma}[Local Poincar\'e and trace inequalities under affine compatible local coordinates]\label{lem:local.bounds}
  If Assumptions \ref{assumption:straighten} and \ref{assum:reg.seq} hold, and the diffeomorphisms $(J_f)_{f\in\Delta_d(\Mh),\,d\in [0,n]}$ satisfy \eqref{eq:shape.reg}, 
  then Assumption \ref{assumption:local.spaces} holds.
\end{lemma}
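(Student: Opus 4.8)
The plan is to prove the three bounds in Assumption \ref{assumption:local.spaces} — the Poincaré inequality on the Koszul complement \eqref{eq:bound.d}, the topological decomposition stability \eqref{eq:top.decomp}, and the discrete trace inequality \eqref{eq:tr.poly} — by transporting each to the flat reference configuration $U_f\subset\Real^d$ via the pullback $J_f^*$. The key observation is that, under Definition \ref{def:poly.spaces.koszul}, every polynomial form on $f$ is of the form $J_f^*\alpha$ for $\alpha$ a genuine polynomial form on $\Real^d$, and that the Koszul operator $\KOSZUL_f$ intertwines with the flat contraction $\contr{X_d}$ through $J_f^*$ by \eqref{eq:pullback.interior}. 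On the reference domain, all three inequalities are classical facts about polynomial forms (the flat Poincaré inequality on $\KOSZUL$-images, the equivalence of norms in the Koszul/exterior-derivative decomposition, and the discrete trace inequality for polynomials), available on the equivalent flat mesh $M_h$ whose regularity is guaranteed by Assumption \ref{assum:reg.seq}. The task is therefore to track how the constants and the powers of $h_f$ transform under pullback, using \eqref{eq:shape.reg} and \eqref{eq:hf.equiv} to control the Jacobian norms.

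\begin{proof}
Let $f\in\FM{d}(\Mh)$, set $\widehat f:=\Phi_d(f)$ and recall $\phi_f:f\to\widehat f$. We transport all forms to $\widehat f$, on which polynomial forms and the standard Koszul operator $\contr{X_d}$ live, and on which the three inequalities are known to hold with $h_f$-explicit constants by the regularity of $(M_h)_h$ (Assumption \ref{assum:reg.seq} and \cite[Definition 1.9]{Di-Pietro.Droniou:20}); these are precisely the flat analogues established in \cite{Bonaldi.Di-Pietro.ea:23}. The core estimate is the pair of norm equivalences
\begin{equation}\label{eq:pullback.L2.equiv}
\norm{f}{J_f^*\alpha}\approx \norm{\infty}{\nabla\phi_f}^{-\frac d2+l}\,\norm{\widehat f}{\alpha}\qquad\forall\alpha\in\PL{r}{l}(\widehat f),
\end{equation}
which follows from the change-of-variables formula: the volume element contributes a factor $\norm{\infty}{\det\nabla\phi_f}^{-1}$ and each of the $l$ covariant legs of an $l$-form contributes a factor $\norm{\infty}{\nabla\phi_f}$, with all factors comparable to powers of $\norm{\infty}{\nabla\phi_f}$ thanks to the isotropy relations \eqref{eq:shape.reg}. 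Combining \eqref{eq:pullback.L2.equiv} with \eqref{eq:hf.equiv} lets us convert every $\norm{\infty}{\nabla\phi_f}$-factor into the appropriate power of $h_f/h_{\widehat f}$.

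For \eqref{eq:bound.d}, I would write $\mu=J_f^*(\contr{X_d}\beta)$ with $\beta\in\PL{r-1}{k+1}(\widehat f)$ using \eqref{eq:pullback.interior}, apply the flat Poincaré inequality $\norm{\widehat f}{\contr{X_d}\beta}\lesssim h_{\widehat f}\norm{\widehat f}{\DIFF\contr{X_d}\beta}$ on $\widehat f$, and then push both sides back through \eqref{eq:pullback.L2.equiv}, using that $\DIFF$ commutes with $J_f^*$ so that $\DIFF\mu=J_f^*(\DIFF\contr{X_d}\beta)$; the degree shift from $k$ to $k+1$ in the form degree is exactly compensated by the extra $\norm{\infty}{\nabla\phi_f}$ absorbed into the $h_f/h_{\widehat f}$ conversion. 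For \eqref{eq:top.decomp} I would transport $\alpha,\beta$ to $\widehat f$ (noting both land in the same form degree $k$), invoke the flat stability of the $\DIFF\oplus\KOSZUL$ decomposition, and transport back; since both summands carry the same degree the pullback factors in \eqref{eq:pullback.L2.equiv} are identical on all three terms and cancel cleanly, leaving a constant independent of $h_f$. For the discrete trace \eqref{eq:tr.poly} I would combine the flat trace inequality $\norm{\partial\widehat f}{\tr\widehat\mu}\lesssim h_{\widehat f}^{-1/2}\norm{\widehat f}{\widehat\mu}$ with \eqref{eq:pullback.L2.equiv} applied on $f$ and on its faces $f'\in\FM{d-1}(f)$, using the boundary regularity \eqref{eq:boundary.reg} to guarantee $\norm{\infty}{\nabla\phi_f}\approx\norm{\infty}{\nabla\phi_{f'}}$ so that the face and cell pullback factors match and produce the single $h_f^{-1/2}$ scaling.

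The main obstacle is bookkeeping the exponents of $\norm{\infty}{\nabla\phi_f}$ in \eqref{eq:pullback.L2.equiv} across the differing form degrees that appear in each inequality — in particular making sure that the $h_f$-power emerging after the conversion \eqref{eq:hf.equiv} matches the claimed scaling, which requires the full strength of the isotropy chain \eqref{eq:shape.reg} rather than a one-sided Jacobian bound. A secondary technical point is the trace inequality, where the forms on $f$ and on $f'$ are pulled back by the \emph{different} diffeomorphisms $\phi_f$ and $\phi_{f'}$; it is precisely the compatibility \eqref{eq:boundary.reg} that reconciles the two scalings and prevents the constant from degenerating as $h\to 0$.
\end{proof}
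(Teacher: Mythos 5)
Your proposal follows essentially the same route as the paper's proof: write every polynomial form on $f$ as a pullback of a flat polynomial form, invoke the flat versions of the three inequalities on the flat cell, and track the Jacobian exponents through \eqref{eq:shape.reg}, \eqref{eq:hf.equiv} and \eqref{eq:boundary.reg}. Your core norm equivalence (with exponent $l-\tfrac d2$ on the Jacobian factor) is exactly the one the paper establishes, and your handling of the three bounds -- the degree shift $k\to k+1$ in \eqref{eq:bound.d} being compensated by the $h$-conversion, the clean cancellation of equal-degree factors in \eqref{eq:top.decomp}, and the use of \eqref{eq:boundary.reg} to reconcile the cell and face Jacobians in \eqref{eq:tr.poly} -- coincides with the paper's computations.

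One point needs fixing, however: you conflate two distinct families of flat objects. You declare the transport to be $\phi_f\st f\to\widehat f:=\Phi_d(f)$, the equivalence map of Definition \ref{def:equiv.meshes}, but the polynomial forms on $f$ are, by Definition \ref{def:poly.spaces.koszul}, pullbacks through $J_f$, so the only flat cell on which they become genuine polynomial forms is $J_f(f)$, not $\Phi_d(f)$. As written, your core estimate $\norm{f}{J_f^*\alpha}\approx\norm{\infty}{\nabla\phi_f}^{l-\frac d2}\norm{\widehat f}{\alpha}$ with $\alpha$ polynomial on $\widehat f$ is ill-typed, and the flat polynomial inequalities cannot be applied to the $\phi_f$-pushforward of a polynomial form on $f$, since that pushforward is in general not polynomial. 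The fix is what the paper does: take the transport map to be $J_f$ itself and the flat cell to be $J_f(f)$; the lemma's hypothesis that the $(J_f)_f$ satisfy \eqref{eq:shape.reg} is precisely what allows the cells $J_f(f)$ to play the role of an equivalent flat polytopal mesh. With this substitution your argument goes through verbatim, up to one remaining point of care (which the paper makes explicit for \eqref{eq:bound.d}): the flat inequalities on $J_f(f)$ require a geometric property of that cell -- e.g.\ that it contains a ball of radius $\gtrsim h_{J_f(f)}$ -- and this must be deduced from Assumption \ref{assum:reg.seq} combined with \eqref{eq:shape.reg} for $J_f$, rather than quoted directly from the regularity of the abstract equivalent mesh $(M_h)_h$, whose cells $\Phi_d(f)$ are a priori different from $J_f(f)$.
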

\begin{proof}
  The main idea is to use the general bound 
  $\norm{f}{\phi^* u} \leq \norm{\infty}{\nabla\phi}^k\norm{\infty}{\det(\nabla\phi^{-1})}^\frac12\norm{f}{u}$ 
  for $u\in L^2\Lambda^k(f)$. 
  We infer from the regularity assumption \eqref{eq:shape.reg} that, for all $1\leq d\leq n$, all $f\in\FM{d}(\Mh)$, and all $p\in\PL{r}{k}(\Real^d)$, 
  we have
  \begin{equation}\label{eq:pullback.equiv.norm}
    \norm{f}{J_f^*p} \approx \norm{\infty}{\nabla J_f}^{k-\frac{d}{2}}\norm{J_f(f)}{p}.
  \end{equation}
  Moreover, \eqref{eq:pullback.interior} and the fact that the pullback commutes with the exterior derivative give, for all $t,\ell\in\Natural$
  \begin{equation}\label{eq:pullback.decomp}
    \KOSZUL\PL{t}{\ell}(f) = J_f^*\KOSZUL\PL{t}{\ell}(\Real^d), \quad
    \DIFF\PL{t}{\ell}(f) = J_f^*\DIFF\PL{t}{\ell}(\Real^d).
  \end{equation}
  Let us now prove \eqref{eq:bound.d}. We first note that this inequality is valid in $J_f(f)\subset \Real^d$; this was established (in the case of vector proxies and $d=3$) in \cite[Lemma 9]{Di-Pietro.Droniou:23}, using the transport technique of \cite[Lemma 1.25]{Di-Pietro.Droniou:20} that only requires $J_f(f)$ to contain a ball of radius $\gtrsim h_{J_f(f)}$ -- which is the case by \eqref{eq:shape.reg} and the mesh regularity assumption.  
  For all $\mu\in \KOSZUL\PL{r-1}{k}(f)$, the relation \eqref{eq:pullback.decomp} shows that $\mu=J_f^*p$ and $\DIFF\mu=J_f^*\DIFF p$ for some $p\in\KOSZUL\PL{r-1}{k}(\Real^d)$. Hence,
    \begin{align*}
      \norm{f}{\mu} 
      \overset{\eqref{eq:pullback.equiv.norm}}&\approx\norm{\infty}{\nabla J_f}^{k-\frac{d}{2}}\norm{J_f(f)}{p}\\
      \overset{\text{\eqref{eq:bound.d} for $J_f(f)$}} &\lesssim\norm{\infty}{\nabla J_f}^{k-\frac{d}{2}}h_{J_f(f)}\norm{J_f(f)}{\DIFF p}\\
      \overset{\eqref{eq:hf.equiv}}&\approx\norm{\infty}{\nabla J_f}^{k-\frac{d}{2}+1}h_f\norm{J_f(f)}{\DIFF p}\\
      \overset{\eqref{eq:pullback.equiv.norm}}&\approx h_f\norm{f}{\DIFF \mu}.
    \end{align*}

  We now turn to \eqref{eq:top.decomp}.
  For all $\mu = J_f^*p\in\KOSZUL\PL{r-1}{k+1}(f)$ and $\nu = J_f^*q\in\DIFF\PL{r}{k-1}(f)$ with $p,q$ in the corresponding spaces on $\Real^d$ instead of $f$, 
  we have
  \begin{equation*}
    \begin{aligned}
      \norm{f}{\mu}+\norm{f}{\nu} 
      \overset{\eqref{eq:pullback.equiv.norm}}&\approx\norm{\infty}{\nabla J_f}^{k-\frac{d}{2}}(\norm{J_f(f)}{p} + \norm{J_f(f)}{q}) 
      \overset{}\approx\norm{\infty}{\nabla J_f}^{k-\frac{d}{2}}\norm{J_f(f)}{p+q}
      \overset{\eqref{eq:pullback.equiv.norm}}\approx\norm{f}{\mu+\nu},
    \end{aligned}
  \end{equation*}
  where the second equality follows from the fact that \eqref{eq:top.decomp} is valid on $J_f(f)\subset\Real^d$, see \cite[Lemma 2]{Di-Pietro.Droniou:23}.

  Finally, we have to prove \eqref{eq:tr.poly}, which is done in a similar way.
  For all $\mu = J_f^*p\in\PL{r}{k}(f)$ and all $f'\in\pf$, we have, using the discrete trace inequality in $\Real^d$,
  \begin{equation*}
    \begin{aligned}
      \norm{f'}{\tr_{f'}\mu} 
      \overset{\eqref{eq:pullback.equiv.norm}}&\approx 
      \norm{\infty}{\nabla J_{f'}}^{k-\frac{d-1}{2}}\norm{J_{f'}(f')}{\tr_{f'}p}\\
      &\lesssim \norm{\infty}{\nabla J_{f'}}^{k-\frac{d-1}{2}}h_{J_{f'}(f')}^{-\frac12}\norm{J_f(f)}{p}\\
      \overset{\eqref{eq:hf.equiv}}&\lesssim
      \norm{\infty}{\nabla J_{f'}}^{k-\frac{d-1}{2}-\frac12}h_{f'}^{-\frac12}\norm{J_f(f)}{p}\\
      \overset{\eqref{eq:boundary.reg},\eqref{eq:pullback.equiv.norm}}&
      \lesssim h_{f}^{-\frac12}\norm{f}{\mu}.\qedhere
    \end{aligned}
  \end{equation*}
\end{proof}

\section{Application}\label{sec:application}

We present here a 2+1 model for the Maxwell equations on a manifold written in the language of differential forms, and use the DDR complex to design a scheme for this model.
We refer the reader to the notations recalled in Appendix \ref{sec:tensor.calculus} and used throughout this section.

\subsection{Electromagnetism in $2+1$ dimensions}
Following \cite{Gourgoulhon:10,Gourgoulhon:12}, 
we foliate a $3$-dimensional space-time manifold $M$ by level sets of a time function $t$. Letting\footnote{Throughout this section, we use lower case Greek letters ($\mu,\nu,\gamma$ etc.) to label space-time coordinate indices that run over $0,1,2$ while lower case Latin letters ($i,j,k$ etc.) will label spatial coordinate indices that run over $1,2$. } $g=g_{\mu\nu}dx^\mu \otimes dx^\nu$ denote a Lorentzian metric on $M$, we perform a $2+1$ decomposition on $g$ via
\begin{equation}\label{eq:g.2metric}
(g_{\mu \nu}) := \begin{pmatrix} -N^2 + \vert\beta\vert_{\gamma}^2 & \beta_j \\ \beta_i & \gamma_{i j} \end{pmatrix},
\end{equation}
where $\gamma=\gamma_{i j}dx^i\otimes dx^j$ is the induced metric on the $t=\text{constant}$ spatial surfaces, $N$ is the lapse, $\beta=\beta_i dx^i$ is the shift, and we use $(x^i)$ to denote spatial coordinates on these surfaces.  
We define a future pointing unit normal to the spatial surfaces by
\begin{equation} \label{eq:defn}
n := (-N\dt)^\sharp,
\end{equation}
and, in the following, we distinguish geometric objects associated with the spatial surface with a tilde; for example, 
$\spvol$ and
$\spsharp$ are the volume form and $\sharp$ operator associated with the metric $\gamma$ on the spatial surfaces. Further, we introduce the vector field $\partial_t$ via
\begin{equation*}
\partial_t = N n + \beta^{\spsharp},
\end{equation*}
and note that for any adapted coordinate system $(t,x^i)$ (i.e. which satisfies $\Lie_{\partial_t}x^i=0$), the vector field $\partial_t$ will coincide with the coordinate vector field $\partial_t$ associated to the coordinate system $(t,x^i)$. 

Letting $F= \frac{1}{2}F_{\mu\nu}dx^\mu\wedge dx^\nu$ denote the electromagnetic field tensor, the Maxwell equations can be expressed in terms of $F$ as \cite[Section 18.2]{Gourgoulhon:10}
\begin{subequations}\label{eq:Maxwell}
\begin{align}
\DIFF F ={}& 0, \label{eq:Maxwell.1} \\
\DIFF \star F ={}& \epsilon_0^{-1} \star \ul{j}, \label{eq:Maxwell.2}
\end{align}
\end{subequations}
where $\ul{j}$ is the electric $3$-current and $\epsilon_0$ is the permittivity of the medium, supposed constant here.
Following the usual convention in $3+1$ dimensions, 
we define the electric field $E$ and the magnetic field $B$ by
\begin{align}
E :={}& -\contr{n} F,\label{eq:defE} \\
B :={}& \contr{n}(\star F), \label{eq:defB}
\end{align}
where we refer to \eqref{eq:contraction} for the definition of the interior product $\contr{n}$.
Notice that, in $2+1$ dimensions, the magnetic field $B$ is a scalar field.
Below, we will use $\spE$ to denote the restriction 
of $E$ to the spatial surfaces, that is, $\spE(\tau) = \iota_{\tau}^* E$ where $\iota_{\tau}$ is the inclusion map of the spatial surface $t=\tau$. If $E$ is expressed in terms of the adapted coordinate $(t,x^i)$ as
$E= E_0 dt + E_i dx^i$,
then $\spE = E_i dx^i$ and $E=E_0 dt + \spE$.

We denote the spatial codifferential on $k$-forms by $\spdelta := (-1)^k \spstar^{-1} \spdiff \spstar$ and assume that the shift vanishes, that is, $\beta=0$.
It is proved in Appendix \ref{sec:tensor.calculus} the Maxwell equations \eqref{eq:Maxwell} can then be re-written in $2+1$ formulation as:
\begin{equation}
\label{eq:Maxwell.general}
\begin{aligned}
  \spdiff (N\spE) ={}& - \partial_t (B \spvol), \\
  -\spdelta \spE ={}& \frac{\rho}{\epsilon_0}, \\
   \spdelta (N B \spvol) ={}& \epsilon_0^{-1} \ul{J} + \spstar^{-1} \partial_t (\spstar \spE),
\end{aligned}
\end{equation}
where $\rho := - \ul{j}(n)$ is the electric charge density and $J := N(\ul{j} - n^\flat \rho)$ is the electric current density. 

For the numerical tests, we consider a metric that is independent of time with constant lapse $N\equiv c$. The system \eqref{eq:Maxwell.general} then reduces to the more familiar form
\begin{subequations}
\label{eq:Maxwell.Stationary}
\begin{align}
\spdiff \spE ={}& - \partial_t B',\\
-\spdelta \spE ={}& \frac{\rho}{\epsilon_0},  \label{eq:Maxwell.Stationary.constraint}\\
\spdelta B' ={}& \mu_0 \widetilde{J} + \frac{1}{c^2} \partial_t \spE,
\end{align}
\end{subequations}
where $B' := \frac{1}{c} B \spvol$ and $\mu_0 := \frac{1}{c^2 \epsilon_0}$ is the vacuum permeability. To simplify, we will also work in geometric units and thus take $c=\epsilon_0=1$.

We note that the following compatibility condition (from hereon assumed) on the source terms, following from the property $\spdelta^2=0$:
\begin{equation}\label{eq:compatibility}
\spdelta\, \widetilde{J} = - \partial_t \spdelta\, \spE = \partial_t \rho .
\end{equation}

We assume that the manifold $\Omega$ has no boundary. Denoting by $\langle\cdot,\cdot\rangle$ the $L^2$-inner product on spatial $k$-forms (for any $k$), recalling that $\spdelta$ is the adjoint of $\spdiff$ for this inner product, and assuming that the constraint \eqref{eq:Maxwell.Stationary.constraint} holds at time $t=0$, a weak formulation of \eqref{eq:Maxwell.Stationary} is: find $(\spE,\spB)\in C^1([0,T];\Lambda^1(\Omega))\times C^1([0,T];\Lambda^2(\Omega))$ such that, for all $(v^1,v^2)\in\Lambda^1(\Omega)\times\Lambda^2(\Omega)$,
\begin{subequations}
\label{eq:Maxwell.Weak}
\begin{align}
\langle \spdiff\spE, v^2\rangle ={}& - \langle\partial_t B',v^2\rangle,\\
\langle B',\spdiff v^1\rangle ={}& \langle \widetilde{J},v^1\rangle + \langle\partial_t \spE,v^1\rangle.
\label{eq:Maxwell.weak.E}
\end{align}
\end{subequations}
We note that the constraint \eqref{eq:Maxwell.Stationary.constraint} has been dropped from this formulation as it can be recovered
by selecting a generic (time-independent) $v^0\in\Lambda^0(\Omega)$ and setting $v^1=\spdiff v^0$ in \eqref{eq:Maxwell.weak.E}, to see that
\[
\langle \partial_t\spdelta \spE,v^0\rangle=\langle \partial_t\spE,\spdiff v^0\rangle
=\langle B',\cancel{\spdiff^2 v^0}\rangle-\langle \widetilde{J},\spdiff v^0\rangle
=-\langle \spdelta\widetilde{J}, v^0\rangle=-\langle\partial_t\rho,v^0\rangle,
\]
showing that $\partial_t\langle \spdelta \spE+\rho,v^0\rangle=0$ and thus, since we assumed that $\spdelta \spE+\rho=0$ at $t=0$, that 
$\langle \spdelta \spE+\rho,v^0\rangle=0$ at all time.

\subsection{Semi-discrete scheme}

We describe here a numerical scheme based on the discrete de Rham complex for 
the system \eqref{eq:Maxwell.Stationary}. This scheme is of arbitrary order of accuracy, applicable to polygonal meshes on the chosen 2D manifold, and naturally preserves the constraint \eqref{eq:Maxwell.Stationary.constraint} at the discrete level.

Recalling the definition of the discrete $L^2$-like inner product (Definition \ref{def:inner.product}), the (semi-discrete) scheme is built on the product space $X_h := \uH{1}{h}\times \uH{2}{h}$ and reads:
find $(\uvec{E}_h,\uvec{B}'_h) \in C^1([0,T];X_h)$ such that, for all $(\uvec{v}_h^1,\ul{v}_h^2) \in X_h$ and all $t\in (0,T)$,
\begin{subequations}\label{eq:Maxwell.discrete.semicont}
\begin{align}
\label{eq:Maxwell.discrete.semicont.B}
  \langle \ul{\DIFF}^1_{r,h} \uvec{E}_h(t), \ul{v}_h^2 \rangle 
  ={}& - \langle \partial_t \uvec{B}'_h(t), \ul{v}_h^2 \rangle ,\\
\label{eq:Maxwell.discrete.semicont.E}
  \langle \uvec{B}'_h(t), \ul{\DIFF}^1_{r,h} \uvec{v}_h^1 \rangle 
  ={}& 
  \langle \uI{1}{h} \widetilde{J}(t), \uvec{v}_h^1 \rangle+
  \langle \partial_t \uvec{E}_h(t), \uvec{v}_h^1 \rangle.
\end{align}
\end{subequations}
We note that, in the context of the DDR complex, the discrete \emph{spatial} exterior derivatives $\ul{\DIFF}^k_{r,h}$ are denoted without a tilde.
Owing to the properties of this complex, this scheme preserves a discrete version of the constraint.

\begin{proposition}[Discrete constraint preservation]\label{eq:prop.constraint}
Let $\rho_h\in C^1([0,T];\uH{0}{h})$ be the discrete electric charge density defined by
\begin{equation}\label{eq:def.rhoh}
\langle\rho_h(t),\ul{v}_h^0\rangle= -\langle\uvec{E}_h(0),\ul{\DIFF}^0_{r,h}\ul{v}_h^0\rangle + \int_0^t \langle \uI{1}{h} \widetilde{J}(s),\ul{\DIFF}_{r,h}^0\ul{v}_h^0\rangle\,ds\qquad\forall t\in [0,T]\,,\forall\ul{v}_h^0\in\uH{0}{h}.
\end{equation}
Under Assumptions \ref{assumption:homogeneous} and \ref{assumption:tracetrimmed},
if $(\uvec{E}_h,\uvec{B}'_h)$ is a solution to \eqref{eq:Maxwell.discrete.semicont}, then
\begin{equation}\label{eq:constraint.discrete}
\langle \uvec{E}_h(t),\ul{\DIFF}^0_{r,h}\ul{v}_h^0\rangle = -\langle \rho_h(t),\ul{v}_h^0\rangle
\qquad\forall t\in [0,T]\,,\forall\ul{v}_h^0\in\uH{0}{h}.
\end{equation}
\end{proposition}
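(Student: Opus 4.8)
The plan is to mirror, at the discrete level, the continuous constraint-preservation computation carried out just above in the excerpt: there the key ingredient was $\spdelta$ being the adjoint of $\spdiff$ together with $\spdiff^2=0$; here the corresponding structural input is the discrete complex property $\ul{\DIFF}^1_{r,h}\ul{\DIFF}^0_{r,h}=0$ furnished by Lemma \ref{lem:disc.complex}.

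First I would fix an arbitrary \emph{time-independent} $\ul{v}_h^0\in\uH{0}{h}$ and test the evolution equation \eqref{eq:Maxwell.discrete.semicont.E} with the particular choice $\uvec{v}_h^1:=\ul{\DIFF}^0_{r,h}\ul{v}_h^0\in\uH{1}{h}$. Its left-hand side then becomes $\langle\uvec{B}'_h(t),\ul{\DIFF}^1_{r,h}\ul{\DIFF}^0_{r,h}\ul{v}_h^0\rangle$, which vanishes identically because $\ul{\DIFF}^1_{r,h}\ul{\DIFF}^0_{r,h}=0$ by the discrete complex property (Lemma \ref{lem:disc.complex}). This cancellation is the crucial step and is precisely where the DDR structure enters; it is the discrete analogue of the $\langle B',\cancel{\spdiff^2 v^0}\rangle$ term in the continuous derivation.

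With the left-hand side removed, \eqref{eq:Maxwell.discrete.semicont.E} reduces to $0=\langle\uI{1}{h}\widetilde{J}(t),\ul{\DIFF}^0_{r,h}\ul{v}_h^0\rangle+\langle\partial_t\uvec{E}_h(t),\ul{\DIFF}^0_{r,h}\ul{v}_h^0\rangle$. Since $\ul{v}_h^0$, and hence $\ul{\DIFF}^0_{r,h}\ul{v}_h^0$, does not depend on $t$, the discrete inner product is linear and time-independent in its second argument, so the last term equals $\partial_t\langle\uvec{E}_h(t),\ul{\DIFF}^0_{r,h}\ul{v}_h^0\rangle$. This yields the scalar identity $\partial_t\langle\uvec{E}_h(t),\ul{\DIFF}^0_{r,h}\ul{v}_h^0\rangle=-\langle\uI{1}{h}\widetilde{J}(t),\ul{\DIFF}^0_{r,h}\ul{v}_h^0\rangle$. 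Integrating from $0$ to $t$ (legitimate since $\uvec{E}_h\in C^1([0,T];X_h)$) gives $\langle\uvec{E}_h(t),\ul{\DIFF}^0_{r,h}\ul{v}_h^0\rangle=\langle\uvec{E}_h(0),\ul{\DIFF}^0_{r,h}\ul{v}_h^0\rangle-\int_0^t\langle\uI{1}{h}\widetilde{J}(s),\ul{\DIFF}^0_{r,h}\ul{v}_h^0\rangle\,ds$, whose right-hand side is exactly $-\langle\rho_h(t),\ul{v}_h^0\rangle$ by the defining formula \eqref{eq:def.rhoh} of $\rho_h$. Matching the two sides establishes \eqref{eq:constraint.discrete}.

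I do not expect a genuine obstacle: the argument is purely algebraic once the complex property is invoked, and the only point deserving a line of care is the commutation of $\partial_t$ with the discrete pairing, which is immediate from the time-independence of the test function $\ul{v}_h^0$ and the bilinearity of $\langle\cdot,\cdot\rangle$, together with the assumed $C^1$-in-time regularity of the solution that makes the time integration rigorous.
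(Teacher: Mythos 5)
Your proof is correct and follows essentially the same route as the paper: both test \eqref{eq:Maxwell.discrete.semicont.E} with $\uvec{v}_h^1=\ul{\DIFF}^0_{r,h}\ul{v}_h^0$, invoke the complex property $\ul{\DIFF}^1_{r,h}\ul{\DIFF}^0_{r,h}=0$ from Lemma \ref{lem:disc.complex}, and then compare the resulting ODE with the definition \eqref{eq:def.rhoh} of $\rho_h$. The only (cosmetic) difference is that you integrate the ODE in time and match it against \eqref{eq:def.rhoh} directly, whereas the paper differentiates \eqref{eq:def.rhoh} and shows the quantity $\langle \uvec{E}_h(t),\ul{\DIFF}^0_{r,h}\ul{v}_h^0\rangle+\langle\rho_h(t),\ul{v}_h^0\rangle$ is constant and vanishes at $t=0$.
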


\begin{remark}[On the discrete constraint preservation]
By the Riesz representation theorem, \eqref{eq:def.rhoh} uniquely defines the discrete electric charge density $\rho_h(t)\in\uH{0}{h}$.
This definition only depends on the data of the model (initial value of the electric field and electric current density) and, given the relation \eqref{eq:compatibility}, provides a consistent discrete version of the continuous electric charge density.

Letting $\ul{\delta}_{r,h}^1$ be the adjoint of $\ul{\DIFF}_{r,h}^1$, the relation \eqref{eq:constraint.discrete} can be recast as $\ul{\delta}_{r,h}^1\uvec{E}_h(t)=-\rho_h(t)$, which is a coherent discrete version of the continuous constraint \eqref{eq:Maxwell.Stationary.constraint} (recall that $\epsilon_0=1$ here).
\end{remark}

\begin{proof}[Proof of Proposition \ref{eq:prop.constraint}]
Let $\ul{v}_h^0\in\uH{0}{h}$ and set $\uvec{v}_h^1=\ul{\DIFF}_{r,h}^0\ul{v}_h^0$ in \eqref{eq:Maxwell.discrete.semicont.E}.
Since $\ul{\DIFF}_{r,h}^1\ul{\DIFF}_{r,h}^0=0$ by complex property of the DDR sequence, we obtain
\[
\langle \partial_t \uvec{E}_h(t), \ul{\DIFF}_{r,h}^0\ul{v}_h^ 0\rangle
=-\langle \uI{1}{h} \widetilde{J}(t), \ul{\DIFF}_{r,h}^0\ul{v}_h^0 \rangle
=-\langle \partial_t\rho_h(t), \ul{v}_h^0 \rangle,
\]
where the second equality follows differentiating the definition \eqref{eq:def.rhoh} of $\rho_h$ with respect to $t$.
Since $\ul{v}_h^0$ does not depend on $t$, this shows that the derivative of $\mathcal C(t):= \langle \uvec{E}_h(t),\ul{\DIFF}^0_{r,h}\ul{v}_h^0\rangle +\langle \rho_h(t),\ul{v}_h^0\rangle$ vanishes. Since $\mathcal C(0)=0$ by \eqref{eq:def.rhoh}, we infer that $\mathcal C=0$ on $[0,T]$, which proves \eqref{eq:constraint.discrete}.
\end{proof}

\begin{lemma}[Energy preservation]
In the absence of a current density, the solution $(\uvec{E}_h,\uvec{B}'_h)$ of \eqref{eq:Maxwell.discrete.semicont} satisfies
\begin{equation}\label{eq:Maxwell.energy}
  \partial_t \left(\langle \uvec{E}_h,\uvec{E}_h\rangle + \langle  \uvec{B}'_h,\uvec{B}'_h\rangle\right)
   = 0.
\end{equation}
\end{lemma}
\begin{proof}
Evaluating \eqref{eq:Maxwell.discrete.semicont.B} and \eqref{eq:Maxwell.discrete.semicont.E} 
with $\uvec{v}_h^1 = \uvec{E}_h$ and $\underline{v}_h^2 = \uvec{B}'_h$ yields $\langle \partial_t \uvec{E}_h,\uvec{E}_h\rangle + \langle \partial_t \uvec{B}'_h,\uvec{B}'_h\rangle=0$, which implies the result.
\end{proof}
\begin{remark}[Discrete energy preservation]\label{rem:discrete.energy}
When using a time discretisation scheme preserving quadratic invariants, such as the Crank--Nicolson time stepping, 
a discrete version of \eqref{eq:Maxwell.energy} can be established and shows that the total energy of the system $\langle \uvec{E}_h,\uvec{E}_h\rangle + \langle \uvec{B}'_h,\uvec{B}'_h\rangle$ remains constant in time. 
Dissipative time-stepping, such as the implicit Euler method, lead to a decrease of the total energy. 
\end{remark}
 
\begin{remark}[Well-posedness of the semi-discrete and fully discrete schemes]
The semi-discrete scheme \eqref{eq:Maxwell.discrete.semicont} is a linear system of ordinary differential equations in the finite-dimensional space $X_h$. By the Cauchy--Lipschitz theorem, for each pair of initial conditions it has a unique solution over $[0,T]$.

Regarding fully discrete versions of this scheme, when using implicit Euler or Crank--Nicolson, the scheme reduces at each time step to a square linear system of equations. The energy estimate satisfied by these schemes (see Remark \ref{rem:discrete.energy}) provides a bound on any of their solutions; as a consequence, the system's matrix has a trivial kernel and the system is uniquely solvable, which shows that the fully discrete schemes also have a unique solution for each chosen initial condition.
\end{remark}

\subsection{Building the mesh}\label{sec:build.mesh}

Meshing manifolds while ensuring Assumption \ref{assumption:straighten} is not trivial, and to our knowledge, there is no available software producing the required data. 
We therefore developed our own in-house mesh generator for the two manifolds considered here, the flat torus and the sphere.
The meshing of the flat torus is a Cartesian grid on $[0,1)^2$ using four charts to correctly match the boundary.
The meshing of the sphere is more interesting, and described now.

We restrict the two charts of the atlas to the north and south hemispheres; these charts do not overlap on an open set, as their transition occurs on a closed $1$-dimensional interface (the equator), but that is sufficient to build a mesh and the associated polynomial spaces for the entire sphere. Each chart maps an hemisphere into the unit disk of $\Real^2$, and both charts coincide on the equator (that they map onto the unit circle). We therefore only have to discretise this unit disk to get the mesh on the whole sphere.
In order to correctly discretise to boundary circle, we chose to use one layer of curved cell, and then to map the inside of the disk using arbitrary flat cells.

We devised an automated mesh generation algorithm. Starting from a chosen radius $r_s$,
we cut the outer circle into $\lfloor 2\pi(1-r_s)/r_s\rfloor$ segment, 
each of the same length $\Delta \alpha$.
Then, we used the construction \ref{sec:cone.curved} with 
\begin{equation*}
I_1(t) = R_{\alpha - \frac{\pi - \Delta \alpha}{2}} 
\begin{pmatrix} \frac{2 t r}{\sqrt{1 + (2 t r)^2}} \\ \frac{1}{\sqrt{1 + (2 t r)^2}} \end{pmatrix}, 
  \quad t \in \left[-\frac12,\frac12\right],
\end{equation*}
where $r = \frac{\cos(\frac{\pi - \Delta \alpha}{2})}{\sin(\frac{\pi - \Delta \alpha}{2})}$, 
$\alpha$ is the angle between the edge $E_4$ specified in Figure \ref{fig:cone.curved} and the line $x = 0$ in the plane,
and $R_{\alpha - \frac{\pi - \Delta \alpha}{2}}$ 
the rotation of angle $\alpha - \frac{\pi - \Delta \alpha}{2}$.
The connected cell ensuring the transition between the curved boundary and the flat interior is the cone given by the mapping 
\begin{equation*}
I_f(t,p) = R_{\alpha - \frac{\pi - \Delta \alpha}{2}}
\begin{pmatrix}
  2 t \frac{x_B}{y_B} \frac{p + (1 - p) r}{\sqrt{1 + (2 t \frac{x_B}{y_B})^2}} \\
  \frac{p + (1 - p) r}{\sqrt{1 + (2 t \frac{x_B}{y_B})^2}}
\end{pmatrix}
, \quad t \in \left[-\frac12,\frac12\right], p \in [0,1],
\end{equation*}
where 
\begin{align*}
x_T ={}& \cos(\frac{\pi - \Delta \alpha}{2})\,,\quad
x_B = 0.8 \cos(\frac{\pi - \Delta \alpha}{2})\,,\\
y_T ={}& \sin(\frac{\pi - \Delta \alpha}{2})\,,\quad
y_B = \sin(\frac{\pi - \Delta \alpha}{2})\quad\mbox{and}\quad
r = \sqrt{y_B^2 + (2 t x_B)^2}.
\end{align*}
Then, we divide the interior with concentric regular convex polygons of radius $1-i r_s$, for $1 \leq i \leq \lfloor 1/r_s\rfloor$,
and connect the vertices of these polygons to their nearest neighbour strictly farther away from the center.
The resulting mapping is given in Figure \ref{fig:mesh.disk}.
The coarsest mesh considered in the numerical experiment uses $28$ boundary cells, 
$16$ triangles, $4$ quads, and $12$ pentagons, 
while the finest uses $346$ boundary cells, $34$ triangles, $4472$ quads, and $312$ pentagons.

\begin{figure}
\begin{center}
  \begin{minipage}[c]{0.45\columnwidth}\centering
    \includegraphics[width=0.9\columnwidth]{mesh_orientation}
    \subcaption{Mesh used to discretise one chart with its orientation.}
  \end{minipage}
  \begin{minipage}[c]{0.45\columnwidth}\centering
    \includegraphics[width=0.9\columnwidth]{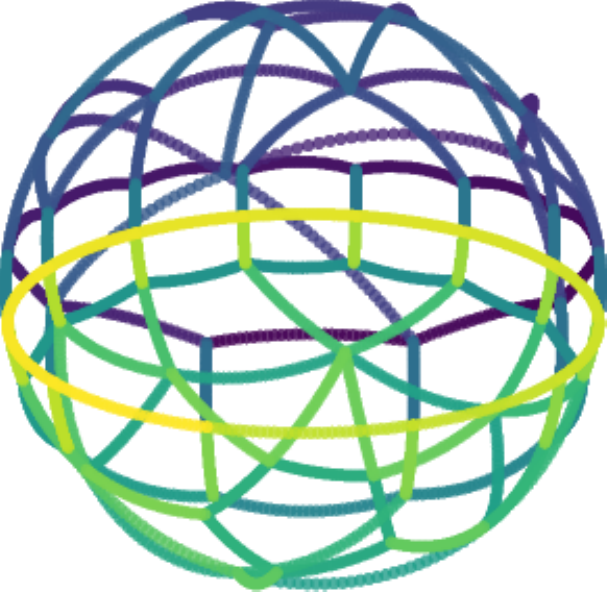}
    \subcaption{Embedding of the $1$-skeleton of the mesh into $\Real^3$.}
  \end{minipage}
  \begin{minipage}[c]{0.45\columnwidth}\centering
    \includegraphics[width=0.9\columnwidth]{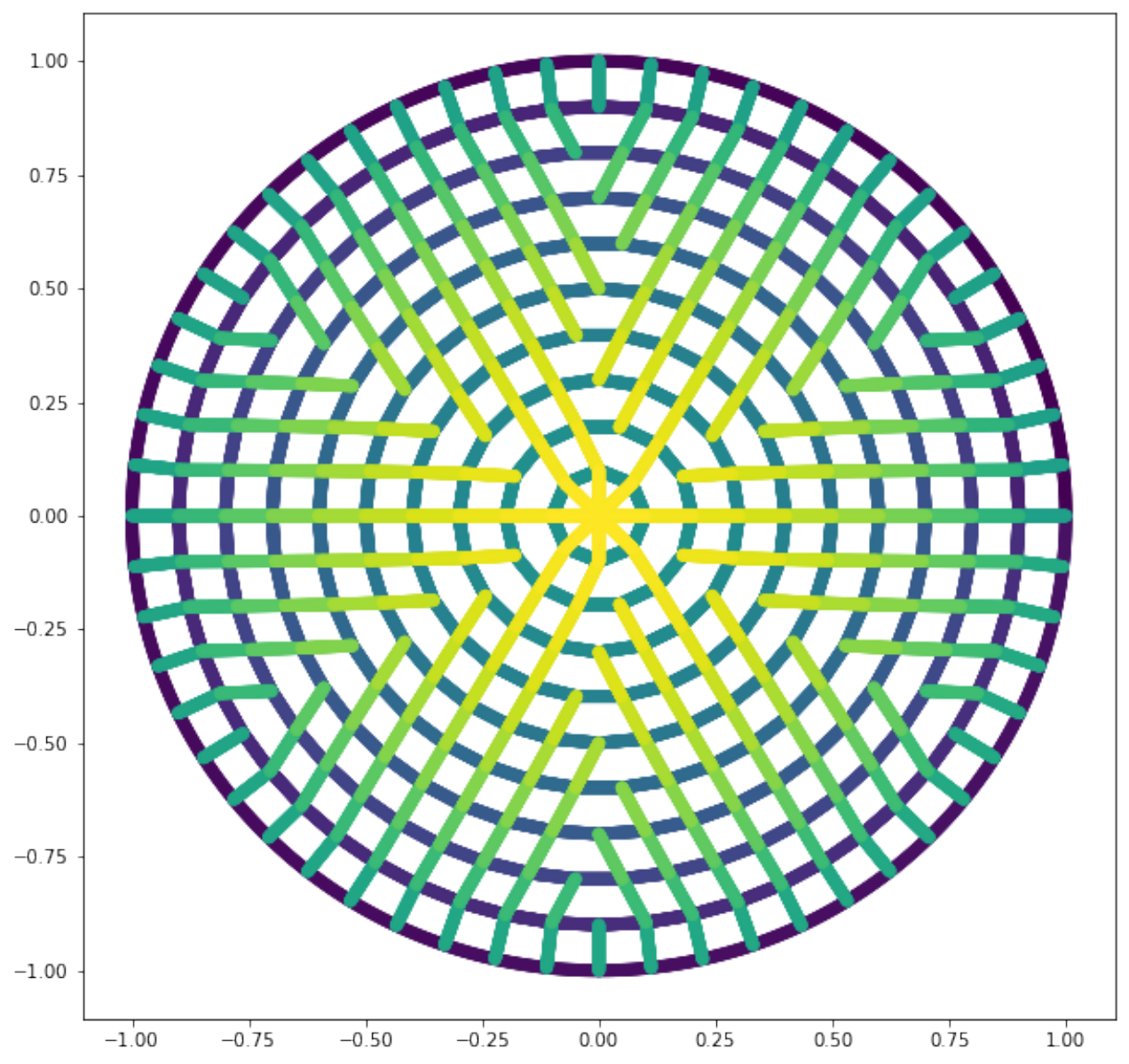}
    \subcaption{Finer meshing of the disk.}
  \end{minipage}
  \begin{minipage}[c]{0.45\columnwidth}\centering
    \includegraphics[width=0.9\columnwidth]{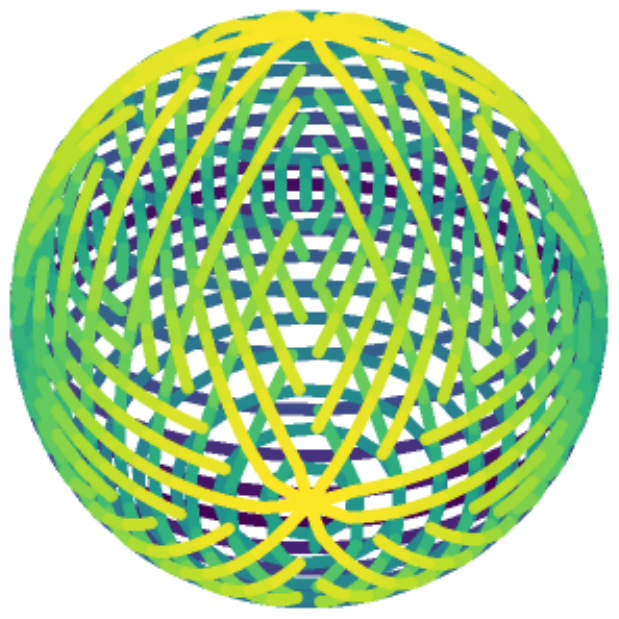}
    \subcaption{Embedding of the finer mesh into $\Real^3$.}
  \end{minipage}
\end{center}
\caption{Visualization of the mesh}
\label{fig:mesh.disk}
\end{figure}

\subsection{Results}

We present here the results of the numerical tests. 
The scheme has been implemented in the Manicore C++ framework 
(see \url{https://mlhanot.github.io/Manicore/}, version 1.0.1), 
using linear algebra facilities from the Eigen3 library (see \url{https://eigen.tuxfamily.org}). 
Although the framework has been reimplemented from scratch, 
its design borrows from the HArDCore C++ framework implementing the DDR complexes on flat spaces 
(see \url{https://github.com/jdroniou/HArDCore}).
From classical results on flat spaces, we expect to see a convergence in space of order $r+1$ for the $L^2$ norm of $E$ and $B$, when the exact solutions are smooth and only spatial error is accounted for.

We used a Crank-Nicolson time stepping.
In order to neglect the error originated from the time discretisation, we first used a time step of $\Delta t = 10^{-5}$. 
These results were compared with those obtained using the larger time step $\Delta t = 10^{-3}$, and showed only a negligible difference (a relative difference of order $10^{-6}$). Hence, at the considered time steps the spatial discretisation fully dominates, and to save computational resources we conducted all the tests below using the larger time step $\Delta t=10^{-3}$.

The simulation are computed on the time span $0 \leq t \leq 2\pi$, and the error is the $L^2$ error over time and space. For $E$, for example, this error is therefore
\[
\sqrt{\int_{t = 0}^{2\pi}  \langle \uI{k}{h} E - \ul{E}_h,\uI{k}{h} E - \ul{E}_h\rangle}.
\]

We consider two test cases: one with $C^0$ (but not smooth) reference solutions on the sphere and the torus, the other with a smooth reference solution on the sphere.

\subsection{$C^0$ test}

On the torus, the reference solution is given in coordinates $(X,Y) \in [0,1]^2$ by
\begin{align*}
B' ={}& (2 + d_2(X,Y,t))\DIFF X\wedge \DIFF Y,\\
E ={}& d_2(X,0.5,t)\DIFF Y,\\
\rho ={}& 0, \\
J ={}& (2Y - 1)\DIFF X,
\end{align*}
where $d_2(X,Y,t) = \Vert (X,Y) - (t,0.5)\Vert^2$ is the Euclidean distance between the point $(X,Y)$ and the point $(t,0.5)$ on the torus.
The solution for the sphere is given in Appendix \ref{sec:exact.solutions.C0} (see \eqref{eq:sol.sp.r1}--\eqref{eq:sol.sp.r2}).

We observed similar results for the two $C^0$ solutions, see Figure \ref{fig:convrate}. We do not expect and have not noticed a convergence of the error on $\DIFF E$, since $E$ is not smooth in this test case.
Moreover, due to this lack of smoothness of the exact solution, we do no necessarily expect an optimal convergence rate of order $r+1$ in $L^2$-norm. 
On the sphere, we however note a rate of convergence of order $1$ when $r=0$ and order $2$ when $r\ge 1$. For the torus, the rates seem to stagnate at $1$. However, in both cases,
the error is reduced by almost an order of magnitude when going from the lowest-order case to the case $r\ge 1$; 
such a phenomenon (improvement of the ratio cost/accuracy when using a slightly higher-order method, even when the solution is not smooth) has already been qualitatively observed in several other numerical schemes for different models \cite{Anderson.Droniou:18,Lemaire.Moatti:23,Beaude.ea:23}.

\begin{figure}
\begin{center}
  \ref{leg.conv.sphere}\medskip
  \\
  \begin{minipage}[c]{0.45\columnwidth}\centering
    \begin{tikzpicture}
      \begin{loglogaxis}[legend columns=5,legend to name=leg.conv.sphere]
        \addplot +[mark=+, style=solid, color=blue] table[x index=0, y index=1] {sphere_0};
        \addplot +[mark=triangle, style=solid, color=red] table[x index=0, y index=1] {sphere_1};
        \addplot +[mark=square, style=solid, color=brown] table[x index=0, y index=1] {sphere_2};
        \addplot +[mark=pentagon, style=solid, color=darkgray] table[x index=0, y index=1] {sphere_3};
        \addplot +[mark=o, style=solid, color=violet] table[x index=0, y index=1] {sphere_4};
        \logLogSlopeTriangle{0.90}{0.4}{0.1}{1}{blue};
        \logLogSlopeTriangle{0.90}{0.4}{0.1}{2}{red};
        \legend{$r=0$,$r=1$,$r=2$,$r=3$,$r=4$};
      \end{loglogaxis}
    \end{tikzpicture}
    \subcaption{Error on $E$ on the sphere.}
  \end{minipage}
  \hfill
  \begin{minipage}[c]{0.5\columnwidth}\centering
    \begin{tikzpicture}
      \begin{loglogaxis}
        \addplot +[mark=+, style=solid, color=blue] table[x index=0, y index=3] {sphere_0};
        \addplot +[mark=triangle, style=solid, color=red] table[x index=0, y index=3] {sphere_1};
        \addplot +[mark=square, style=solid, color=brown] table[x index=0, y index=3] {sphere_2};
        \addplot +[mark=pentagon, style=solid, color=darkgray] table[x index=0, y index=3] {sphere_3};
        \addplot +[mark=o, style=solid, color=violet] table[x index=0, y index=3] {sphere_4};
        \logLogSlopeTriangle{0.90}{0.4}{0.1}{1}{blue};
        \logLogSlopeTriangle{0.90}{0.4}{0.1}{2}{red};
      \end{loglogaxis}
    \end{tikzpicture}
    \subcaption{Error on $B$ on the sphere.}
  \end{minipage}
  \begin{minipage}[c]{0.45\columnwidth}\centering
    \begin{tikzpicture}
      \begin{loglogaxis}
        \addplot +[mark=+, style=solid, color=blue] table[x index=0, y index=1] {torus_0};
        \addplot +[mark=triangle, style=solid, color=red] table[x index=0, y index=1] {torus_1};
        \addplot +[mark=square, style=solid, color=brown] table[x index=0, y index=1] {torus_2};
        \addplot +[mark=pentagon, style=solid, color=darkgray] table[x index=0, y index=1] {torus_3};
        \addplot +[mark=o, style=solid, color=violet] table[x index=0, y index=1] {torus_4};
        \logLogSlopeTriangle{0.90}{0.25}{0.1}{1}{blue};
        \logLogSlopeTriangle{0.90}{0.25}{0.1}{2}{red};
      \end{loglogaxis}
    \end{tikzpicture}
    \subcaption{Error on $E$ on the torus.}
  \end{minipage}
  \hfill
  \begin{minipage}[c]{0.5\columnwidth}\centering
    \begin{tikzpicture}
      \begin{loglogaxis}
        \addplot +[mark=+, style=solid, color=blue] table[x index=0, y index=3] {torus_0};
        \addplot +[mark=triangle, style=solid, color=red] table[x index=0, y index=3] {torus_1};
        \addplot +[mark=square, style=solid, color=brown] table[x index=0, y index=3] {torus_2};
        \addplot +[mark=pentagon, style=solid, color=darkgray] table[x index=0, y index=3] {torus_3};
        \addplot +[mark=o, style=solid, color=violet] table[x index=0, y index=3] {torus_4};
        \logLogSlopeTriangle{0.90}{0.25}{0.1}{1}{blue};
        \logLogSlopeTriangle{0.90}{0.25}{0.1}{2}{red};
      \end{loglogaxis}
    \end{tikzpicture}
    \subcaption{Error on $B$ on the torus.}
  \end{minipage}
\end{center}
\caption{Errors vs.~mesh size for the $C^0$ solutions.}
\label{fig:convrate}
\end{figure}
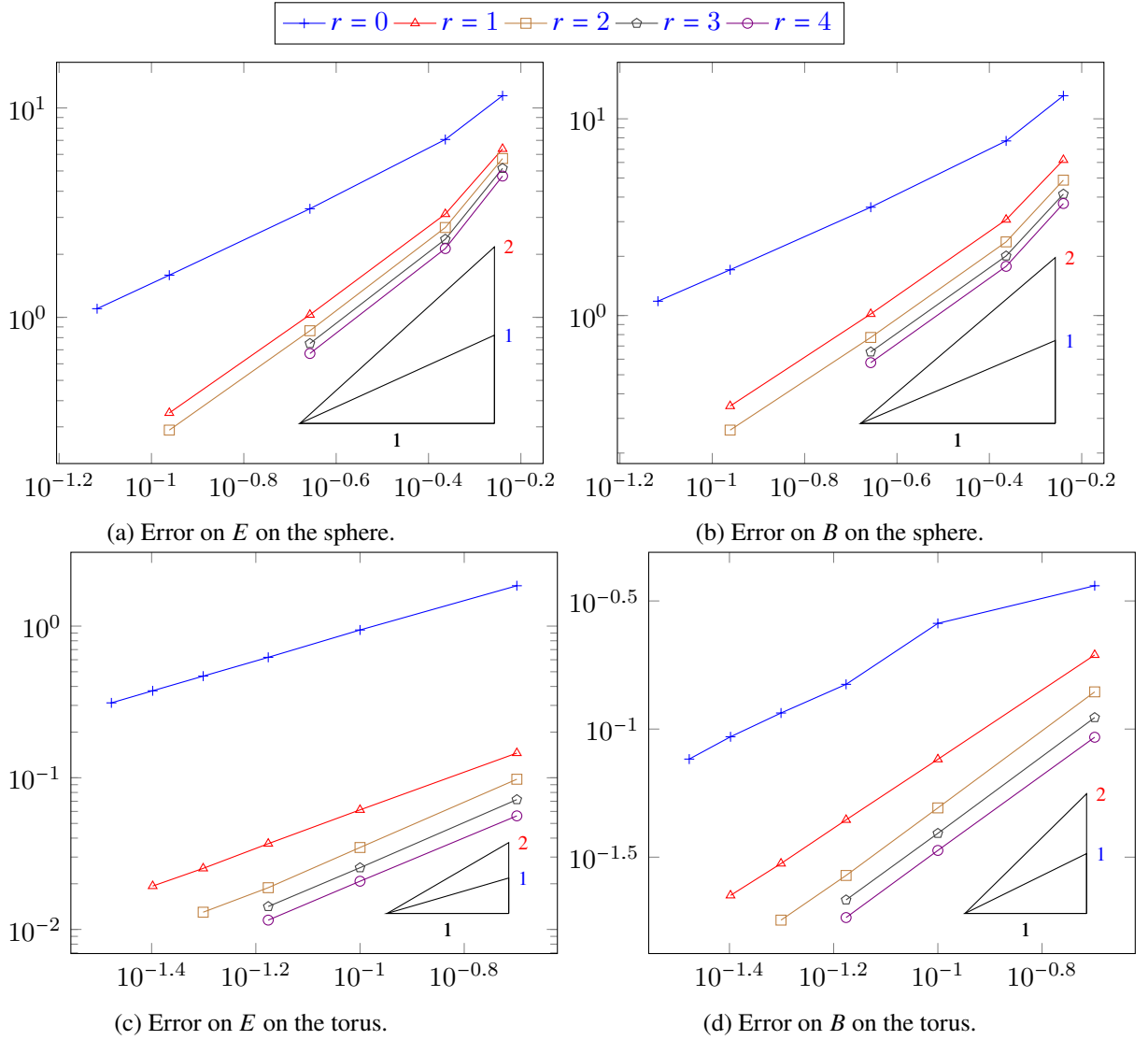

\subsection{Smooth solution}\label{sec:results.smooth}

The smooth reference solution on the sphere is given by \eqref{eq:sol.sp.s} in Appendix \ref{sec:exact.solutions.smooth}.
The errors on $E$, $B$ and $\DIFF E$ for the smooth case on the sphere are given in Figure \ref{fig:convrate.smooth},
and the convergence rates computed as the best fit in the least square sense 
of the value in Figure \ref{fig:convrate.smooth} are given in Table \ref{tab:convrate.smooth}. 
We notice an improvement of the convergence rate when increasing the polynomial degree $r$ that exceeds our expectation for the magnetic field $B$.
The observed orders of convergence for $E$ and $\DIFF E$ are close to $r+ 1.5$ for $r=0,1,3$;
the observed order of convergence for $B$ is overall even higher (around $r+2.5$ for $r=1,2,3$). 
The slight decrease of the order of convergence for $r=4$ may be attributed to the limited accuracy of the quadrature rule available in the code to interpolate the exact function (for this $r$, a very high degree of accuracy of that interpolation is required).
Moreover, the sharp decrease on $B$ for $r=4$ on the finest mesh can be explained by 
the fact that the error from the spatial discretisation becomes 
comparable to the error from the temporal discretisation.
Indeed, this happen when the absolute error approaches $10^{-9}$, which is around the observed contribution 
from the temporal discretisation.

The discrete constraint preservation \eqref{eq:constraint.discrete},
and the energy preservation \eqref{eq:Maxwell.energy} were observed to hold up to machine precision. 
The computed values for a few illustrative test cases for the smooth solution are given in Figure \ref{fig:conservation}.
While we can see the floating point error build up for the discrete constraint, 
the energy remains remarkably stable, with a difference between the minimum and maximum values over time reached 
on any single simulation of less than $10^{-11}$.

One of the goals of high-order methods is to provide highly accurate solutions on a coarser mesh at a lower cost 
than a low-order method on a fine mesh. 
We measured this ratio of cost/accuracy for the smooth case.
The simulations being run on a personal laptop, 
we do not expect the computational time to be a good metric 
as it is affected by the other activities running in parallel.
Instead, we provide a comparison between the accuracy and the number of degrees of freedom 
of each combination of polynomial degree and mesh in Figure \ref{fig:convrate.smoothdof}.
As the pattern of non-zero elements is similar for all the cases with $r>0$, 
we expect the size of the system to be a decent proxy for the computational complexity.
We observe that using higher degree polynomials on coarse meshes is indeed far more efficient
than using low order polynomials on fine meshes.

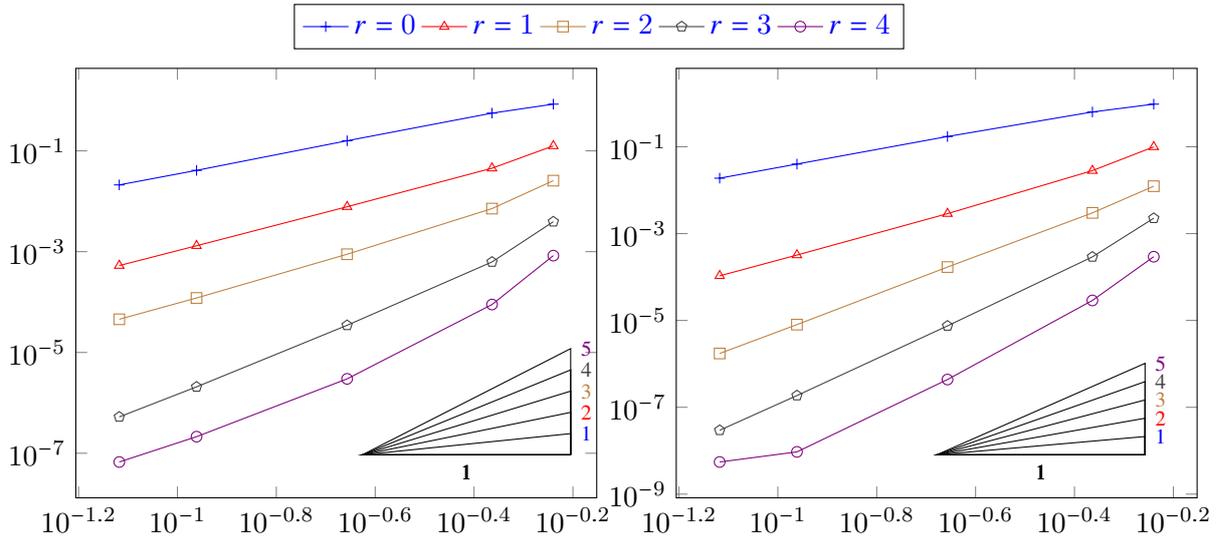
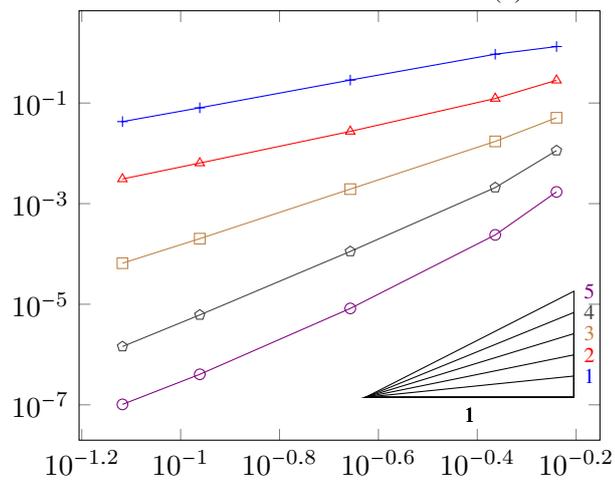
\begin{figure}
\begin{center}
  \ref{leg.conv.sphere}\medskip
  \\
  \begin{minipage}[c]{0.45\columnwidth}\centering
    \begin{tikzpicture}
      \begin{loglogaxis}
        \addplot +[mark=+, style=solid, color=blue] table[x index=0, y index=1] {ssphere_0};
        \addplot +[mark=triangle, style=solid, color=red] table[x index=0, y index=1] {ssphere_1};
        \addplot +[mark=square, style=solid, color=brown] table[x index=0, y index=1] {ssphere_2};
        \addplot +[mark=pentagon, style=solid, color=darkgray] table[x index=0, y index=1] {ssphere_3};
        \addplot +[mark=o, style=solid, color=violet] table[x index=0, y index=1] {ssphere_4};
        \logLogSlopeTriangle{0.95}{0.4}{0.1}{1}{blue};
        \logLogSlopeTriangle{0.95}{0.4}{0.1}{2}{red};
        \logLogSlopeTriangle{0.95}{0.4}{0.1}{3}{brown};
        \logLogSlopeTriangle{0.95}{0.4}{0.1}{4}{darkgray};
        \logLogSlopeTriangle{0.95}{0.4}{0.1}{5}{violet};
      \end{loglogaxis}
    \end{tikzpicture}
    \subcaption{Error on $E$.}
  \end{minipage}
  \hfill
  \begin{minipage}[c]{0.5\columnwidth}\centering
    \begin{tikzpicture}
      \begin{loglogaxis}
        \addplot +[mark=+, style=solid, color=blue] table[x index=0, y index=3] {ssphere_0};
        \addplot +[mark=triangle, style=solid, color=red] table[x index=0, y index=3] {ssphere_1};
        \addplot +[mark=square, style=solid, color=brown] table[x index=0, y index=3] {ssphere_2};
        \addplot +[mark=pentagon, style=solid, color=darkgray] table[x index=0, y index=3] {ssphere_3};
        \addplot +[mark=o, style=solid, color=violet] table[x index=0, y index=3] {ssphere_4};
        \logLogSlopeTriangle{0.90}{0.4}{0.1}{1}{blue};
        \logLogSlopeTriangle{0.90}{0.4}{0.1}{2}{red};
        \logLogSlopeTriangle{0.90}{0.4}{0.1}{3}{brown};
        \logLogSlopeTriangle{0.90}{0.4}{0.1}{4}{darkgray};
        \logLogSlopeTriangle{0.90}{0.4}{0.1}{5}{violet};
      \end{loglogaxis}
    \end{tikzpicture}
    \subcaption{Error on $B$.}
  \end{minipage}\\
  \begin{minipage}[c]{0.45\columnwidth}\centering
    \begin{tikzpicture}
      \begin{loglogaxis}
        \addplot +[mark=+, style=solid, color=blue] table[x index=0, y index=2] {ssphere_0};
        \addplot +[mark=triangle, style=solid, color=red] table[x index=0, y index=2] {ssphere_1};
        \addplot +[mark=square, style=solid, color=brown] table[x index=0, y index=2] {ssphere_2};
        \addplot +[mark=pentagon, style=solid, color=darkgray] table[x index=0, y index=2] {ssphere_3};
        \addplot +[mark=o, style=solid, color=violet] table[x index=0, y index=2] {ssphere_4};
        \logLogSlopeTriangle{0.95}{0.4}{0.1}{1}{blue};
        \logLogSlopeTriangle{0.95}{0.4}{0.1}{2}{red};
        \logLogSlopeTriangle{0.95}{0.4}{0.1}{3}{brown};
        \logLogSlopeTriangle{0.95}{0.4}{0.1}{4}{darkgray};
        \logLogSlopeTriangle{0.95}{0.4}{0.1}{5}{violet};
      \end{loglogaxis}
    \end{tikzpicture}
    \subcaption{Error on $\DIFF E$.}
  \end{minipage}
\end{center}
\caption{Errors vs.~mesh size for the smooth solution.}
\label{fig:convrate.smooth}
\end{figure}

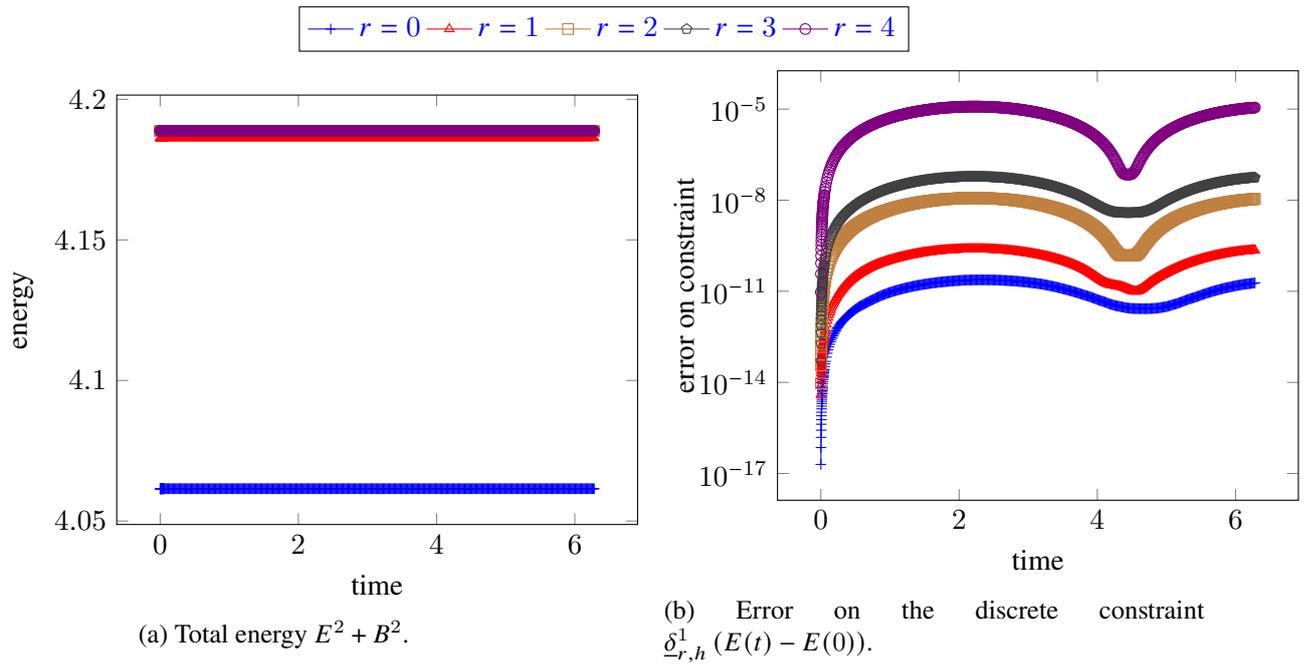
\begin{figure}
\begin{center}
  \ref{leg.conv.sphere}\medskip
  \\
  \begin{minipage}[c]{0.45\columnwidth}\centering
    \begin{tikzpicture}
      \begin{axis}
        \addplot +[mark=+, style=solid, color=blue] table[x index=0, y index=5] {maxwell_d0};
        \addplot +[mark=triangle, style=solid, color=red] table[x index=0, y index=5] {maxwell_d1};
        \addplot +[mark=square, style=solid, color=brown] table[x index=0, y index=5] {maxwell_d2};
        \addplot +[mark=pentagon, style=solid, color=darkgray] table[x index=0, y index=5] {maxwell_d3};
        \addplot +[mark=o, style=solid, color=violet] table[x index=0, y index=5] {maxwell_d4};
      \end{axis}
    \end{tikzpicture}
    \subcaption{Total energy $E^2 + B^2$.}
  \end{minipage}
  \hfill
  \begin{minipage}[c]{0.45\columnwidth}\centering
    \begin{tikzpicture}
      \begin{semilogyaxis}
        \addplot +[mark=+, style=solid, color=blue] table[x index=0, y index=6] {maxwell_d0};
        \addplot +[mark=triangle, style=solid, color=red] table[x index=0, y index=6] {maxwell_d1};
        \addplot +[mark=square, style=solid, color=brown] table[x index=0, y index=6] {maxwell_d2};
        \addplot +[mark=pentagon, style=solid, color=darkgray] table[x index=0, y index=6] {maxwell_d3};
        \addplot +[mark=o, style=solid, color=violet] table[x index=0, y index=6] {maxwell_d4};
      \end{semilogyaxis}
    \end{tikzpicture}
    \subcaption{Error on the discrete constraint $\ul{\delta}^1_{r,h}\left( E(t) - E(0) \right)$.}
  \end{minipage}
\end{center}
\caption{Preservation of the constraint and of the energy.}
\label{fig:conservation}
\end{figure}

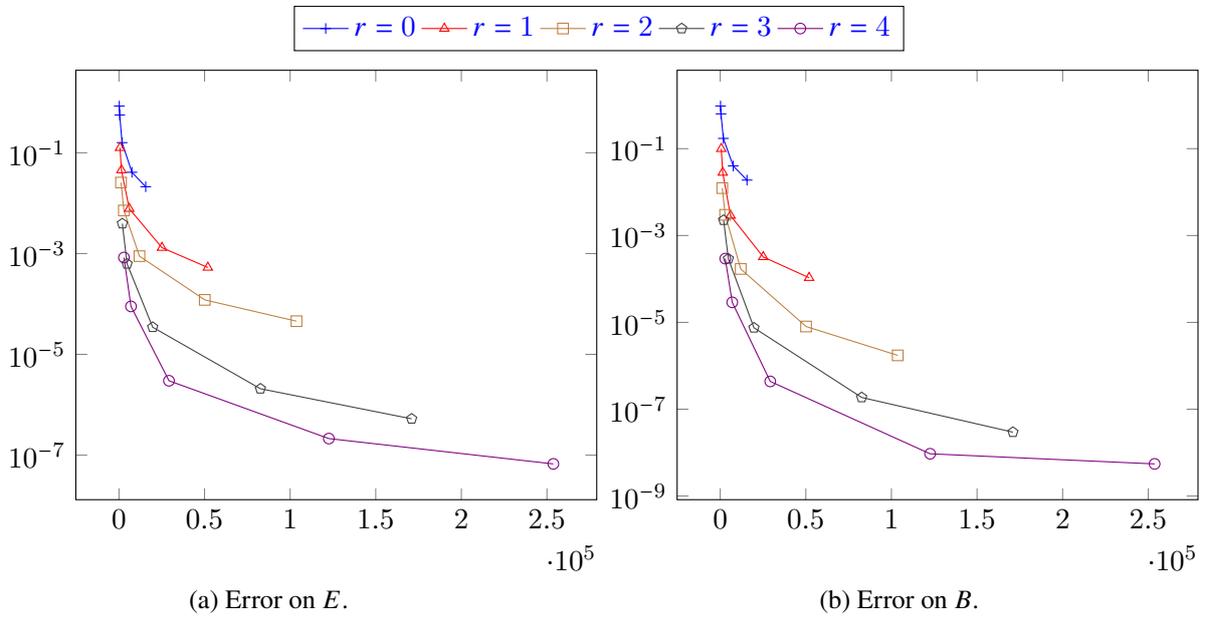
\begin{figure}
\begin{center}
  \ref{leg.conv.sphere}\medskip
  \\
  \begin{minipage}[c]{0.45\columnwidth}\centering
    \begin{tikzpicture}
      \begin{loglogaxis}
        \addplot +[mark=+, style=solid, color=blue] table[x index=4, y index=1] {ssphere_0};
        \addplot +[mark=triangle, style=solid, color=red] table[x index=4, y index=1] {ssphere_1};
        \addplot +[mark=square, style=solid, color=brown] table[x index=4, y index=1] {ssphere_2};
        \addplot +[mark=pentagon, style=solid, color=darkgray] table[x index=4, y index=1] {ssphere_3};
        \addplot +[mark=o, style=solid, color=violet] table[x index=4, y index=1] {ssphere_4};
      \end{loglogaxis}
    \end{tikzpicture}
    \subcaption{Error on $E$.}
  \end{minipage}
  \hfill
  \begin{minipage}[c]{0.5\columnwidth}\centering
    \begin{tikzpicture}
      \begin{loglogaxis}
        \addplot +[mark=+, style=solid, color=blue] table[x index=4, y index=3] {ssphere_0};
        \addplot +[mark=triangle, style=solid, color=red] table[x index=4, y index=3] {ssphere_1};
        \addplot +[mark=square, style=solid, color=brown] table[x index=4, y index=3] {ssphere_2};
        \addplot +[mark=pentagon, style=solid, color=darkgray] table[x index=4, y index=3] {ssphere_3};
        \addplot +[mark=o, style=solid, color=violet] table[x index=4, y index=3] {ssphere_4};
      \end{loglogaxis}
    \end{tikzpicture}
    \subcaption{Error on $B$.}
  \end{minipage}
\end{center}
\caption{Errors vs.~number of degrees of freedom for the smooth solution.}
\label{fig:convrate.smoothdof}
\end{figure}
\begin{table}
\centering
\begin{tabular}{|l|*{5}{c|}}\hline
Degree $r$
&$0$&$1$&$2$
&$3$&$4$\\\hline\hline
Error on $E$ &$1.85$&$2.66$&$3.08$&$4.32$&$4.56$\\\hline
Error on $\DIFF E$&$1.73$&$2.20$&$3.27$&$4.37$&$4.75$\\\hline
Error on $B$ &$1.96$&$3.34$&$4.36$&$5.49$&$5.51$\\\hline
\end{tabular}
\caption{Convergence rates for the smooth solution.}
\label{tab:convrate.smooth}
\end{table}

\section{Conclusion}\label{sec:conclusion}

We developed a discrete de Rham complex on manifolds, that is applicable on curved meshes. We designed a framework of full and trimmed polynomial spaces on such meshes, identifying the required assumptions that enabled us to adapt the setting of \cite{Bonaldi.Di-Pietro.ea:23} to the case of manifolds. The discrete complex, based on the exterior calculus presentation of the de Rham complex, has the same cohomology as the continuous de Rham complex, and has an arbitrary order of accuracy. 

We showed that if the mesh cells can be embedded into flat spaces in a way that ensures that the composition of internal and boundary mappings are linear, then the assumptions of our framework are satisfied. An explicit construction of these embeddings was provided for a range of curved 3- and 4-edge cells in 2D, which typically arise when mapping triangular/quadrangular cells through the charts of the manifold. The ideas behind this construction could in principle be used to cover other kind of cells (or 3D cells), but would require technical efforts and is the topic of interesting further research on mesh generation. 

As an illustration of the usage of this discrete complex, we considered the case of the Maxwell equations written in differential forms on a sphere and on a torus. The complex property enabled us to design a scheme that preserves both the energy and the constraint associated with this model.
We constructed smooth and non-smooth solutions, and numerically assessed the efficiency of the scheme in terms of its preservation properties and accuracy.

\appendix

\section{Construction of suitable charts in dimension $n=2$} \label{sec:example.construction}

In this appendix, we provide a few examples of constructions that satisfy Assumption \ref{assumption:straighten} for various topologies.
They can be used where a curved boundary is required and combined with flat cells elsewhere.
Actually, when a cell is flat in a chart, we can simply restrict this chart to get a suitable $I_f$ verifying this assumption.
Therefore, our method is really a generalization of the polynomials on $\Real^d$, 
and we only need to create more exotic spaces on curved cells.
While there is no fully generic way to construct $I_f$, we give below some possible constructions in several cases.

The idea is to apply a hierarchical construction of the mappings $I_f$ in Assumption \ref{assumption:straighten}. In the 2D case we consider here, this means that we start from the diffeomorphisms $I_f:[0,1]\to \overline{f}$ corresponding the 1-cells (edges) $f$, which are often easy to describe. Then, for each 2-cell $f$, we use the mappings $(I_{f'})_{f'\in\Delta_1(f)}$ of its edges to construct a diffeomorphism $I_f:[0,1]^2\to \overline{f}$ (or sometimes its inverse $J_f=(I_f)^{-1}$, which is equivalent) that satisfies the compatibility condition of Assumption \ref{assumption:straighten}, that is, the linearity of the composition of mappings. The initial approach to find the suitable $I_f$ is to pick two edges of $f$ connected by a vertex and try and foliate this 2-cell using the coordinates on these edges; of course, maintaining the compatibility with the other edges is where the challenge arises, and requires to adjust this first idea to the particular shape of the element by modifying the foliation as we move closer to the other edges. On principle, this approach can also be extended to other shapes and to higher dimensions, but will still require some level of case-by-case adjustment and, probably, extensive calculations in complicated geometries.

In practice, we work in coordinates, and therefore start from a given chart $J^0_f\st \overline{f} \to \Real^2$.
When we mention the injection of an edge $E \in \FM{1}(f)$, 
we are actually speaking about the composition 
$J^0_f \circ \INJ_{f,E} \circ I_E$.
Likewise, we will not directly build maps from $f$ to $\Real^2$, 
but rather some diffeomorphism $J_f$ between two subsets of $\Real^2$.
The actual map that verifies Assumption \ref{assumption:straighten} is $I_f:=(J_f \circ J^0_f)^{-1}$.

\subsection{Triangle with two curved edges} 

Let $f \in \FM{2}(\Mh)$ be a face with $3$ edges.
In most cases, especially since $f$ is supposed to be small, we can find a chart in which one of the edges of $f$ is straight.
In this case, the injections of the edges are given by 
\begin{align*}
  I_1(t) ={}& (h_1(t),g_1(t))\\
  I_2(t) ={}& (h_2(t),g_2(t))\\
  I_3(t) ={}& (0,t),
\end{align*}
where $t \in [0,1]$ and $h_i,g_i$ are $C^2$. See Figure \ref{fig:2curves} for an illustration (in which each $I_r$ corresponds to $E_r$).

\begin{figure}[h]
\centering
  \includegraphics[width=0.5\linewidth]{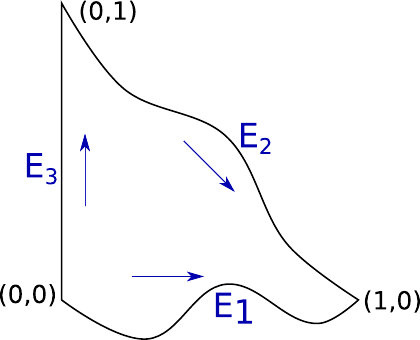}
\caption{$f$ with three edges, one of which being straight in a chosen chart.}
\label{fig:2curves}
\end{figure}

Moreover assume that, for $i \in \lbrace 1,2 \rbrace$, $h_i(0) = 0$, $h_i(1) = 1$, and $h_i$ is strictly increasing, 
and that $g_1(0) = 0$, $g_1(1) = 0$, $g_2(0) = 1$, and $g_2(1) = 0$.
Let $G_i(x) := g_i(h_i^{-1}(x))$, $i \in\lbrace1,2\rbrace$.
\begin{lemma}
  If the function 
  \[
    J_f(x,y) := \frac{1}{G_2(x) - G_1(x)} \begin{pmatrix} 
      (y - G_1(x))h_2^{-1}(x) + (G_2(x) - y)h_1^{-1}(x) \\
      (y - G_1(x))(1 - h_2^{-1}(x))
    \end{pmatrix}
  \]
  is a $C^2$ diffeomorphism, then it
  satisfies Assumption \ref{assumption:straighten}.
\end{lemma}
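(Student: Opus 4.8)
The plan is to verify Assumption \ref{assumption:straighten} directly, edge by edge, since for a triangle the only subcells $f'\in\FM{1}(f)$ are the three edges. Following the convention set out at the start of this appendix, the chart that actually realises the assumption is $I_f=(J_f\circ J^0_f)^{-1}$, so that $(I_f)^{-1}=J_f\circ J^0_f$. For an edge $E$ of $f$ with injection $I_E$, the quantity $\TRF_{f,E}$ to be shown affine therefore factors as
\[
\TRF_{f,E}=(I_f)^{-1}\circ\INJ_{f,E}\circ I_E = J_f\circ\big(J^0_f\circ\INJ_{f,E}\circ I_E\big),
\]
and the bracketed term is exactly the in-chart edge parametrisation denoted $I_1$, $I_2$ or $I_3$ in the statement. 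Thus the assumption reduces to the affinity of the three maps $J_f\circ I_i$, $i\in\{1,2,3\}$. Since $J_f$ is a $C^2$ diffeomorphism by hypothesis and $J^0_f$ is a chart, $I_f$ is automatically a $C^2$ diffeomorphism, so no regularity remains to be checked and only the affinity is at stake.

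First I would treat the straight edge $I_3(t)=(0,t)$. The endpoint normalisations give $h_i^{-1}(0)=0$ (since $h_i(0)=0$) and $G_1(0)=g_1(0)=0$, $G_2(0)=g_2(0)=1$. Substituting $(x,y)=(0,t)$ into $J_f$ kills the whole first component (both $h_i^{-1}(0)=0$) while the second component collapses to $t$, yielding $J_f(I_3(t))=(0,t)$, which is affine.

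Next I would handle $I_1$ and $I_2$ by exploiting that each edge lies on one of the graphs $y=G_1(x)$ or $y=G_2(x)$. Along $I_1$, writing $x=h_1(t)$ we have $h_1^{-1}(x)=t$ and $y=g_1(t)=G_1(x)$, so the factor $y-G_1(x)$ vanishes; only $(G_2(x)-y)h_1^{-1}(x)=(G_2(x)-G_1(x))\,t$ survives in the first slot and, after dividing by $G_2(x)-G_1(x)$, gives $J_f(I_1(t))=(t,0)$. Along $I_2$, with $x=h_2(t)$ one has $h_2^{-1}(x)=t$ and $y=g_2(t)=G_2(x)$, so now $G_2(x)-y$ vanishes and $y-G_1(x)=G_2(x)-G_1(x)$ cancels the prefactor, producing $J_f(I_2(t))=\big(h_2^{-1}(x),1-h_2^{-1}(x)\big)=(t,1-t)$. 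Both are affine.

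Having the three images $(t,0)$, $(t,1-t)$, $(0,t)$ — the sides of the reference triangle with vertices $(0,0)$, $(1,0)$, $(0,1)$ — I would conclude that all $J_f\circ I_i$ are affine, so Assumption \ref{assumption:straighten} holds. The computation is entirely elementary; I do not expect a genuine obstacle. The only points requiring care are the bookkeeping that identifies the appendix's $J_f$ with the inverse chart $(I_f)^{-1}$ up to composition with the fixed chart $J^0_f$, and the observation that the endpoint normalisations imposed on $h_i$ and $g_i$ are precisely what force each of the three compositions to land affinely on a side of the reference triangle.
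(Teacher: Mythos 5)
Your proposal is correct and follows essentially the same route as the paper: a direct edge-by-edge substitution showing $J_f\circ I_1(t)=(t,0)$, $J_f\circ I_2(t)=(t,1-t)$ and $J_f\circ I_3(t)=(0,t)$, hence all three compositions are affine. The only difference is cosmetic -- you make explicit the appendix's bookkeeping convention $I_f=(J_f\circ J^0_f)^{-1}$ before computing, which the paper states once in the preamble of Appendix \ref{sec:example.construction} rather than inside this proof.
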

\begin{proof}
  We need to check that $J_f\circ I_r$ is linear for all $r=1,2,3$.
  By definition, $G_i(h_i(t)) = g_i(t)$ so, for $E_1$, we have
  \begin{align*}
    J_f(I_1(t)) ={}& J_f(h_1(t),g_1(t)) \\
    ={}& \frac{1}{G_2(h_1(t)) - g_1(t)} \begin{pmatrix}
      (g_1(t) - g_1(t))h_2^{-1}(h_1(t)) + (G_2(h_1(t)) - g_1(t))t \\
      (g_1(t) - g_1(t))(1 - h_2^{-1}(h_1(t))
    \end{pmatrix}\\
    ={}& \frac{1}{G_2(h_1(t)) - g_1(t)} \begin{pmatrix}
       (G_2(h_1(t)) - g_1(t))t \\
      0
    \end{pmatrix}\\
    ={}& \begin{pmatrix} t \\ 0 \end{pmatrix}.
  \end{align*}
  For $E_2$, we have
  \begin{align*}
    J_f(I_2(t)) ={}& J_f(h_2(t),g_2(t)) \\
    ={}& \frac{1}{g_2(t) - G_1(h_2(t))} \begin{pmatrix}
      (g_2(t) - G_1(h_2(t)))t + (g_2(t) - g_2(t))h_1^{-1}(h_2(t)) \\
      (g_2(t) - G_1(h_2(t)))(1 - t)
    \end{pmatrix}\\
    ={}& \begin{pmatrix} t \\ 1 - t \end{pmatrix}.
  \end{align*}
  Finally, for $E_3$,
  noticing that $h_i^{-1}(0) = 0$, we have $G_1(0) = 0$ and $G_2(0) = 1$
  and thus
  \begin{align*}
    J_f(I_3(t)) ={}& J_f(0,t) \\
    ={}& \frac{1}{G_2(0) - G_1(0)} \begin{pmatrix}
      (t - G_1(0))h_2^{-1}(0) + (G_2(0) - t)h_1^{-1}(0)\\
      (t - G_1(0))(1 - h_2^{-1}(0))
    \end{pmatrix}\\
    ={}& \begin{pmatrix} 0\\t\end{pmatrix}.
  \end{align*}
\end{proof}

\subsection{Cone section with a curved edge} \label{sec:cone.curved}

Let $f \in \FM{2}(\Mh)$ be a face with $4$ edges such that, when expressed in polar coordinates $(r,\theta)$ on the chart of $f$, 
the injections of the edges are given by
\begin{align*}
  I_1(t) ={}& (g_1(t),h(t)),\\
  I_2(t) ={}& (t + 1, \theta_1),\\
  I_3(t) ={}& (g_3(t), h(t)), \\
  I_4(t) ={}& (t (a - 1) + 1, \theta_2),
\end{align*}
where $t \in [0,1]$, and the functions $h,g_1,g_3$ are $C^2$ and satisfy
\begin{align*}
  h(0) = \theta_1,&\quad h(1) = \theta_2,\\
  g_1(0) = 2,&\quad g_1(1) = a,\\
  g_3(0) = 1,&\quad g_3(1) = 1.
\end{align*}
See Figure \ref{fig:cone.curved} for an illustration. As explained in Section \ref{sec:build.mesh}, such cells naturally appears as transitions between flat cells (fully contained in one of the charts of the selected atlas) and curved boundaries, see Figure \ref{fig:mesh.disk}.

\begin{figure}[h]
\centering
  \includegraphics[width=0.5\linewidth]{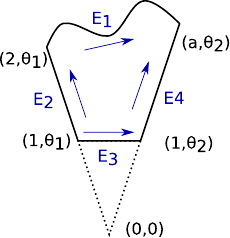}
\caption{$f$ with four edges, corresponding to a section of cone with a curved edge.}
\label{fig:cone.curved}
\end{figure}

Moreover, assume that $h$ is strictly increasing with a derivative bounded from below,
and that $g_1 - g_3$ is strictly positive (meaning that $E_1$ and $E_3$ do not intersect).
\begin{lemma}\label{lem:4-edgecell}
  The following function $I_f\st[0,1]^2\to\overline{f}$ satisfies Assumption \ref{assumption:straighten}:
  \begin{equation*}
    I_f(t,p) := \begin{pmatrix} p g_1(t) + (1 - p) g_3(t) \\ h(t) \end{pmatrix}.
  \end{equation*}
\end{lemma}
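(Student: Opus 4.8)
The plan is to verify the two requirements of Assumption \ref{assumption:straighten} directly: that $I_f$ is a $C^2$-diffeomorphism onto $\overline{f}$, and that $\TRF_{f,E_r} = J_f \circ \INJ_{f,E_r} \circ I_{E_r}$ is affine for each of the four edges $E_r$, $r=1,\dots,4$. Rather than invert $I_f$ explicitly (as was done in the triangle case), I would exploit the identity $\TRF_{f,E_r}=J_f\circ(\INJ_{f,E_r}\circ I_{E_r})$: it suffices to exhibit, for each edge, an \emph{affine} map $\alpha_r:[0,1]\to[0,1]^2$ satisfying $I_f\circ\alpha_r = \INJ_{f,E_r}\circ I_{E_r}$ in the polar chart, since then $\TRF_{f,E_r}=J_f\circ I_f\circ\alpha_r=\alpha_r$ is affine. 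The natural candidates identify each edge with one side of the square: $\alpha_1(t)=(t,1)$, $\alpha_3(t)=(t,0)$, $\alpha_2(t)=(0,t)$ and $\alpha_4(t)=(1,t)$.

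I would then verify these four identities by direct substitution, using the prescribed endpoint values. For $E_1$ and $E_3$, $I_f(t,1)=(g_1(t),h(t))=I_1(t)$ and $I_f(t,0)=(g_3(t),h(t))=I_3(t)$ hold term by term. For $E_2$, the values $g_1(0)=2$, $g_3(0)=1$, $h(0)=\theta_1$ give $I_f(0,t)=(t\,g_1(0)+(1-t)g_3(0),h(0))=(t+1,\theta_1)=I_2(t)$; similarly $g_1(1)=a$, $g_3(1)=1$, $h(1)=\theta_2$ give $I_f(1,t)=(1+t(a-1),\theta_2)=I_4(t)$. Since each $\alpha_r$ is affine, so is each $\TRF_{f,E_r}$.

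For the diffeomorphism requirement I would compute the Jacobian of $I_f$ in the $(r,\theta)$ chart, obtaining
\[
\DET \JAC I_f(t,p) = -\big(g_1(t)-g_3(t)\big)\,h'(t),
\]
which is nonzero because $g_1-g_3>0$ by hypothesis and $h'>0$ (as $h$ is strictly increasing with derivative bounded from below). Global injectivity follows from the same two facts: $h(t_1)=h(t_2)$ forces $t_1=t_2$ by monotonicity, and then $(p_1-p_2)\big(g_1(t_1)-g_3(t_1)\big)=0$ forces $p_1=p_2$. Combined with $C^2$-regularity, the inverse function theorem together with global injectivity yields a $C^2$-diffeomorphism onto its image; the radial coordinate $r=p\,g_1(t)+(1-p)g_3(t)$ stays positive (being a convex combination of positive values), so post-composition with the polar-to-Cartesian chart map remains a diffeomorphism. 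Finally, the four boundary identities above show that $I_f$ sends $\partial([0,1]^2)$ onto $\bigcup_r \overline{E_r}=\partial f$, so its image is exactly $\overline{f}$.

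The bulk of the argument is mechanical; the one point requiring genuine care is promoting the local, Jacobian-based non-degeneracy to a \emph{global} diffeomorphism onto $\overline{f}$. This needs the global injectivity argument above, the positivity of the radial coordinate to legitimise passing through the polar chart, and the boundary-to-boundary matching to identify the image with $\overline{f}$ rather than merely an open subset of the plane. The affineness computations, while routine, must invoke the precise endpoint values of $g_1,g_3,h$ so that the convex-combination coefficients collapse to the expected linear expressions.
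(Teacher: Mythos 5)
Your proposal is correct and follows essentially the same route as the paper: both proofs exhibit the natural affine parametrisations of the sides of $[0,1]^2$ (your $\alpha_r$, the paper's $T_i$) and verify $I_f\circ\alpha_r = I_r$ by direct substitution of the endpoint values of $g_1,g_3,h$. The only difference is the diffeomorphism step, where the paper simply writes down the explicit $C^2$ inverse $(a,b)\mapsto \bigl(\tfrac{a-g_3(h^{-1}(b))}{g_1(h^{-1}(b))-g_3(h^{-1}(b))},\, h^{-1}(b)\bigr)$, while you reach the same conclusion via the Jacobian computation, global injectivity and the inverse function theorem; both arguments are valid and rest on the same two hypotheses ($h$ strictly increasing with derivative bounded below, $g_1-g_3>0$).
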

\begin{proof}
  Since $h$ is a $C^2$-diffeomorphism and $g_1-g_3>0$, it can easily be verified that $I_f$ is a $C^2$-isomorphism with inverse 
  \[
  \begin{pmatrix}a\\b\end{pmatrix}\mapsto \begin{pmatrix}\frac{a-g_3(h^{-1})(b))}{g_1(h^{-1}(b))-g_3(h^{-1}(b))}\\ h^{-1}(b)\end{pmatrix}.
  \]
  In order to show the compatibility with the trace, we exhibit for all $i=1,2,3,4$ an explicit affine function 
  $T_i$ such that $I_f \circ T_i = I_i$; this shows that $J_f \circ I_i = T_i$ is affine, as required by Assumption \ref{assumption:straighten}.
  For this, we simply readily check that 
  \begin{align*}
    I_f(t,0) ={}& \begin{pmatrix} g_3(t) \\ h(t) \end{pmatrix} = I_3(t),\\
    I_f(t,1) ={}& \begin{pmatrix} g_1(t) \\ h(t) \end{pmatrix} = I_1(t),\\
    I_f(0,p) ={}& \begin{pmatrix} p (g_1(0) - g_3(0)) + g_3(0) \\ h(0) \end{pmatrix} = \begin{pmatrix} p + 1 \\ \theta_1 \end{pmatrix} = I_2(p),\\
    I_f(1,p) ={}& \begin{pmatrix} p (g_1(1) - g_3(1)) + g_3(1) \\ h(1) \end{pmatrix} = \begin{pmatrix} p (a - 1) + 1 \\ \theta_2 \end{pmatrix} = I_4(p).
  \end{align*}
  so that $T_1(t,p)=(t,0)$, $T_2(t,p)=(t,1)$, $T_3(t,p)=(0,p)$ and $T_4(t,p)=(1,p)$ are simply the natural parametrisation of the edges of $[0,1]^2$.
\end{proof}

\subsection{Quadrilateral with four curved edges}

Let $f \in \FM{2}(\Mh)$ be a face with $4$ edges such that, in a given chart of $f$, the injections of the edges are given by 
\begin{align*}
  I_i(t) ={}& (h_i(t),g_i(t)), \quad i \in\lbrace1,2,3,4\rbrace,
\end{align*}
where $t \in [0,1]$.
Assume moreover that the functions $h_i,g_i$ are $C^2$ and that
\begin{align*}
  (h_1(0),g_1(0)) ={}& (0,0),\quad (h_1(1),g_1(1)) = (1,0),\\
  (h_2(0),g_2(0)) ={}& (1,0),\quad (h_2(1),g_2(1)) = (a,b),\\
  (h_3(0),g_3(0)) ={}& (0,1),\quad (h_3(1),g_3(1)) = (a,b),\\
  (h_4(0),g_4(0)) ={}& (0,0),\quad (h_4(1),g_4(1)) = (0,1).
\end{align*}
This situation is illustrated in Figure \ref{fig:quad}.
\begin{figure}[h]
\centering
  \includegraphics[width=0.5\linewidth]{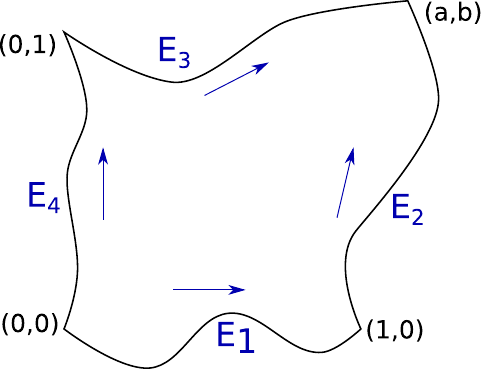}
\caption{$f$ with four curved edges.}
\label{fig:quad}
\end{figure}
Let 
  \[
    I_f(t,p) := \begin{pmatrix}
      \big(p\frac{h_3(t)}{a} + (1 - p)h_1(t)\big)h_2(p) + \big(1 - (p\frac{h_3(t)}{a} + (1 - p)h_1(t))\big)h_4(p)\\
      \big(t\frac{g_2(p)}{b} + (1 - t)g_4(p)\big)g_3(t) + \big(1 - (t\frac{g_2(p)}{b} + (1 - t)g_4(p))\big)g_1(t)
    \end{pmatrix}.
  \]
\begin{lemma}
  If $I_f$ is bijective and $\DET(\JAC I_f) \neq 0$ then
  the function $I_f$
  satisfies Assumption \ref{assumption:straighten}.
\end{lemma}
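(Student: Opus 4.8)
The plan is to reproduce, almost verbatim, the two-step strategy of Lemma \ref{lem:4-edgecell}. First I would settle the regularity: the map $I_f$ is assembled from the $C^2$ functions $h_i,g_i$, hence is itself $C^2$, and the hypotheses that $I_f$ is bijective and that $\DET(\JAC I_f)\neq 0$ let me invoke the inverse function theorem to conclude that $J_f:=(I_f)^{-1}$ is $C^2$ as well. Thus $I_f$ is a $C^2$-diffeomorphism, which is the first requirement of Assumption \ref{assumption:straighten}.

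Second, I would verify the affine compatibility condition on each edge, i.e.\ that $J_f\circ I_i$ is affine for $i=1,2,3,4$ (here $I_i$ denotes the edge injection, playing the role of $\INJ_{f,E_i}\circ I_{E_i}$ in the definition of $\TRF_{f,E_i}$). As in Lemma \ref{lem:4-edgecell}, rather than computing with the unwieldy closed form of $J_f$, I would guess affine maps $T_i:[0,1]\to[0,1]^2$ satisfying $I_f\circ T_i=I_i$; composing on the left with $J_f$ then gives $J_f\circ I_i=T_i$, which is affine, and finishes the proof. The natural candidates are the standard parametrisations of the four sides of the unit square, namely $T_1(t)=(t,0)$, $T_2(t)=(1,t)$, $T_3(t)=(t,1)$ and $T_4(t)=(0,t)$.

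The remaining work is to confirm these four identities by evaluating $I_f$ on the sides of $[0,1]^2$ and checking that the transfinite-interpolation (Coons-type) weights collapse correctly, using the prescribed endpoint values of the $h_i,g_i$. For example, on the side $p=0$ one has $h_2(0)=1$, $h_4(0)=0$ and $g_2(0)=g_4(0)=0$, so the bracketed factors reduce and $I_f(t,0)=(h_1(t),g_1(t))=I_1(t)$; on the side $t=1$ the relations $h_3(1)=a$ and $g_3(1)=b$ give $p\tfrac{h_3(1)}{a}+(1-p)h_1(1)=1$ and $\tfrac{g_2(p)}{b}g_3(1)=g_2(p)$, whence $I_f(1,p)=(h_2(p),g_2(p))=I_2(p)$; the two remaining sides are handled identically using $(h_3(1),g_3(1))=(a,b)$ and $(h_4(1),g_4(1))=(0,1)$. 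I expect no conceptual obstacle here, since bijectivity and non-degeneracy of the Jacobian are assumed rather than to be proved—unlike in Lemma \ref{lem:4-edgecell}, where an explicit inverse had to be exhibited—so the only genuine effort is the careful bookkeeping needed to see each of the four convex-combination factors simplify to $0$ or $1$ on the corresponding side.
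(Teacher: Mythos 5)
Your proposal is correct and follows essentially the same route as the paper: exhibit the standard affine parametrisations $T_1(t)=(t,0)$, $T_2(t)=(1,t)$, $T_3(t)=(t,1)$, $T_4(t)=(0,t)$ of the sides of $[0,1]^2$, check via the endpoint values of the $h_i,g_i$ that $I_f\circ T_i=I_i$, and conclude that $J_f\circ I_i=T_i$ is affine (the paper leaves the inverse-function-theorem step implicit, which you spell out). Your sample verifications on the sides $p=0$ and $t=1$ match the paper's computations, and the remaining two sides indeed collapse the same way.
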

\begin{proof}
  As in the proof of Lemma \ref{lem:4-edgecell}, we show that the composition of $I_f$ with the natural linear parametrisations of the edges of $[0,1]^2$ give $I_i$ for $i=1,2,3,4$.
  For $E_1$, we have
  \[
    I_f(t,0) 
    =  \begin{pmatrix}
      \big(0 \frac{h_3(t)}{a} + (1 - 0 )h_1(t)\big)\overbrace{h_2(0 )}^{=1} + \big(1 - (0 \frac{h_3(t)}{a} + (1 - 0 )h_1(t))\big)\overbrace{h_4(0 )}^{=0}\\
      \big(t\underbrace{\frac{g_2(0)}{b}}_{=0} + (1 - t)\underbrace{g_4(0 )}_{=0}\big)g_3(t) + \big(1 - (t\underbrace{\frac{g_2(0)}{b}}_{=0} + (1 - t)\underbrace{g_4(0 )}_{=0})\big)g_1(t)
    \end{pmatrix}
    = \begin{pmatrix} h_1(t) \\ g_1(t) \end{pmatrix}.
  \]
  For $E_2$, we have
  \[
    I_f(1,t)
    = \begin{pmatrix}
      \big(t\overbrace{\frac{h_3(1)}{a}}^{=1} + (1 - t)\overbrace{h_1(1)}^{=1}\big)h_2(t) + \big(1 - (t\overbrace{\frac{h_3(1)}{a}}^{=1} + (1 - t)\overbrace{h_1(1)}^{=1}\big)h_4(t)\\
      \big(\frac{g_2(t)}{b} + (1 - 1)g_4(t)\big)\underbrace{g_3(1)}_{=b} + \big(1 - (\frac{g_2(t)}{b} + (1 - 1)g_4(t))\big)\underbrace{g_1(1)}_{=0}
    \end{pmatrix}\\
    = \begin{pmatrix} h_2(t)\\g_2(t) \end{pmatrix}.
   \]
  For $E_3$, we have
  \[
    I_f(t,1)
    = \begin{pmatrix}
      \big(\frac{h_3(t)}{a} + (1 - 1)h_1(t)\big)\overbrace{h_2(1)}^{=a} + \big(1 - (\frac{h_3(t)}{a} + (1 - 1)h_1(t))\big)\overbrace{h_4(1)}^{=0}\\
      \big(t\underbrace{\frac{g_2(1)}{b}}_{=1} + (1 - t)\underbrace{g_4(1)}_{=1}\big)g_3(t) + \big(1 - (t\underbrace{\frac{g_2(1)}{b}}_{=1} + (1 - t)\underbrace{g_4(1)}_{=1})\big)g_1(t)
    \end{pmatrix}
   = \begin{pmatrix} h_3(t) \\ g_3(t) \end{pmatrix}.
   \]
  For $E_4$, we have
  \[
    I_f(0,t)
    = \begin{pmatrix}
      \big(t\overbrace{\frac{h_3(0)}{a}}^{=0} + (1 - t)\overbrace{h_1(0)}^{=0}\big)h_2(t) + \big(1 - (t\overbrace{\frac{h_3(0)}{a}}^{=0} + (1 - t)\overbrace{h_1(0)}^{=0})\big)h_4(t)\\
      \big(0\frac{g_2(t)}{b} + (1 - 0)g_4(t)\big)\underbrace{g_3(0)}_{=1} + \big(1 - (0\frac{g_2(t)}{b} + (1 - 0)g_4(t))\big)\underbrace{g_1(0)}_{=0}
    \end{pmatrix}\\
    = \begin{pmatrix} h_4(t) \\ g_4(t) \end{pmatrix}.
    \qedhere
   \]
\end{proof}

\section{Derivation of the Maxwell equations}\label{sec:tensor.calculus}

In this section, we first recall some generic identities on tensor calculus and then show that, in the absence of shift, \eqref{eq:Maxwell} reduces to \eqref{eq:Maxwell.general}.

Let $g = (g_{\mu\nu})$ denote the metric on the tangent bundle of a generic $n$-dimensional Lorentzian manifold,
and let $(g^{\mu\nu}) := (g_{\mu\nu})^{-1}$.
Assuming $\det g < 0$, the volume form is given in coordinates by 
\begin{equation} \label{eq:volcoord}
  (\vol)_{\mu_1\dots\mu_n} = \sqrt{-\det g}\ \epsilon_{\mu_1\dots\mu_n}
\end{equation}
where $\epsilon_{\mu_1\dots \mu_n}$ denotes the Levi--Civita symbol. 
Given a $1$-form $\alpha := \alpha_\mu \dx{\mu}$ and a vector field $b := b^{\mu} \partial_{\mu}$, 
we define the sharp $\sharp$ and flat $\flat$ operator to raise and lower indices by:
\begin{equation*}
    (\alpha^\sharp)^{\nu} := g^{\nu\mu} \alpha_\mu ,\qquad
    (b^\flat)_{\nu} := g_{\nu\mu} b^{\mu} .
\end{equation*}
We use the Hodge star operator $\star$ to identify $k$-forms with $(n-k)$-forms.
In index notation, it is defined pointwise on the linear basis of $k$-forms by
\begin{equation}\label{eq:Hodge.star.coords}
  \star (\dx{\mu_1} \wedge \dots \wedge \dx{\mu_k}) := \frac{\sqrt{- \det g}}{(n-k)!} \epsilon^{\mu_1\dots \mu_k}{}_{\nu_{k+1}\dots \nu_n} \dx{\nu_{k+1}} \wedge \dots \wedge \dx{\nu_n} 
\end{equation}
where
\[
\epsilon^{\mu_1\dots \mu_k}{}_{\nu_{k+1}\dots \nu_n} := g^{\mu_1 \nu_1} \dots g^{\mu_k \nu_k} \epsilon_{\nu_1\dots \nu_n}.
\]
In general, the product of two $\epsilon$ symbols is given by
\begin{equation} \label{eq:espdet}
  \epsilon^{\mu_1 \dots \mu_n} \epsilon_{\nu_1 \dots \nu_n} = \det(g^{-1}) \delta^{\mu_1 \dots \mu_n}_{\nu_1 \dots \nu_n},
\end{equation}
where $\delta^{\mu_1 \dots \mu_n}_{\nu_1 \dots \nu_n}$ is the generalized Kronecker delta 
(equal to $1$ if $\mu_1 \dots \mu_n$ is an even permutation of $\nu_1 \dots \nu_n$, $-1$ if it is an odd permutation and $0$ otherwise).
The generalized Kronecker delta can also be used to compute wedge products in index notation.
The wedge product of two $1$-forms $\alpha=\alpha_\mu\dx{\mu}$ and $\beta=\beta_\nu\dx{\nu}$ is given by
\begin{equation}\label{eq:wedge2Kronecker}
  \alpha \wedge \beta = \frac{1}{2} \delta^{\mu'\nu'}_{\mu\nu} \alpha_{\mu'} \beta_{\nu'} \dx{\mu}\wedge\dx{\nu} .
\end{equation}

In the rest of this appendix, we specialise to the case of $2+1$ dimensions and adopt the notations \eqref{eq:g.2metric} and \eqref{eq:defn} for the metric and the future pointing unit normal:
\begin{equation*}
  (g_{\mu\nu}) := \begin{pmatrix} -N^2 + \vert\beta\vert_{\gamma}^2 & \beta_j \\ \beta_i & \gamma_{ij} \end{pmatrix}\,,\qquad
   n := (-N\dt)^\sharp.
\end{equation*}
The inverse matrix of $g$ is given by 
\[
g^{-1} = (g^{\mu\nu}) = \begin{pmatrix} -\frac{1}{N^2} & \frac{\beta_j}{N^2} \\ 
\frac{\beta_i}{N^2} & \gamma^{ij} - \frac{\beta_i \beta_j}{N^2} \end{pmatrix},
\]
  and $n = \frac{1}{N} \partial_t - \frac{1}{N}\beta^i \partial_i$.
  The determinant of $g$ and $\gamma$ are related by $\sqrt{-\det g} = N \sqrt{\det \gamma}$.
  \begin{lemma}
    The contraction of the volume form with $n$ is given by
    \begin{equation} \label{eq:invol}
      \contr{n} \vol = \spvol + \dt \wedge \contr{\beta^\spsharp} \spvol
    \end{equation}
    where $\spvol$ is the spatial volume form (associated with $\gamma$).
  \end{lemma}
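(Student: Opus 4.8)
The plan is to work in an adapted coordinate system $(t,x^1,x^2)$ and reduce the whole identity to a single application of the anti-derivation (Leibniz) property of $\contr{n}$ to a decomposable $3$-form. The two ingredients I would use are exactly the coordinate facts recorded just above the statement: the determinant relation $\sqrt{-\det g}=N\sqrt{\det\gamma}$, and the explicit expression $n=\frac1N\partial_t-\frac1N\beta^i\partial_i=\frac1N(\partial_t-\beta^\spsharp)$, where $\beta^\spsharp=\gamma^{ij}\beta_j\,\partial_i$ is the purely spatial vector field obtained by raising the index of $\beta$ with the spatial metric $\gamma$.

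First I would rewrite the spacetime volume form in factored shape. From \eqref{eq:volcoord} we have $\vol=\sqrt{-\det g}\,\dt\wedge\dx{1}\wedge\dx{2}$, while $\spvol=\sqrt{\det\gamma}\,\dx{1}\wedge\dx{2}$; the determinant relation then gives the clean factorisation $\vol=N\,\dt\wedge\spvol$. This is the key simplification that turns the identity into essentially a one-line computation.

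Next I would apply $\contr{n}$ using $\contr{n}(\dt\wedge\spvol)=(\contr{n}\dt)\wedge\spvol-\dt\wedge(\contr{n}\spvol)$ and evaluate the two contractions separately. The scalar $\contr{n}\dt=\dt(n)$ equals $\tfrac1N$, since $\dt(\partial_t)=1$ and $\dt$ annihilates the spatial field $\beta^\spsharp$. For the second term, the $\partial_t$-component of $n$ contributes nothing because $\spvol$ is spatial (it contains only $\dx{1},\dx{2}$, hence $\contr{\partial_t}\spvol=0$), leaving $\contr{n}\spvol=-\tfrac1N\contr{\beta^\spsharp}\spvol$. Substituting these back and multiplying through by the overall factor $N$ collapses all prefactors and yields exactly $\spvol+\dt\wedge\contr{\beta^\spsharp}\spvol$, which is \eqref{eq:invol}.

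The only point requiring care — and the main (mild) obstacle — is the interpretation of $\spvol$ as a form on the full spacetime rather than merely on a spatial slice, so that the identities $\contr{\partial_t}\spvol=0$ and $\dt(\beta^\spsharp)=0$ are legitimate. Both hold precisely because, in adapted coordinates, $\spvol$ involves only the spatial covectors $\dx{1},\dx{2}$ while $\beta^\spsharp$ involves only the spatial frame fields $\partial_i$. Once this identification is fixed, the argument is purely algebraic and no estimate is needed.
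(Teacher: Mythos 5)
Your proof is correct and follows essentially the same route as the paper: a direct computation in adapted coordinates using $n=\frac1N(\partial_t-\beta^\spsharp)$ and $\sqrt{-\det g}=N\sqrt{\det\gamma}$. The only difference is organizational — you factor $\vol=N\,\dt\wedge\spvol$ and apply the graded Leibniz rule once, whereas the paper expands the contraction component-wise and then identifies the terms $\spvol$ and $\contr{\beta^\spsharp}\spvol$; both amount to the same calculation.
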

  \begin{proof}
    An explicit computation gives
    \begin{equation*}
      \begin{aligned}
        \contr{n}\ \vol = \contr{n} \sqrt{-\det{g}}\ \dt\wedge\dx1\wedge\dx2 
        ={}& \frac{\sqrt{-\det g}}{N}\left( \dx1\wedge\dx2 + \dt\wedge\left[ \beta^1\dx2 - \beta^2\dx1 \right] \right)\\
        ={}& \underbrace{\sqrt{\det\gamma}\dx1\wedge\dx2}_{=\spvol} + \dt\wedge\underbrace{\left[\sqrt{\det\gamma}(\beta^1\dx2 - \beta^2\dx1)\right]}_{=\contr{\beta^\spsharp} \spvol}.
        \qedhere
      \end{aligned}
    \end{equation*}
  \end{proof}


We note that, for any $1$-form $V$ and denoting by $\widetilde{V}$ the restriction of $V$ to a time slice, we have
    \begin{equation} \label{eq:ndotp}
      V^\sharp = \widetilde{V}^{\spsharp} - n V(n).
    \end{equation}
  We readily infer from \eqref{eq:ndotp} and the identity $\contr{n}\contr{n} = 0$ that
  \begin{equation} \label{eq:iEin} 
    \contr{V^\sharp} \contr{n} = \contr{\widetilde{V}^\spsharp} \contr{n}.
  \end{equation} 
  
  We now turn to the Maxwell equations \eqref{eq:Maxwell} and recall the link \eqref{eq:defE}--\eqref{eq:defB} between the electromagnetic field (2-form) $F$, the electric field (1-form) $E$ and the scalar magnetic field $B$. We first consider the question of recovering the electromagnetic field tensor and its Hodge dual from the electric and magnetic fields. 
  
\begin{lemma}
  We can recover $F$ from $E$ and $B$ by the relation 
  \begin{equation} \label{eq:FEB}
  F = n^\flat \wedge E + B\ \contr{n} \vol .
  \end{equation}
\end{lemma}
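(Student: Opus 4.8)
The plan is to decompose the $2$-form $F$ into its parts normal and tangential to $n$, exploiting that $n$ is a \emph{unit} timelike normal, and then to identify the two resulting pieces with the two terms of \eqref{eq:FEB}. The whole argument is metric-algebraic and never uses the shift, so it holds for general $\beta$ in \eqref{eq:g.2metric}.

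First I would record the one structural fact that drives everything: since $n=(-N\dt)^\sharp$ is the future-pointing unit normal, $\contr{n}n^\flat=g(n,n)=-1$. Applying the anti-derivation (Leibniz) rule for the interior product to the $3$-form $n^\flat\wedge F$ gives
\begin{equation*}
  \contr{n}(n^\flat\wedge F)=(\contr{n}n^\flat)\,F-n^\flat\wedge\contr{n}F=-F-n^\flat\wedge\contr{n}F.
\end{equation*}
Rearranging and inserting the definition $E:=-\contr{n}F$ from \eqref{eq:defE} (so that $-n^\flat\wedge\contr{n}F=n^\flat\wedge E$) yields the splitting
\begin{equation*}
  F=n^\flat\wedge E-\contr{n}(n^\flat\wedge F).
\end{equation*}
This already produces the first term of \eqref{eq:FEB}, so it remains only to show that $-\contr{n}(n^\flat\wedge F)=B\,\contr{n}\vol$.

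For the second term I would use that $n^\flat\wedge F$ is a top ($3$-)form, hence $n^\flat\wedge F=\phi\,\vol$ for a scalar $\phi$, so that $\contr{n}(n^\flat\wedge F)=\phi\,\contr{n}\vol$ and it suffices to identify $\phi$. To compute $\phi$ I would apply $\star$ in two ways. On the one hand, $\star(\phi\,\vol)=\phi\,\star\vol=-\phi$, since in $2+1$ Lorentzian signature $\star\star=-\mathrm{id}$ on all degrees, whence $\star\vol=\star\star 1=-1$. On the other hand, the standard identity $\star(\omega\wedge X^\flat)=\contr{X}\star\omega$ relating the Hodge star and the interior product, applied with $X=n$ and the $2$-form $\omega=F$ (using $F\wedge n^\flat=n^\flat\wedge F$ since $F$ has even degree), gives $\star(n^\flat\wedge F)=\contr{n}\star F=B$ by the definition \eqref{eq:defB}. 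Comparing the two computations forces $\phi=-B$, hence $-\contr{n}(n^\flat\wedge F)=B\,\contr{n}\vol$, and combining with the splitting above establishes \eqref{eq:FEB}.

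I expect the only delicate point to be the sign bookkeeping in the Lorentzian setting: the value $\contr{n}n^\flat=-1$, the relation $\star\star=-\mathrm{id}$ in three Lorentzian dimensions, and above all the exact sign in the commutation identity between $\star$ and $\contr{\cdot}$. Each of these can be pinned down once and for all in an orthonormal coframe adapted to the decomposition \eqref{eq:g.2metric}, or read off directly from the coordinate formulas \eqref{eq:volcoord}, \eqref{eq:Hodge.star.coords} and \eqref{eq:espdet}; I would include such a short verification (or cite these identities) so that the signs are unambiguous, since a single misplaced sign would turn $B\,\contr{n}\vol$ into $-B\,\contr{n}\vol$.
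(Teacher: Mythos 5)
Your proof is correct, and it takes a genuinely different route from the paper's. The paper computes in index notation: starting from $B=\contr{n}\star F$ written in coordinates, it expands $B\,\contr{n}\vol$ through the $\epsilon$--$\delta$ identity \eqref{eq:espdet}, splits the generalized Kronecker delta, and recognises the surviving terms as $\pm E\wedge n^\flat$ (via \eqref{eq:wedge2Kronecker}) and $-F$ (via $n_\lambda n^\lambda=-1$), which is the claimed identity rearranged. You instead argue invariantly: the Leibniz rule for $\contr{n}$ applied to the top form $n^\flat\wedge F$, together with $\contr{n}n^\flat=g(n,n)=-1$, yields the splitting $F=n^\flat\wedge E-\contr{n}(n^\flat\wedge F)$, and the scalar $\phi$ in $n^\flat\wedge F=\phi\,\vol$ is pinned down by the Hodge identities $\star\vol=-1$ and $\star(\omega\wedge X^\flat)=\contr{X}\star\omega$. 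Both arguments avoid the shift, so both establish the lemma for general $\beta$, consistent with the lemma's placement before the restriction to $\beta=0$. What your route buys: it is coordinate-free, it isolates the only metric inputs ($g(n,n)=-1$ and two $\star$-identities), and it generalizes essentially verbatim to higher dimension. What it costs: the commutation identity $\star(\omega\wedge X^\flat)=\contr{X}\star\omega$ is stated nowhere in the paper, so you must supply a proof; it follows in a few lines from the convention $\alpha\wedge\star\beta=\langle\alpha,\beta\rangle\vol$, the adjunction $\langle\contr{X}\alpha,\omega\rangle=\langle\alpha,X^\flat\wedge\omega\rangle$, and the non-degeneracy of the wedge pairing, all of which are signature-independent. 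One concrete caution on your sign-checking plan: anchor every sign to the coordinate formula \eqref{eq:Hodge.star.coords}, which is what the appendix and the definitions \eqref{eq:defE}--\eqref{eq:defB} actually use, and under which your three claims ($\star\star=-\mathrm{id}$ in $2+1$ Lorentzian dimensions, $\star\vol=-1$, and the commutation identity with exactly your sign) all hold; do not anchor them to the abstract definition of $\star$ given in Section 3, since in Lorentzian signature that definition ($\langle\star\omega,\mu\rangle\vol=\omega\wedge\mu$) differs from \eqref{eq:Hodge.star.coords} by the factor $\mathrm{sign}(\det g)=-1$, and using it would flip precisely the sign you flagged as the delicate point.
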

\begin{proof}
  Noting from \eqref{eq:defB} that $B = \frac{\sqrt{-\det g}}{2} \epsilon^{\alpha\beta}{}_\gamma F_{\alpha\beta} n^\gamma$, we can, with the help of \eqref{eq:volcoord}, express $B\ \contr{n} \vol$ as
  \begin{align}
      (B\ \contr{n} \vol)_{\mu\nu} 
      ={}& \frac{-\det g}{2} \epsilon^{\alpha\beta}{}_\gamma F_{\alpha\beta} n^\gamma n^\lambda \epsilon_{\lambda\mu\nu} \nonumber\\
      ={}& - \frac{\det g}{2} \epsilon^{\alpha\beta\gamma} \epsilon_{\lambda\mu\nu} n_\gamma n^\lambda F_{\alpha\beta} \nonumber\\
      \overset{\eqref{eq:espdet}}{=}{}& - \frac12 \delta^{\alpha\beta\gamma}_{\lambda\mu\nu}n_\gamma n^\lambda F_{\alpha\beta} \nonumber\\
      ={}& -\frac12 \left( 
      \delta^\alpha_\lambda \delta^{\beta\gamma}_{\mu\nu}
    - \delta^\beta_\lambda \delta^{\alpha\gamma}_{\mu\nu}
      \right) n_\gamma n^\lambda F_{\alpha\beta}
      - \frac12 \delta^\gamma_\lambda \delta^{\alpha\beta}_{\mu\nu}n_\gamma n^\lambda F_{\alpha\beta}.
     \label{eq:prFEB.1}
  \end{align}
  Next, from the definition \eqref{eq:defE} of $E$, we observe that 
  $E_\alpha = -n^\beta F_{\beta\alpha} = \delta^\beta_\lambda n^\lambda F_{\alpha\beta}$. This allows us to express the following terms which appear in \eqref{eq:prFEB.1} in terms of $E\wedge n^\flat$ via the calculations:
  \begin{equation}\label{eq:prFEB.2}
    \begin{aligned}
      \delta^\alpha_\lambda n^\lambda F_{\alpha\beta}\delta^{\beta\gamma}_{\mu\nu} n_\gamma 
      ={}& -E_\beta n_\gamma \delta^{\beta\gamma}_{\mu\nu} \overset{\eqref{eq:wedge2Kronecker}}= -(E\wedge n^\flat)_{\mu\nu},\\
      \delta^\beta_\lambda n^\lambda F_{\alpha\beta}\delta^{\alpha\gamma}_{\mu\nu} n_\gamma 
      ={}& E_\alpha n_\gamma \delta^{\alpha\gamma}_{\mu\nu} \overset{\eqref{eq:wedge2Kronecker}}= (E\wedge n^\flat)_{\mu\nu}.
    \end{aligned}
  \end{equation}
  Moreover, since $n^2 = -1$ and $F$ antisymmetric, we have
  \begin{equation}\label{eq:prFEB.3}
    \delta^\gamma_\lambda n_\gamma n^\lambda \delta^{\alpha\beta}_{\mu\nu} F_{\alpha\beta}
    = 2 n_\lambda n^\lambda F_{\mu\nu} = -2 F_{\mu\nu}.
  \end{equation}
  Plugging \eqref{eq:prFEB.2} and \eqref{eq:prFEB.3} into \eqref{eq:prFEB.1} gives
  $ B\ \contr{n}\vol = E \wedge n^\flat + F$. 
\end{proof}

\begin{lemma}
  The Hodge star of $F$ is given by
  \begin{equation} \label{eq:starFEB}
    \star F = \contr{\spE^\spsharp} \contr{n} \vol - n^\flat B.
  \end{equation}
\end{lemma}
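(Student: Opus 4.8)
The plan is to apply the Hodge star to the expression \eqref{eq:FEB} for $F$ and reduce the result term by term, using the basic interplay between $\star$, the interior product $\contr{n}$, and the wedge with $n^\flat$. By linearity of $\star$ and \eqref{eq:FEB},
\[
\star F = \star(n^\flat\wedge E) + B\,\star(\contr{n}\vol),
\]
so it suffices to treat the two summands separately.

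For the magnetic term, I would first record the elementary Hodge identity $\star(n^\flat) = \contr{n}\vol$ (the special case $\star\eta = \contr{\eta^\sharp}\vol$ for a $1$-form $\eta$, since $(n^\flat)^\sharp = n$), which is readily checked from \eqref{eq:Hodge.star.coords} and is consistent with the explicit form \eqref{eq:invol}. Applying $\star$ once more and using that $\star\star = -\mathrm{Id}$ on $1$-forms in $2+1$ Lorentzian signature (again a direct consequence of \eqref{eq:Hodge.star.coords}), I obtain $\star(\contr{n}\vol) = \star\star(n^\flat) = -n^\flat$. Hence $B\,\star(\contr{n}\vol) = -n^\flat B$, which is exactly the second term of \eqref{eq:starFEB}.

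For the electric term, the key is the standard Hodge identity $\star(\omega\wedge\alpha) = \contr{\alpha^\sharp}\star\omega$ for $1$-forms $\omega,\alpha$, which together with $\star(n^\flat) = \contr{n}\vol$ gives
\[
\star(n^\flat\wedge E) = \contr{E^\sharp}\star(n^\flat) = \contr{E^\sharp}\contr{n}\vol.
\]
At this point I invoke the already established relation \eqref{eq:iEin} with $V = E$, namely $\contr{E^\sharp}\contr{n} = \contr{\spE^\spsharp}\contr{n}$, to trade the full sharp for the spatial one and conclude $\star(n^\flat\wedge E) = \contr{\spE^\spsharp}\contr{n}\vol$, the first term of \eqref{eq:starFEB}. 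Summing the two contributions then yields \eqref{eq:starFEB}. Note that \eqref{eq:iEin} holds for every $1$-form $V$ (its proof only uses \eqref{eq:ndotp} and $\contr{n}\contr{n}=0$), so no separate verification that $E$ is spatial is needed here.

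The main obstacle is sign bookkeeping under the particular Lorentzian convention \eqref{eq:Hodge.star.coords}: the signs in $\star(n^\flat) = \contr{n}\vol$, in $\star\star = -\mathrm{Id}$, and in the wedge--interior identity $\star(\omega\wedge\alpha) = \contr{\alpha^\sharp}\star\omega$ all depend on the signature and on the form degrees and may differ from other textbook conventions. I would pin these down once by evaluating in an orthonormal coframe $(e^0,e^1,e^2)$ adapted to the foliation, in which $n^\flat = -e^0$ and $\contr{n}\vol = e^1\wedge e^2$, so that each of the three identities reduces to a one-line check; the remaining manipulations (linearity and the substitution via \eqref{eq:iEin}) are then purely formal.
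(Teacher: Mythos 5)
Your proof is correct. It follows the same skeleton as the paper's — split $\star F$ term by term according to \eqref{eq:FEB} and finish with \eqref{eq:iEin} applied to $V=E$ — but the evaluation of the two Hodge stars is carried out by a genuinely different technique. The paper computes $\star(n^\flat\wedge E)$ and $\star(B\,\contr{n}\vol)$ by direct index manipulation from the definition \eqref{eq:Hodge.star.coords}, contracting Levi--Civita symbols via \eqref{eq:wedge2Kronecker} and the $\epsilon$--$\delta$ identity \eqref{eq:espdet}. You instead rely on three invariant identities: $\star\eta=\contr{\eta^\sharp}\vol$ for $1$-forms, $\star\star=-\mathrm{Id}$ on $1$-forms in $2+1$ Lorentzian signature, and the adjunction $\star(\omega\wedge\alpha)=\contr{\alpha^\sharp}\star\omega$. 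All three do hold under the paper's convention \eqref{eq:Hodge.star.coords} (for instance $\star\,\dx{0}=-\dx{1}\wedge\dx{2}$ and $\star(\dx{1}\wedge\dx{2})=\dx{0}$ in an orthonormal Minkowski coframe, which yields both $\star(n^\flat)=\contr{n}\vol$ and the minus sign in $\star\star$), so your two-line evaluations of each term are sound; your plan to pin the signs down once in an adapted orthonormal coframe is exactly the right safeguard, since all three signs are convention- and signature-dependent, and because these are pointwise tensorial statements the check also covers the general case $\beta\neq 0$, under which the lemma is stated. What your route buys is brevity and structural transparency — in particular it isolates the origin of the minus sign in $-n^\flat B$ as the Lorentzian $\star\star=-\mathrm{Id}$ — at the cost of having to certify auxiliary Hodge identities against the paper's conventions; the paper's index computation is longer but entirely self-contained in its own definitions \eqref{eq:Hodge.star.coords}, \eqref{eq:espdet}, \eqref{eq:wedge2Kronecker}. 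Your closing observation that the substitution $\contr{E^\sharp}\contr{n}=\contr{\spE^\spsharp}\contr{n}$ via \eqref{eq:iEin} requires no spatiality of $E$ matches the paper's usage exactly.
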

\begin{proof}
  To establish the stated formula, we apply the Hodge star operator to each term of \eqref{eq:FEB} individually.
  For the term involving $E$, we have: 
  \begin{equation*}
    \begin{aligned}
      (\star(n^\flat \wedge E))_{\mu} 
      \overset{\eqref{eq:Hodge.star.coords},\eqref{eq:wedge2Kronecker}}{=}{}& \frac{\sqrt{-\det g}}{2} \epsilon^{\alpha\beta}{}_\mu \delta^{\alpha'\beta'}_{\alpha\beta} n_{\alpha'} E_{\beta'} \\
      ={}& \frac{\sqrt{-\det g}}{2} \epsilon_{\alpha\beta\mu} (n^\alpha E^\beta - n^\beta E^\alpha) \\
      ={}& \sqrt{-\det g} \epsilon_{\alpha\beta\mu} n^\alpha E^\beta \\
      ={}& \bigl(\contr{E^\sharp}\contr{n} \vol\bigr)_\mu
      \overset{\eqref{eq:iEin}}{=} \bigl(\contr{\spE^\spsharp}\contr{n} \vol\bigr)_\mu,
    \end{aligned}
  \end{equation*}
  while for the term involving $B$, we have:
  \begin{equation*}
    \begin{aligned}
      (\star B\ \contr{n} \vol)_\mu 
      ={}& \frac{\sqrt{-\det g}}{2} \epsilon^{\alpha\beta}{}_\mu B n^\lambda \sqrt{-\det g} \epsilon{}_{\lambda\alpha\beta} \\
      ={}& -\frac{\det g}{2} g_{\mu\mu'} \epsilon^{\alpha\beta\mu'} \epsilon_{\alpha\beta\lambda} n^\lambda B\\
      ={}& - \frac12 g_{\mu\mu'} \delta^{\alpha\beta\mu'}_{\alpha\beta\lambda} n^\lambda B \\
      ={}& - \frac12 2! g_{\mu\mu'} \delta^{\mu'}{}_{\lambda} n^\lambda B \\
      ={}& - n_\mu B .
    \end{aligned}
  \end{equation*}
Adding the above two expressions yields the desired formula.
\end{proof}

To remove some technicality in the following derivations, we now restrict our attention to foliations for which $\beta=0$. In the next lemma, we derive $2+1$ decompositions for $\DIFF F$ and $\DIFF\star F$, which will be used below to perform a $2+1$ decomposition of Maxwell's equations.
\begin{lemma}
  If $\beta = 0$, then 
  \begin{align}
    \DIFF F ={}& \dt \wedge \left[ \spdiff (N \spE) + \Lie_{\partial_t}
     (B\  \spvol) \right] , \label{eq:diffF} \\
    \DIFF \star F ={}& \spdiff\spstar \spE + \dt \wedge \left[ \Lie_{\partial_t} (\spstar \spE) - \spdiff (NB) \right]. \label{eq:diffstF}
  \end{align}
\end{lemma}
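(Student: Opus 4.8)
The plan is to reduce everything to the two explicit expressions already at hand, namely $F = n^\flat \wedge E + B\,\contr{n}\vol$ from \eqref{eq:FEB} and $\star F = \contr{\spE^\spsharp}\contr{n}\vol - n^\flat B$ from \eqref{eq:starFEB}, and then apply $\DIFF$ after specialising to $\beta=0$. Setting $\beta=0$ gives $n^\flat = -N\dt$ and, by \eqref{eq:invol}, $\contr{n}\vol = \spvol$. Writing $E = E_0\,\dt + \spE$ and using $\dt\wedge\dt = 0$, the first expression collapses to $F = B\,\spvol - \dt\wedge(N\spE)$. For the second, I would invoke the standard Riemannian identity $\contr{\alpha^\spsharp}\spvol = \spstar\alpha$ for a spatial $1$-form $\alpha$ to recognise $\contr{\spE^\spsharp}\spvol = \spstar\spE$, so that $\star F = \spstar\spE + \dt\wedge(NB)$. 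Both $F$ and $\star F$ are thus put in the canonical $2+1$ shape $b + \dt\wedge a$, with $b,a$ purely spatial (no $\dt$) forms carrying $t$-dependent coefficients.

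The key technical ingredient is a decomposition of the spacetime exterior derivative in adapted coordinates. In coordinates $(t,x^i)$ one has $\DIFF = \dt\wedge\partial_t + \spdiff$, where $\spdiff$ differentiates only in the spatial variables; applied to a form $b + \dt\wedge a$ with $b,a$ spatial this yields
\[
\DIFF(b + \dt\wedge a) = \spdiff b + \dt\wedge(\Lie_{\partial_t} b - \spdiff a),
\]
the sign coming from $\DIFF(\dt\wedge a) = -\dt\wedge\DIFF a$ together with $\dt\wedge\dt = 0$. Here I would carefully justify that, on a spatial form, the coefficientwise time derivative $\partial_t$ coincides with $\Lie_{\partial_t}$ (a short Cartan-formula computation using $\contr{\partial_t}b = 0$ and $\Lie_{\partial_t}x^i = 0$), which is exactly where the adapted-coordinate and $\beta=0$ hypotheses enter.

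With this identity in place, the two formulas follow by substitution. For $F$, take $b = B\,\spvol$ and $a = -N\spE$; since $B\,\spvol$ is a top-degree form on the $2$-dimensional spatial slice, $\spdiff(B\,\spvol)=0$, leaving $\DIFF F = \dt\wedge(\spdiff(N\spE) + \Lie_{\partial_t}(B\,\spvol))$, which is \eqref{eq:diffF}. For $\star F$, take $b = \spstar\spE$ and $a = NB$, giving directly $\DIFF\star F = \spdiff\spstar\spE + \dt\wedge(\Lie_{\partial_t}(\spstar\spE) - \spdiff(NB))$, which is \eqref{eq:diffstF}. I expect the main obstacle to be bookkeeping rather than conceptual: getting the sign in the exterior-derivative splitting correct and rigorously identifying $\partial_t$ with $\Lie_{\partial_t}$ on spatial forms; the rest is direct substitution, and the only other fact needed, $\contr{\alpha^\spsharp}\spvol = \spstar\alpha$, is a standard pointwise identity.
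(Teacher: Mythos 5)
Your proposal is correct and follows essentially the same route as the paper: specialise \eqref{eq:FEB} and \eqref{eq:starFEB} to $\beta=0$ (so $n^\flat=-N\dt$ and $\contr{n}\vol=\spvol$), identify $\contr{\spE^\spsharp}\spvol=\spstar\spE$, and apply $\DIFF$ while splitting into spatial and temporal parts, using that $\partial_t$ coincides with $\Lie_{\partial_t}$ on spatial forms. The only difference is organisational: you package the last step as the general identity $\DIFF(b+\dt\wedge a)=\spdiff b+\dt\wedge\left(\Lie_{\partial_t}b-\spdiff a\right)$, whereas the paper differentiates each term directly and proves $\contr{\spE^\spsharp}\spvol=\spstar\spE$ by a short index computation rather than citing it.
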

\begin{proof}
  We infer from \eqref{eq:invol} with $\beta = 0$ that 
  $\contr{n} \vol = \spvol$.
  Replacing $n^\flat$ by its definition \eqref{eq:defn} in \eqref{eq:FEB} gives
  \begin{equation} \label{eq:diffFcoord}
    F = - N \dt \wedge E + B\ \spvol = - N \dt \wedge \spE + B\ \spvol,
  \end{equation}
  where in deriving the second equality, we used the fact that $E = \spE + E_0\dt$.
  Next, we apply the exterior derivative to each term in \eqref{eq:diffFcoord} to obtain:
  \begin{equation*}
    \DIFF (- \dt \wedge N \spE) = \dt \wedge \DIFF (N \spE) = \dt \wedge \spdiff (N \spE),
  \end{equation*}
  \begin{equation*}
    \DIFF (B \ \spvol) = 
    \partial_t B \ \dt \wedge \spvol + \cancel{\spdiff B \wedge \spvol} + B \ \dt \wedge \partial_t \spvol 
    + \cancel{B\ \spdiff \spvol}.
  \end{equation*}
  Since $\partial_t (B\spvol)=\Lie_{\partial_t}(B\spvol)$, 
  \eqref{eq:diffF} follows from adding the above two expressions.
  
  Noting that
  \begin{equation*}
    \begin{aligned}
      (\contr{\spE^\spsharp} \spvol)_{j}
      ={}& \sqrt{\det \gamma} \spE^i \epsilon_{ij}
      ={} \sqrt{\det \gamma} \epsilon^{i}{}_j \spE_i
      ={} (\spstar \spE)_j,
    \end{aligned}
  \end{equation*}
  where the indices are raised with the spatial metric $\gamma$, we can express \eqref{eq:starFEB} as $\star F = \spstar \spE + BN \dt$. Formula \eqref{eq:diffstF} is then readily obtained by applying the exterior derivative to this expression and employing the identity $\DIFF\spstar\spE =  \spdiff \spstar\spE+\dt\wedge \Lie_{\partial_t}(\spstar\spE)$. 
\end{proof}

The last $2+1$ decomposition we need is for the electric $3$-current $\ul{j}$. We recall that the electric charge density is $\rho := - \ul{j}(n)$
and the electric current density is $J := N(\ul{j} - n^\flat \rho)$. 
\begin{lemma}
  The electric $3$-current appearing in \eqref{eq:Maxwell.2} is decomposed as follows
  \begin{equation} \label{eq:jdec}
    \star \ul{j} = \rho\ \spvol - \dt \wedge \spstar \widetilde{J}.
  \end{equation}
\end{lemma}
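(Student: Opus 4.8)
The plan is to express $\star\ul{j}$ as an interior product of the spacetime volume form and then to separate the parts of $\ul{j}^\sharp$ that are normal and tangential to the time slices. The key tool is the pointwise identity $\star\alpha=\contr{\alpha^\sharp}\vol$, valid for every $1$-form $\alpha$ on the $3$-manifold: reading \eqref{eq:Hodge.star.coords} and \eqref{eq:volcoord} in coordinates, both sides have components $\sqrt{-\det g}\,\alpha^\lambda\epsilon_{\lambda\mu\nu}$, so they agree (this is the spacetime analogue of the spatial relation $\contr{\widetilde V^{\spsharp}}\spvol=\spstar\widetilde V$ checked in the proof of \eqref{eq:starFEB}). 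Applying it to $\alpha=\ul{j}$ gives $\star\ul{j}=\contr{\ul{j}^\sharp}\vol$.

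Next I would decompose $\ul{j}^\sharp$. From the definitions $\rho=-\ul{j}(n)$ and $J=N(\ul{j}-\rho\,n^\flat)$ we have $\ul{j}=N^{-1}J+\rho\,n^\flat$, and raising the index yields $\ul{j}^\sharp=N^{-1}J^\sharp+\rho\,n$ since $(n^\flat)^\sharp=n$. Moreover $J$ is tangential to the slices: using $n^\flat(n)=g(n,n)=-1$ (as $n$ is a unit normal) together with $\ul{j}(n)=-\rho$, one computes $J(n)=N\big(\ul{j}(n)-\rho\,n^\flat(n)\big)=N(-\rho+\rho)=0$, so \eqref{eq:ndotp} applied to $J$ gives $J^\sharp=\widetilde J^{\spsharp}$. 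Contracting $\ul{j}^\sharp=N^{-1}\widetilde J^{\spsharp}+\rho\,n$ with $\vol$ and invoking \eqref{eq:invol} with $\beta=0$ to write $\contr{n}\vol=\spvol$, I obtain
\[
\star\ul{j}=N^{-1}\contr{\widetilde J^{\spsharp}}\vol+\rho\,\spvol.
\]

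It then remains to evaluate $\contr{\widetilde J^{\spsharp}}\vol$. With $\beta=0$ the volume form factorises as $\vol=N\,\dt\wedge\spvol$, so for the spatial vector field $W:=\widetilde J^{\spsharp}$ the Leibniz rule for the interior product gives
\[
\contr{W}\vol=N\big((\contr{W}\dt)\,\spvol-\dt\wedge\contr{W}\spvol\big)=-N\,\dt\wedge\spstar\widetilde J,
\]
because $\contr{W}\dt=\dt(W)=0$ for a spatial $W$ and $\contr{W}\spvol=\spstar\widetilde J$ by the spatial identity quoted above. Dividing by $N$ and substituting into the previous display produces exactly \eqref{eq:jdec}. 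The main obstacle, modest as it is, lies in the sign bookkeeping of the Leibniz expansion together with the vanishing $\contr{W}\dt=0$, which is what collapses the normal contribution to a pure $\rho\,\spvol$ term; this vanishing is guaranteed precisely because $\beta=0$ forces $n^\flat$ to be proportional to $\dt$.
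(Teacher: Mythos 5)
Your proof is correct, and it takes a route that differs in mechanics from the paper's. The paper works in index notation throughout: it expands $(\star\ul{j})_{\mu\nu}=\sqrt{-\det g}\,\epsilon^{\alpha}{}_{\mu\nu}\ul{j}_\alpha$ with generalized Kronecker deltas, extracts the $\rho\,\spvol$ term from the time component $\ul{j}^0$, and only afterwards decomposes $N\ul{j}^\sharp=J^\sharp+N\rho\, n$ inside the remaining term, killing the normal part via $\contr{n}\contr{n}\vol=0$. You instead make explicit the invariant identity $\star\alpha=\contr{\alpha^\sharp}\vol$ (which the paper uses only implicitly in its first coordinate line), split $\ul{j}^\sharp=N^{-1}\widetilde J^{\spsharp}+\rho\,n$ \emph{before} contracting, and then evaluate the two pieces structurally: the normal piece via \eqref{eq:invol} with $\beta=0$, and the tangential piece via the factorization $\vol=N\,\dt\wedge\spvol$ and the Leibniz rule for the interior product. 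Both arguments ultimately rest on the same three facts -- $J(n)=0$ hence $J^\sharp=\widetilde J^{\spsharp}$ by \eqref{eq:ndotp}, $\contr{n}\vol=\spvol$, and $\contr{\widetilde J^{\spsharp}}\spvol=\spstar\widetilde J$ -- so the mathematical content is close, but your version avoids the $\delta^{\mu_1\dots}_{\nu_1\dots}$ bookkeeping and is arguably cleaner; the paper's version stays consistent with the index conventions used in the rest of its appendix. One small correction to your closing remark: the vanishing $\contr{W}\dt=\dt(W)=0$ has nothing to do with $\beta=0$; it holds simply because $W=\widetilde J^{\spsharp}$ is tangent to the $t=\mathrm{const}$ slices (and $n^\flat=-N\dt$ holds by definition \eqref{eq:defn} for any shift). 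The place where $\beta=0$ genuinely enters your argument is the identity $\contr{n}\vol=\spvol$ from \eqref{eq:invol}, which you do invoke correctly; this misattribution does not affect the validity of the computation.
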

\begin{proof}
  From the formula \eqref{eq:Hodge.star.coords} for the Hodge star operator, we have
  \begin{equation*}
      (\star\ul{j})_{\mu\nu} 
      = \sqrt{-\det g}\ \epsilon^\alpha{}_{\mu\nu} \ul{j}_\alpha \\
      = \sqrt{-\det g}\ \left( \ul{j}^0 \delta^{12}_{\mu\nu} - (\delta^0_\mu \delta^{12}_{\alpha\nu} - \delta^0_\nu\delta^{12}_{\alpha\mu}) \ul{j}^\alpha \right),
  \end{equation*}
  while we infer from \eqref{eq:ndotp} and the definition of $\rho$, recalling that we have set $\beta=0$ for simplicity, that 
  \[
    \sqrt{-\det g}\ \ul{j}^0 = \frac{\sqrt{-\det g}}{N} \rho = \sqrt{\det \gamma}\ \rho,
  \]
  and thus $\sqrt{-\det g}\ \ul{j}^0 \delta^{12}_{\mu\nu} = (\rho \spvol)_{\mu\nu}$.
  Moreover, we have 
  \[
    \sqrt{- \det g}\ \delta^{12}_{\alpha\mu} \ul{j}^\alpha 
    = N \ul{j}^\alpha \sqrt{\det \gamma}\ \epsilon_{\alpha\mu}
    = (\contr{N \ul{j}^\sharp} \spvol)_\mu,
  \]
  and hence 
  \[
    \sqrt{- \det g}\ (\delta^0_\mu \delta^{12}_{\alpha\nu} - \delta^0_\nu\delta^{12}_{\alpha\mu}) \ul{j}^\alpha
    = (\dt \wedge \contr{N\ul{j}^\sharp} \spvol)_{\mu\nu}.
  \]
  We also have
  \[
    \dt \wedge \contr{N\ul{j}^\sharp } \spvol 
    ={} \dt\wedge \contr{J^\sharp} \spvol + \dt \wedge \contr{N\rho n}\spvol 
    ={} \dt\wedge \contr{J^\sharp} \spvol,
  \]
  where the last equality holds by \eqref{eq:invol} and the assumption $\beta=0$, which show that $\contr{N\rho n^\flat}\spvol=N\rho \contr{n} \contr{n}\vol=0$.
  We also observe from \eqref{eq:ndotp} with $V = J$, noticing that
  $J(n) = N(\ul{j}(n) - n^2 \rho) = N(\ul{j}(n) - \ul{j}(n)) = 0$, that $J^\sharp = \widetilde{J}^\spsharp$,
  and consequently, that $\contr{J^\sharp} \spvol = \spstar \widetilde{J}$. Putting everything together, we deduce \eqref{eq:jdec}.
\end{proof}

It is then easy to see that the 2+1 formulation of the Maxwell equations \eqref{eq:Maxwell} is \eqref{eq:Maxwell.general}: it suffices to plug
\eqref{eq:diffF}, \eqref{eq:diffstF} and \eqref{eq:jdec} into \eqref{eq:Maxwell}, and to notice that $\spstar^{-1}\spdiff\spstar \spE = - \spdelta \spE$, since $\spE$ is a $1$-form, 
  and that $\spstar^{-1}\spdiff(NB) = \spstar^{-1}\spdiff(\spstar \spvol NB) = \spdelta (NB\spvol)$.

\section{Exact solutions}\label{sec:exact.solutions}

We consider two different solutions for the numerical experiment on the sphere: 
one that is only $C^0$ and piecewise smooth, while the other is smooth. 

\subsection{$C^0$ solution on the sphere}\label{sec:exact.solutions.C0}

For the purpose of describing the continuous solution, we introduce stereographic coordinates by considering a unit sphere with an atlas consisting of the north and south stereographic projections, and endowed with the induced metric from $\Real^3$.
In each map, this metric is given by 
\[
  \gamma_{ij} = \lambda \begin{pmatrix} 1 & 0 \\ 0 & 1 \end{pmatrix}, 
    \quad \lambda := \frac{4}{(1 + X^2 + Y^2)^2},
\]
where $X,Y$ are the coordinates in the map.
Writing $\widetilde{E} = E_X \DIFF X +  E_Y \DIFF Y$, we have 
\[
  \spdiff \widetilde{E} = \frac{1}{\lambda} (\partial_X E_Y - \partial_Y E_X) \spvol, \quad
  -\spdelta \widetilde{E} = \frac{1}{\lambda} (\partial_X E_X + \partial_Y E_Y), \quad
  \spdelta (B\spvol) = \partial_Y B \DIFF X - \partial_X B \DIFF Y .
\]

We infer the following solution featuring a non-zero source.
Although it is continuous, it is not $C^1$.
Its expression in the north map is given by
\begin{equation}
\begin{aligned}
  B' ={}&
  \big[ (X^2 + Y^2 - 1)\cos(t) + X^2 + Y^2 + 1 - 2 X \sin(t)\big] \DIFF X \wedge \DIFF Y, \\
  E ={}&
  \frac{Y}{4} \big[ (2 - X^2 - Y^2)\sin(t) - 2 X \cos(t) \big] \DIFF X\\
  & +\frac{X}{4} \big[ (X^2 + Y^2 - 2)\sin(t) + (3 X^2 + Y^2 - 3) \cos(t) \big] \DIFF Y , \\
  \rho ={}& 
0, \\
  J ={}&
  \Big[Y \left( \frac{3}{2} (X^2 + Y^2)^2 (1 + \cos(t)) + (X^2 + Y^2)\left(3 + \frac{5}{4} \cos(t)\right) + \frac{1}{2} - \cos(t) \right) \\
  &\quad - X Y \left(2 X^2 + 2 Y^2 + \frac{5}{2}\right) \sin(t) \Big] \DIFF X\\
  &+\Big[-X \left( \frac{3}{2} (X^2 + Y^2)^2 (1 + \cos(t)) + (X^2 + Y^2)\left(3 + \frac{5}{4} \cos(t)\right) + \frac12 - \cos(t) \right) \\
  &\quad + \frac14 (10 X^4 + 12 X^2Y^2 + 15 X^2 + 2 Y^4 + 5 Y^2 - 1 ) \sin(t)\Big] \DIFF Y
\end{aligned}
\label{eq:sol.sp.r1}
\end{equation}
and in the south map by
\begin{equation}
\begin{aligned}
  B' ={}&
  \big[ (X^2 + Y^2 - 1)\cos(t) - X^2 - Y^2 - 1 + 2 X \sin(t)\big] \DIFF X \wedge \DIFF Y, \\
  E ={}&
  \frac{Y}{4} \left[ (2 - X^2 - Y^2)\sin(t) + 2 X \cos(t) \right] \DIFF X\\
  & +\frac{X}{4} \left[ (X^2 + Y^2 - 2)\sin(t) - (3 X^2 + Y^2 - 3) \cos(t) \right] \DIFF Y , \\
  \rho ={}& 0,\\
  J ={}&
  \Big[-Y \left( \frac32 (X^2 + Y^2)^2 (1 - \cos(t)) + (X^2 + Y^2) \left(3 - \frac54 \cos(t)\right) + \frac12 + \cos(t) \right) \\
  &\quad + X Y \left(2 X^2 + 2 Y^2 + \frac52\right) \sin(t) \Big] \DIFF X\\
  &+\Big[X \left( \frac32 (X^2 + Y^2)^2 (1 - \cos(t)) + (X^2 + Y^2)\left(3 - \frac54 \cos(t)\right) + \frac12+ \cos(t) \right) \\
  &\quad - \frac14 (10 X^4 + 12 X^2Y^2 + 15 X^2 + 2 Y^4 + 5 Y^2 - 1 ) \sin(t)\Big] \DIFF Y.
\end{aligned}
\label{eq:sol.sp.r2}
\end{equation}

\subsection{Smooth solution}\label{sec:exact.solutions.smooth}

Introducing spherical coordinates $(x^1,x^2)=(\phi,\theta)$ on the unit sphere $S^2$ via
\begin{equation*}
(x,y,z)= \bigl(\cos(\theta)\sin(\phi),\sin(\theta)\sin(\phi),\cos(\phi)\bigr), \quad 0<\theta<2\pi,\; 0<\phi<\pi,
\end{equation*}
the induced metric from $\mathbb{R}^3$ on $S^2$ can be expressed as
\begin{equation}\label{S2-metric}
\gamma = \DIFF \phi \otimes \DIFF \phi + \sin^2(\phi)\,\DIFF \theta \otimes \DIFF \theta,
\end{equation}
or equivalently in matrix form
\begin{equation*}
(\gamma_{ij})=\begin{pmatrix} 1 & 0 \\
0 & \sin^2(\phi)\end{pmatrix} \quad \text{and}\quad (\gamma^{ij})=\begin{pmatrix} 1 & 0 \\
0 & \displaystyle \frac{1}{\sin^2(\phi)}\end{pmatrix}.
\end{equation*}

We now consider the vacuum Maxwell equations, that is, \eqref{eq:Maxwell.Stationary} with $\rho=0$, $\widetilde{J}=0$ and $c=1$:
\begin{subequations}
\begin{align}
  \spdiff \spE &= - \partial_t B', \label{eq:vac.Maxwell.B}\\
  \spdelta \spE &=0,  \label{eq:vac.Maxwell.constraint}\\
  \spdelta B' &= \partial_t \spE. \label{eq:vac.Maxwell.E}
\end{align}
\end{subequations}
Noting from \eqref{S2-metric} that the metric $\gamma$ is invariant under the transformation $\theta \longmapsto \theta + \text{const}$,
we look for solutions of these equations that are also invariant under this transformation by imposing the following ansatz:
\begin{equation}\label{EB-ansatz}
\begin{aligned}
\spE  &= a(t,\phi) \, \DIFF \theta,\\
B' &= b(t,\phi)\spvol=  b(t,\phi)\sin(\phi)\, \DIFF \phi \wedge \DIFF \theta.
\end{aligned}
\end{equation}
To see that this ansatz leads to a consistent reduction of the vacuum Maxwell equation, we first observe that the Gauss constraint \eqref{eq:vac.Maxwell.constraint} is automatically satisfied:
\begin{equation*}
\spdelta \spE = -\spstar \spdiff ( a\spstar \, \DIFF \theta)=\spstar \spdiff\biggl(\frac{a(t,\phi)}{\sin(\phi)}\,\DIFF\phi\biggr)=\spstar \biggl(\partial_\phi\biggl(\frac{a(t,\phi)}{\sin(\phi)}\biggr)\, \DIFF \phi \wedge \DIFF \phi \biggr) = 0. 
\end{equation*}
Next, we calculate:
\begin{gather*}
\spdiff \spE = \partial_\phi a \, \DIFF \phi \wedge \DIFF \theta
\intertext{and}
\spdelta B' = - \spstar \spdiff\bigl( b\spstar\spvol\bigr)=- \spstar \spdiff b
=- \partial_\phi b\,\spstar \DIFF\phi=-\sin(\phi)\partial_\phi b \DIFF\theta.
\end{gather*}
From these calculations, it is clear that the anstaz \eqref{EB-ansatz} will satisfy the vacuum Maxwell equations \eqref{eq:vac.Maxwell.B}--\eqref{eq:vac.Maxwell.E} provided $a$ and $b$ satisfy 
\begin{align}
\partial_t a & = -\sin(\phi)\partial_\phi b, \label{a-ev-A}\\
\partial_t b &= -\frac{1}{\sin(\phi)}\partial_\phi a. \label{b-ev-A} 
\end{align}
Together, these equations imply that $b$ satisfies the wave equation
\begin{equation} \label{b-ev-B}
\partial_t^2 b = \frac{1}{\sin(\phi)}\partial_\phi\bigl(\sin(\phi)\partial_\phi b\bigr).
\end{equation}
Treating this equation as our primary equation to solve, it is then not difficult to verify that, given a solution of \eqref{b-ev-B}, we can recover the function $a$ by integrating \eqref{a-ev-A} in time to get
\begin{equation} \label{a-ev-B}
a(t,\phi) = - \sin(\phi)\partial_\phi \int_0^t b(s,\phi)\, ds + a_0(\phi),
\end{equation}
where $a_0(\phi)$ is for now an arbitrary function of $\phi$. 
On the other hand, integrating the wave equation \eqref{b-ev-B} in time gives
\begin{align*}
\partial_t b(t,\phi) &= \frac{1}{\sin(\phi)}\partial_\phi\biggl(\sin(\phi)\partial_\phi \int_0^t b(s,\phi)\, ds \biggr)+\partial_t b(0,\phi) \\
&= - \frac{1}{\sin(\phi)} \partial_\phi a(t,\phi) + \frac{1}{\sin(\phi)}a_0'(\phi) + \partial_t b(0,\phi). 
\end{align*}
From this, we conclude for any solution $b(t,\phi)$ of the wave equation \eqref{b-ev-B}, the pair $\{a(t,\phi),b(t,\phi)\}$, where $a(t,\phi)$ is determined from $b(t,\phi)$ by the formula \eqref{a-ev-B},
will solve \eqref{a-ev-A}--\eqref{b-ev-A}, and hence determine a solution of the vacuum Maxwell equations via \eqref{EB-ansatz},  provided the free function $a_0(\phi)$ is chosen to satisfy
\begin{equation*} 
a_0'(\phi) =-\sin(\phi)\partial_t b(0,\phi).
\end{equation*}

Now, to find exact solutions to the wave equation \eqref{b-ev-B}, we recall the Legendre polynomials $P_\ell(x)$, $\ell\in \mathbb{N}_0$, which solve the differential equation
\begin{equation*}
\frac{d\;}{dx}\biggl((1-x^2)\frac{d\;}{dx}\biggr)P_\ell(x) = -\ell (\ell+1)P_{\ell}(x), \quad x\in [-1,1].
\end{equation*}
We also recall that these polynomial can be computed directly from Rodrigues' formula:
\begin{equation*}
P_\ell(x) = \frac{1}{2^\ell \ell!}\frac{d^\ell\;}{dx^\ell}(x^2-1)^\ell.
\end{equation*}
It is well known that the functions $P_\ell(\cos(\phi))$, which are the  spherical harmonics $Y^0_\ell(\phi,\theta)$ on $S^2$, satisfy the eigenvalue problem
\begin{equation*}
\frac{1}{\sin(\phi)}\partial_\phi\Bigl(\sin(\phi)\partial_\phi P_\ell(\cos(\phi))\Bigr)=-\ell(\ell+1)P_\ell(\cos(\phi)).
\end{equation*}
With the help of this identity, it is then straightforward to verify, for any $c_1,c_2\in \mathbb{R}$ and $\ell\in \mathbb{N}_0$, that the functions
\begin{equation}
b(t,\phi)=\Bigl(c_1\cos\bigl(\sqrt{\ell(\ell+1)}t\bigr)+c_2\sin\bigl(\sqrt{\ell(\ell+1)}t\bigr)\Bigr)P_\ell(\cos(\phi))
\label{eq:b}
\end{equation}
satisfy the wave equation \eqref{b-ev-B}.

The smooth solution considered in Section \ref{sec:results.smooth} is \eqref{EB-ansatz} for $a$ and $b$ respectively given by \eqref{a-ev-B} and \eqref{eq:b} for $\ell=1$.
Expressed in the stereographic coordinates, 
this smooth solution takes the same expression in both the north and south map:
\begin{equation}
\begin{aligned}
  B' ={}&
  \cos(\sqrt{2}t) \frac{1-X^2-Y^2}{1+X^2+Y^2} \frac{4}{\left( 1+X^2+Y^2 \right)^2} \DIFF X \wedge \DIFF Y, \\
  E ={}&
  -Y \biggl( \frac{\sin(\sqrt{2}t)}{\sqrt{2}} \frac{4}{\left( 1+X^2+Y^2 \right)^2}  \biggr) \DIFF X
   +X \biggl( \frac{\sin(\sqrt{2}t)}{\sqrt{2}} \frac{4}{\left( 1+X^2+Y^2 \right)^2} \biggr) \DIFF Y , \\
  \rho ={}& 
0, \\
  J ={}&
  0.
\end{aligned}
\label{eq:sol.sp.s}
\end{equation}

\section*{Acknowledgements}
Funded by the European Union (ERC, NEMESIS, No. 101115663).
Views and opinions expressed are however those of the author(s) only and do not necessarily reflect those of the European Union or the European Research Council Executive Agency. Neither the European Union nor the granting authority can be held responsible for them.

\printbibliography

\end{document}